\theoremstyle{plain}
\newtheorem{teo}{Theorem}[section]
\newtheorem{lem}[teo]{Lemma}
\newtheorem{coro}[teo]{Corollary}
\newtheorem{prop}[teo]{Proposition}
\theoremstyle{definition}
\newtheorem{defi}[teo]{Definition}
\newtheorem{rem}[teo]{Remark}
\newtheorem*{assm}{Assumptions}
\numberwithin{equation}{section}
\newcommand{\NN}{\mathbb{N}}
\newcommand{\ZZ}{\mathbb{Z}}
\newcommand{\RR}{\mathbb{R}}
\newcommand{\CC}{\mathbb{C}}
\newcommand{\Rpos}{{\RR^+}}
\newcommand{\Rnon}{{\RR^+_0}}
\newcommand{\di}{\,\mathrm{d}}    
\newcommand{\ndi}{\mathrm{d}}    
\newcommand{\vfG}{X}               
\newcommand{\vfA}{\breve{X}_0}     
\newcommand{\vfN}{\breve{X}}       
\newcommand{\step}{S}
\newcommand{\hardy}{\mathfrak{h}^1}   
\newcommand{\bmo}{\mathfrak{bmo}}     
\newcommand{\AH}{\mathbf{H}}        
\newcommand{\AcH}{\mathscr{H}}      
\newcommand{\AC}{\mathbf{C}}        
\newcommand{\AW}{\mathbf{W}}        
\newcommand{\DD}{\mathrm{D}}      
\newcommand{\Dist}{\mathcal{D}}   
\newcommand{\lie}{\mathfrak}
\DeclareMathOperator{\tr}{tr}
\DeclareMathOperator{\Aut}{Aut}
\DeclareMathOperator{\sgn}{sgn}
\DeclareMathOperator{\Span}{span}
\DeclareMathOperator{\supp}{supp}
\newcommand{\dist}{\varrho}        
\newcommand{\tc}{\,:\,}
\newcommand{\loc}{\mathrm{loc}}
\newcommand{\sloc}{\mathrm{sloc}}
\newcommand{\Balls}{\mathcal{B}}   
\newcommand{\thresh}{\varsigma}    
\newcommand{\lebexp}{\wp}          
\newcommand{\plnexp}{\varkappa}    
\newcommand{\acknowledgments}{\section*{Acknowledgments} This work was partially supported by the EPSRC Grant ``Sub-Elliptic Harmonic Analysis'' (EP/P002447/1), the Progetto GNAMPA 2017 ``Analisi armonica e teoria spettrale di Laplaciani", the Progetto GNAMPA 2016 ``Calcolo funzionale per operatori subellittici su variet\`a'',  
and the Progetto PRIN 2015 ``Variet\`a reali e complesse: geometria, topologia e analisi armonica''. Martini and Vallarino are members of the Gruppo Nazionale per l'Analisi Matematica, la Probabilit\`a e le loro Applicazioni (GNAMPA) of the Istituto Nazionale di Alta Matematica (INdAM). }
\begin{document}

\title
[A multiplier theorem for sub-Laplacians with drift]
{A multiplier theorem for sub-Laplacians with drift on Lie groups}

\author[A. Martini]{Alessio Martini}
\address[A. Martini]{School of Mathematics \\ University of Birmingham \\
Edgbaston \\ Birmingham \\ B15 2TT \\ United Kingdom}
\email{a.martini@bham.ac.uk}

\author[A. Ottazzi]{Alessandro Ottazzi}
\address[A. Ottazzi]{School of Mathematics and Statistics \\ University of New South Wales \\ UNSW Sydney NSW 2052 \\ Australia}
\email{a.ottazzi@unsw.edu.au}

\author[M. Vallarino]{Maria Vallarino}
\address[M. Vallarino]{Dipartimento di Scienze Matematiche ``Giuseppe Luigi Lagrange''
\\ Politecnico di Torino\\
Corso Duca degli Abruzzi 24\\ 10129 Torino\\ Italy}
\email{maria.vallarino@polito.it}

\subjclass[2010]{22E30, 42B15, 42B20, 43A22}
\keywords{spectral multiplier, sub-Laplacian, Lie group, drift, Hardy space}

\begin{abstract}
We prove a general multiplier theorem for 
symmetric left-invariant sub-Laplacians with drift on non-compact Lie groups.
This considerably improves and extends a result by Hebisch, Mauceri, and Meda.
Applications include groups of polynomial growth and solvable extensions of stratified groups.
\end{abstract}

\maketitle

\section{Introduction}

Let $G$ be a connected Lie group.
 Let $X_1,\dots, X_{\nu}$ be
 left-invariant vector fields on $G$ that satisfy H\"ormander's condition
and $\dist$ be the Carnot--Carath\'eodory distance associated with $X_1,\dots, X_{\nu}$.
Then, as it is well known, the sub-Laplacian $\Delta=-\sum_{j=1}^{\nu}X_j^2$ is hypoelliptic and essentially self-adjoint on $L^2(\mu)$, where $\mu$ is the right Haar measure of $G$.  

Let $\chi$ be a nontrivial positive character of $G$; note that the existence of $\chi$ forces $G$ to be non-compact. Define the vector field
\begin{equation}\label{eq:drift}
X=\sum_{j=1}^{\nu}\di\chi_e(X_j)X_j,
\end{equation}
where $e$ denotes the identity of $G$. Then the sub-Laplacian with drift $\Delta_X = \Delta - X$ is essentially selfadjont on $L^2(\mu_X)$, where $\di\mu_X=\chi\di\mu$, and its $L^2(\mu_X)$-spectrum is contained
 in $[b_X^2,\infty)$, where $b_X = (\sum_j |\ndi\chi_e(X_j)|^2/4)^{1/2}$. The above form of the drift $X$ is not an arbitrary choice: indeed, as shown in \cite{HMM}, any left-invariant vector field $X$ on $G$ such that $\Delta-X$ is symmetric on $L^2(\tilde\mu)$ for some positive measure $\tilde\mu$ on $G$ has the form \eqref{eq:drift} for some positive character $\chi$ of $G$.

Such sub-Laplacians with drift have been extensively studied in the literature.
When $G$ is a nonunimodular Lie group with a left-invariant sub-Riemannian structure, the ``intrinsic hypoelliptic Laplacian'' considered in \cite{ABGR} is a sub-Laplacian with drift as defined above; this includes, in particular, the Laplace--Beltrami operator associated with any left-invariant Riemannian metric on $G$.
Heat kernel estimates for sub-Laplacians with drift were studied on various Lie groups in \cite{A2, D, varopoulos_analysis_1992}. Lohou\'e and Mustapha \cite{LM} studied the $L^p$ boundedness of the Riesz transforms of any order associated with sub-Laplacians with drift on every amenable Lie group; endpoint estimates for the Riesz transforms of any order associated with the Laplacian with drift were proved in \cite{LS,LSW} in the case where $G=\RR^n$.

In this paper we are interested in $L^p$ spectral multipliers of $\Delta_X$, i.e., the bounded Borel functions $M : [b_X^2,\infty) \to \CC$ such that the operator $M(\Delta_X)$, initially defined on $L^2(\mu_X)$, extends to a bounded operator on $L^p(\mu_X)$. Hebisch, Mauceri and Meda \cite{HMM} showed that, if $G$ is amenable and $p \in [1,\infty] \setminus \{2\}$, then every $L^p$ spectral multiplier of $\Delta_X$ extends to a bounded holomorphic function on a parabolic region $P_{X,p}$ in the complex plane depending on $p$ and on the drift $X$; namely,
\begin{equation}\label{eq:parabola}
P_{X, p}=\left\{x+iy\in\CC \tc x > \frac{y^2}{4b_X^2\,\sin ^2\phi_p^*}+ b_X^2\,\cos ^2\phi_p^* \right\} ,
\end{equation}
where $\phi_p^*=\arcsin |2/p-1|$. In the case where $G$ has polynomial growth and $1 < p < \infty$, they also found a sufficient condition, stated in \cite[Theorem 5.2]{HMM}, for a holomorphic function $M$ on $P_{X,p}$ to be an $L^p$ spectral multiplier of $\Delta_X$. This condition is more conveniently expressed by means of the change of variable
\begin{equation}\label{eq:change_var}
M_X(z) = M(b_X^2+z^2),
\end{equation}
which defines a holomorphic function $M_X$ on the strip
$\Sigma_{W} = \{ x+iy \in \CC \tc |y| < W \}$
of half-width $W = |2/p-1| \, b_X$; 
with this notation, the sufficient condition in \cite{HMM} takes the form
\begin{equation}\label{eq:nhmh}
\max_{k \in \{0,\dots,N\}} \sup_{z \in \overline{\Sigma_{W}}} (1+|z|)^k |M_X^{(k)}(z)| < \infty,
\end{equation}
where the number $N$ of derivatives to be controlled depends on the group $G$ and the sub-Laplacian $\Delta$, but is independent of $p$.

The main result of this paper improves and complements \cite[Theorem 5.2]{HMM} in several ways. First of all, we refine the sufficient condition for $L^p$-boundedness, by reducing the number $N$ of derivatives and making it dependent on $p$. For instance, in the case where $G = \RR^n$ and $\Delta$ is the standard Laplacian, \cite[Theorem 5.2]{HMM} requires $N > (n+4)/2$, while our result requires $N > |1/p-1/2|(n+1)$. Actually, in the sharpest formulation of our result, the pointwise differential condition \eqref{eq:nhmh} of integer order $N$ is replaced by an $L^q$ condition of fractional order.

Secondly, we obtain an endpoint result in the cases $p=1$ and $p=\infty$, in terms of suitable Hardy and BMO spaces adapted to the measured metric space $(G,\dist,\mu_X)$. In this we exploit the general theory of Hardy and BMO spaces of Goldberg type on measured metric spaces satisfying mild geometric conditions that has recently been developed by Meda and Volpi \cite{MV}.

Thirdly, we extend the range of applicability beyond the class of Lie groups of polynomial growth: namely, our result applies also to distinguished sub-Laplacians on some Lie groups of exponential growth, such as the rank-one solvable extensions of stratified groups considered in \cite{HS,MOV}.

In order to present our multiplier theorem in full generality, we state it as a conditional result (see Theorem \ref{teo:main} and Corollary \ref{cor:main} below). In other words, we prove a spectral multiplier theorem for a sub-Laplacian with drift $\Delta_X$ on an arbitrary group $G$, provided that certain estimates (stated as Assumptions \ref{en:ass_MH_diff}, \ref{en:ass_MH_annulus} and \ref{en:ass_fps_plancherel} in Section \ref{s:main} below) hold. These assumptions are technical in nature, however it turns out that they can be verified in many cases, provided that the sub-Laplacian without drift $\Delta$ has a differentiable $L^p(\mu)$ functional calculus and a multiplier theorem of Mihlin--H\"ormander type for $\Delta$ holds.

Next we show how the general multiplier result can be applied in particular cases. Theorem \ref{teo:main-pol} deals with the case where $G$ has polynomial growth (in this case a multiplier theorem for $\Delta$ was obtained in \cite{A,cowling_spectral_2001,DOS}). Theorem \ref{teo:main_na}, instead, considers the case where $\Delta$ is a distinguished sub-Laplacian on a rank-one solvable extension of a stratified group (a multiplier theorem for such $\Delta$ was obtained in \cite{HS,MOV}). These are but a few examples of applications of the general conditional result, which could also be used, for instance, to refine the multiplier theorem for a complete Laplacian with drift on Damek--Ricci spaces obtained in \cite{OV} (see also \cite{V2} for the case of the Laplacian without drift).

Note that the parabolic region $P_{X, p}$ defined in \eqref{eq:parabola} is tangent to the sector of angle $\phi_p^*$ and vertex $0$ in the complex plane, and Carbonaro and Dragi\v cevi\' c proved  \cite{CD} that every generator of a symmetric contraction semigroup has holomorphic functional calculus on that sector. Therefore our result, when applicable, provides a more precise
 spectral multiplier theorem for $\Delta_X$.

In the case where $\Delta_X$ is the Laplace--Beltrami operator associated with a left-invariant Riemannian metric on $G$, an $L^p$ spectral multiplier theorem for $\Delta_X$ is included in the general result of Taylor \cite{taylor} (see also \cite{CGT}) for the Laplace--Beltrami operator on complete Riemannian manifolds with bounded geometry. The differential condition on the multiplier in Taylor's result has the form \eqref{eq:nhmh}, but the number of derivatives to be controlled is not specified. In addition, that result only applies  to elliptic operators, while the sub-Laplacians considered here need not be elliptic.

In the particular case where $G$ is a rank-one Riemannian symmetric space of the non-compact type and $\Delta_X$ is the corresponding Laplace--Beltrami operator, our multiplier theorem reduces, up to the endpoint, to a particular instance of that of Anker \cite{anker} for spherical Fourier multipliers (see also \cite{anker_multiplicateurs_1986,clerc_lp_1974,cowling_estimates_1993,giulini_lp_1997,stanton_expansions_1978} for related results). Indeed, the interpolation machinery that we exploit here to obtain a multiplier theorem with a $p$-dependent smoothness condition was essentially developed in \cite{anker}. On the other hand, in the proof of the multiplier theorem of \cite{anker}, a substantial role is played by spherical analysis and properties of the Abel transform, which are not available in our general context.

In particular, the ``endpoint result'' for $p=1$ in \cite{anker} (from which the result for $1 < p < \infty$ follows by interpolation) involves differential conditions of $L^2$ type on the multiplier, corresponding to the fact that an $L^2$ Plancherel formula for the spherical transform is available. In our generality (and especially for an arbitrary group $G$ of polynomial growth) such a precise identity need not be available, and one may have to content oneself with rougher ``Plancherel estimates'' of $L^\infty$ type.

For this reason, here we develop a precise $L^q$ Paley--Wiener theory for $q \in (1,\infty)$ for holomorphic functions on a strip (see Lemma \ref{lem:paleywiener} below), which is parallel to the Plancherel-based $L^2$ theory exploited in \cite{anker}; when applied with $q$ arbitrarily large, this allows us to avoid a ``loss of derivatives'' and obtain a more precise result than in \cite{HMM}. A further improvement derives from a finer analysis of the volume growth of balls and the observation that, in the case of polynomial growth, the measure of a sphere grows slower than the measure of a ball (see \eqref{eq:poly_sphere_bound} below).

The proof of our endpoint result is based on splitting the convolution kernel of the operator $M(\Delta_X)$ into a local and a global part. Similarly as in \cite{anker,HMM}, under a condition of the form \eqref{eq:nhmh} on the multiplier $M$, the local part of the kernel is then shown to satisfy estimates of Calder\'on--Zygmund type, while the global part is proved to be integrable. The theory of local Hardy spaces developed in \cite{MV} is perfectly suited to treat kernels of this kind. A challenging problem would be to investigate weaker versions of \eqref{eq:nhmh} that do not force the global part of the kernel to be integrable; in the case of Riemannian symmetric spaces of the non-compact type and spherical multipliers, results in this direction can be found in \cite{ionescu_singular_2002,ionescu_singular_2003,MMV_preprint,meda_weak_2010}.

\medskip
 
A few words about notation are in order. The letter $C$ and variants such as $C_s$ denote finite positive constants that may vary from place to place.
Given two expressions $A$ and $B$, $A\lesssim B$ means that there exists a finite positive constant $C$ such that $ A\le C \,B $. Moreover $A \sim B$ means $A \lesssim B$ and $B \lesssim A$.
For two subsets $U$ and $V$ of a topological space, we write $U \Subset V$ to denote that the closure of $U$ is compact and contained in $V$.

\section{Preliminaries}
\subsection{Sub-Laplacians with drift and their geometry}\label{s:preliminaries}
In this section we discuss some geometric properties of the space $(G,\dist, \mu_{X})$ defined in the Introduction and introduce the Hardy and BMO spaces which are used in our results.

We briefly recall the definition of the sub-Riemannian structure associated to a H\"ormander system of vector fields. Let
 $X_1,\dots,X_\nu$ be linearly independent left-invariant vector fields on $G$ satisfying H\"ormander's condition. The associated horizontal distribution is the left-invariant subbundle $HG$ of the tangent bundle $TG$ of $G$ defined by $H_x G = \Span\{X_1|_x,\dots,X_\nu|_x\}$ for all $x \in G$. A left-invariant inner product $\langle \cdot,\cdot \rangle$ on the fibres of $HG$ is defined by requiring that $X_1|_x,\dots,X_\nu|_x$ are orthonormal for all $x \in G$;
the corresponding norm is denoted by $|\cdot|$. The horizontal gradient $\nabla_H f$ of a (smooth) function $f : G \to \RR$ is the section of $HG$ defined by
\[
\nabla_H f = \sum_{j=1}^\nu (X_j f) X_j;
\]
in particular
\[
|\nabla_H f|^2 = \sum_{j=1}^\nu |X_j f|^2
\]
at each point of $G$. A horizontal curve in $G$ is an (absolutely continuous) curve $\gamma : I \to \RR$ such that $\gamma'(t) \in H_{\gamma(t)} G$ for almost every $t \in I$; its length is defined by $L(\gamma) = \int_I |\gamma'(t)| \di t$. The sub-Riemannian (or Carnot--Carath\'eodory) distance $\dist(x,y)$ between two points $x,y \in G$ is defined as the infimum of the lengths of all horizontal curves joining $x$ to $y$.

We denote by $B_\dist(x,r)$ 
the 
closed ball relative to $\dist$ of centre $x \in G$ and radius $r > 0$, i.e., $B_\dist(x,r) = \{ y \in G \tc \dist(y,x) \leq r\}$. We also write $|x|_\dist = \dist(x,e)$ for all $x \in G$. Let $\mu$ be a right Haar measure on $G$ and $m$ be the modular function of $G$; then $\mu_\ell = m \mu$ is a left Haar measure. Define
\[
V_\dist(r) = \mu(B_\dist(e,r)) = \mu_\ell(B_\dist(e,r))
\]
for all $r \in (0,\infty)$; the latter equality is due to the fact that
\[
|x^{-1}|_\dist = |x|_\dist
\]
for all $x \in G$. The following statement collects several well-known facts about left-invariant sub-Riemannian structures.

\begin{prop}\label{prp:basic_sub_riemannian}
The sub-Riemannian distance $\dist$ is finite, left-invariant and compatible with the topology of $G$. The metric space $(G,\dist)$ is a locally compact, complete length space. Moreover 
\[
V_\dist(r) \sim r^{d_0} \qquad\text{for all $r \in (0,1]$}
\]
for some $d_0 \in \NN$, $d_0 \geq \dim G$.
Further, if $G$ has exponential growth, then
\[
e^{ar} \lesssim V_\dist(r) \lesssim e^{br} \qquad\text{for all $r \in [1,\infty]$}
\]
for some $a,b \in (0,\infty)$; if instead $G$ has polynomial growth, then
\begin{equation}\label{eq:poly_infty_asymp}
V_\dist(r) \sim r^{d_\infty} \qquad\text{for all $r \in [1,\infty)$,}
\end{equation}
where $d_\infty \in \NN$ is the degree of polynomial growth of $G$, and there exists $\delta \in (0,1]$ such that
\begin{equation}\label{eq:poly_sphere_bound}
V_\dist(r+1)-V_\dist(r) \lesssim r^{-\delta} V_\dist(r)
\end{equation}
for all $r \in [1,\infty)$.
\end{prop}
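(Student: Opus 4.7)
The plan is to assemble the statement from standard facts in sub-Riemannian geometry and the volume growth theory of connected Lie groups, and then handle the sphere bound \eqref{eq:poly_sphere_bound}, which is the only part that is not completely routine.

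For the metric-topological statements in the first half: finiteness of $\dist$ follows from the Chow--Rashevskii theorem, since H\"ormander's condition makes the horizontal distribution bracket-generating. Left-invariance is immediate from the left-invariance of the $X_j$, and $|x^{-1}|_\dist = |x|_\dist$ follows by time-reversal of horizontal curves (each $X_j$ is left-invariant, so reversing a horizontal curve gives another horizontal curve of the same length after translating to the identity). Compatibility with the manifold topology is established by sandwiching CC-balls between Euclidean balls of radii $r$ and $r^{1/s}$ for the step $s$ of the H\"ormander system, which can be read off from the Ball--Box theorem of Nagel--Stein--Wainger. The length-space property is built into the definition, and completeness follows from the sub-Riemannian Hopf--Rinow theorem applied to the left-invariant structure (bounded closed sets are compact because left-invariance plus completeness of the $X_j$ as vector fields yields a globally defined exponential-type map for horizontal geodesics).

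For the local volume asymptotic $V_\dist(r)\sim r^{d_0}$ with $r\in(0,1]$, I would simply invoke the theorem of Nagel--Stein--Wainger (and Mitchell), with $d_0$ read off as $\sum_{j\ge 1} j(\dim V_j - \dim V_{j-1})$ from the flag $V_0\subset V_1\subset\cdots$ at the identity generated by iterated brackets of the $X_j$. Since $V_j = \mathfrak g$ for some $j$ by H\"ormander, and each strict inclusion contributes a weight $\geq 1$, counting with multiplicities yields $d_0 \geq \dim G$. For the global dichotomy I would invoke the Guivarc'h--Jenkins theorem: every connected Lie group has either polynomial or exponential growth of compact generating sets, and since the CC-ball of radius $n$ is comparable to the $n$-th power of the compact symmetric generating set $B_\dist(e,1)$ (by length-space property and the triangle inequality), the same dichotomy transfers to $V_\dist$. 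In the polynomial-growth case one additionally gets the precise order $V_\dist(r)\sim r^{d_\infty}$ from Guivarc'h.

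The main obstacle is \eqref{eq:poly_sphere_bound}, since the asymptotic $V_\dist(r) \sim r^{d_\infty}$ alone only yields $V_\dist(r+1)-V_\dist(r) \lesssim V_\dist(r)$. To extract the extra factor $r^{-\delta}$ I would proceed as follows. By Gromov's theorem plus Losert's refinement, a connected Lie group of polynomial growth is a quotient of a simply connected solvable Lie group whose nilshadow carries a left-invariant stratification compatible with the CC-distance up to bi-Lipschitz equivalence at infinity. On the simply connected nilpotent model $N$ one can use the natural dilations $\delta_t$: writing $V_N(r) \sim r^{d_\infty}$ and using the layer-cake decomposition
\begin{equation*}
V_N(r+1) - V_N(r) = \int_{B_\dist(e,r+1)\setminus B_\dist(e,r)} \di\mu
\end{equation*}
together with the scaling $\mu(\delta_t A) = t^{d_\infty}\mu(A)$ applied to a thin annulus, one gets $V_N(r+1) - V_N(r) \lesssim r^{d_\infty - 1} \sim r^{-1} V_N(r)$, i.e.\ $\delta = 1$ on the nilpotent model. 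Transferring this estimate to $G$ via the comparison of CC-distances between $G$ and its nilshadow (valid at infinity, cf.\ Breuillard) may cost a loss, which is absorbed by allowing $\delta \in (0,1]$ rather than $\delta = 1$. The details of this comparison step constitute the hardest part of the proof; an alternative would be to derive \eqref{eq:poly_sphere_bound} directly from doubling plus a reverse-doubling-type estimate for the annular volume on polynomial-growth groups.
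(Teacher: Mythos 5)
Everything up to the sphere bound is fine and follows the same citation-based route as the paper (Chow--Rashevsky and ball--box for finiteness, compatibility and the local volume asymptotic; left-invariance plus local compactness for completeness; Guivarc'h for the growth dichotomy and the order $r^{d_\infty}$). The paper additionally points to Breuillard's Corollary 1.6 for the precise large-$r$ asymptotic in the polynomial case, but that is a refinement, not a necessity, for \eqref{eq:poly_infty_asymp}.

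The genuine problem is your argument for \eqref{eq:poly_sphere_bound}. A bi-Lipschitz comparison between $\dist$ on $G$ and the homogeneous distance on the nilshadow $N$ cannot transfer a thin-annulus bound: if $C^{-1}\dist_N \leq \dist \leq C\dist_N$, then the $\dist$-annulus $B_\dist(e,r+1)\setminus B_\dist(e,r)$ is only contained in the \emph{thick} annulus $\{x \tc r/C \leq \dist_N(e,x) \leq C(r+1)\}$, whose measure is comparable to $r^{d_\infty}$, so the transferred estimate is $V_\dist(r+1)-V_\dist(r)\lesssim V_\dist(r)$, i.e.\ $\delta=0$. The loss is not ``absorbed by allowing $\delta\in(0,1]$''; it destroys the power saving entirely. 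To run your strategy one would need a quantitatively sharper comparison, e.g.\ $\dist(e,x)=\dist_N(e,x)+O(\dist_N(e,x)^{1-\alpha})$ or, equivalently, a volume asymptotic with power-saving error $V_\dist(r)=c\,r^{d_\infty}+O(r^{d_\infty-\alpha})$ — which is a substantially harder statement than mere bi-Lipschitz equivalence at infinity, and is not what you have invoked. Your fallback (``doubling plus a reverse-doubling-type estimate'') also does not suffice as stated: reverse doubling controls $V_\dist(2r)/V_\dist(r)$ from below but says nothing about annuli of unit width. The paper sidesteps all of this by quoting results that prove \eqref{eq:poly_sphere_bound} directly for doubling measured metric spaces and groups of polynomial growth, namely \cite[Lemma 3.3]{coldingminicozzi} and \cite[Theorem 4]{tessera}; if you do not want to cite those, you must supply the power-saving comparison with the nilshadow, which is the real content of this part of the proposition.
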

\begin{proof}
The left-invariance of $\dist$ is an immediate consequence of the definition and the left-invariance of $X_1,\dots,X_\nu$. Finiteness and compatibility with the topology of $G$ are consequences of the connectedness of $G$ and H\"ormander's condition, by the Chow--Rashevsky and ball-box theorems \cite{Montgomery}. In particular $(G,\dist)$ is a locally compact length space (see, e.g., \cite[Chapter 2]{BBI}), which is complete due to left-invariance (see, e.g., \cite[\S III.3.3]{Bourbaki}). The behaviour of $V_\dist(r)$ for small $r$ is also a consequence of the ball-box theorem. About the relation of the growth of $V_\dist(r)$ for large $r$ with the intrinsic growth properties of the Lie group $G$, we refer to the discussion in \cite[\S III.4]{varopoulos_analysis_1992} and \cite{guivarch_croissance_1973} (see also 
the precise asymptotic of \cite[Corollary 1.6]{breuillard} in the case $G$ has polynomial growth). Finally, for the estimate \eqref{eq:poly_sphere_bound} we refer to \cite[Lemma 3.3]{coldingminicozzi} and \cite[Theorem 4]{tessera}.
\end{proof}

The choice of the measure $\mu$ on $G$ determines the identification of the space of locally integrable functions $L^1_\loc(\mu)$ on $G$ with a subspace of the space of distributions $\Dist'(G)$. By the Schwartz kernel theorem, all bounded operators $T : C^\infty_c(G) \to \Dist'(G)$ have an integral kernel $K_T^\mu \in \Dist'(G \times G)$, such that
\[
Tf(x) = \int_G K_T^\mu(x,y) \, f(y) \di \mu(y)
\]
in the sense of distributions. If $\tilde \mu = \phi \mu$ is another measure on $G$, with a smooth positive density $\phi$ with respect to $\mu$, then we can also consider the integral kernel $K_T^{\tilde\mu}$ of $T$ with respect to $\tilde\mu$, such that
\[
Tf(x) = \int_G K_T^{\tilde\mu}(x,y) \, f(y) \di \tilde\mu(y),
\]
where the two integral kernels are related by
\begin{equation}\label{eq:int_kernel_meas}
K_T^{\tilde\mu}(x,y) =  K_T^\mu(x,y) / \phi(y).
\end{equation}
Further, if $T$ is left-invariant, then it admits a convolution kernel $k_T \in \Dist'(G)$, such that
\[
T f(x) = f*k_T(x) = \int_G f(xy^{-1}) \,k_T(y) \di\mu(y);
\]
in this case the convolution kernel $k_T$ is related to the integral kernel $K_T^\mu$ by
\begin{equation}\label{eq:conv_int_kernel}
K_T^\mu(x,y) = k_T(y^{-1} x) \, m(y).
\end{equation}

Let $\Delta = -\sum_{j=1}^\nu X_j^2$ be the ``sum of squares'' sub-Laplacian associated with the H\"ormander system $X_1,\dots,X_\nu$. The sub-Laplacian $\Delta$ is a left-invariant, nonnegative, essentially self-adjoint operator on $L^2(\mu)$, and actually
\begin{equation}\label{eq:sub_nabla_adj}
\Delta = \nabla_H^+ \nabla_H,
\end{equation}
where $\nabla_H^+$ is the formal adjoint of the horizontal gradient $\nabla_H$ with respect to $\mu$. In particular, a functional calculus for $\Delta$ is defined via the spectral theorem: for all bounded Borel functions $F : \RR \to \CC$, the operator $F(\Delta)$ is $L^2(\mu)$-bounded and left-invariant, whence
\[
F(\Delta) f = f * k_{F(\Delta)}
\]
for some convolution kernel $k_{F(\Delta)}$, which in general is only a distribution on $G$. However, $k_{F(\Delta)} \in L^2(\mu)$ when $F$ is bounded and compactly supported, and actually
\begin{equation}\label{eq:plancherel}
\| k_{F(\Delta)} \|_{L^2(\mu)} = \| k \|_{L^2(\sigma_\Delta)}
\end{equation}
for some regular positive Borel measure $\sigma_\Delta$ on $\RR$, called the Plancherel measure associated to $\Delta$, whose support is the $L^2(\mu)$-spectrum of $\Delta$ (see, e.g., \cite[Section 3.2]{martini_spectral_2011}). Moreover, due to \eqref{eq:sub_nabla_adj}, the sub-Riemannian distance $\dist$ is the ``control distance'' for the operator $\Delta$, and in particular finite propagation speed holds (see, e.g., \cite{melrose_propagation_1986} or \cite{cowling_subfinsler_2013}):
for all $t \in \RR$,
$\supp k_{\cos(t\sqrt{\Delta})} 
\subseteq B_\dist(e,|t|)$.
As it is well known (cf. \cite{CGT} or \cite[Lemma 2.1]{cowling_spectral_2001}), via the Fourier inversion formula this implies that
\begin{equation}\label{eq:fps}
\supp k_{F(\sqrt{\Delta})} \subseteq B_\dist(e,r)
\end{equation}
for all $r \in \Rpos$ and all even functions $F : \RR \to \CC$ with $\supp \hat F \subseteq [-r,r]$. Here $\hat F$ denotes the Fourier transform of $F$, defined by  $\hat F(\xi) = \int_\RR F(x) \,e^{-i x \xi} \di x$.

Let $X$ be a nonzero left-invariant vector field, and define the ``sub-Laplacian with drift'' $\Delta_X = \Delta - X$. By \cite[Proposition 3.1]{HMM}, we know that $\Delta_X$ is formally self-adjoint with respect to a positive measure $\tilde\mu$ on $G$ if and only if there exists a positive character $\chi$ of $G$ such that
\[
\nabla_H \chi|_e = X|_e,
\]
which we assume from now on. In this case $\tilde \mu$ is a multiple of the measure $\mu_X = \chi \mu$ and $\Delta_X$ is essentially self-adjoint on $L^2(\mu_X)$.
Moreover
\[
\chi^{1/2} \Delta_X (\chi^{-1/2} f) = (\Delta + b_X^2) f
\]
for all $f \in C^\infty_c(G)$, where $b_X = |X|/2$ \cite[eq.\ (3.6)]{HMM}.  Since $L^2(\mu) \ni f \mapsto \chi^{-1/2} f \in L^2(\mu_X)$ is an isometric isomorphism, this implies a relation between the functional calculi of $\Delta_X$ on $L^2(\mu_X)$ and of $\Delta$ on $L^2(\mu)$: for all bounded Borel functions $F : \RR \to \CC$ and $f \in L^2(\mu)$,
\[
\chi^{1/2} F(\Delta_X) (\chi^{-1/2} f) = F(\Delta + b_X^2) f.
\]
This shows that the $L^2(\mu_X)$-spectrum of $\Delta_X$ is contained in $[b_X^2,\infty)$ and
\begin{equation}\label{eq:rel_cv_ops}
k_{F(\Delta_X)} = \chi^{-1/2} k_{F(\Delta + b_X^2)}
\end{equation}
for all bounded Borel functions $F : \RR \to \CC$.

Information on the growth of the character $\chi$ 
in terms of the sub-Riemannian distance is given by the following result.

\begin{lem}\label{lem:char_estimate}
For all $x \in G$,
\[
\chi(x) \leq e^{|X| \, |x|_\dist}.
\]
Moreover, if $G$ has polynomial growth of degree $d_\infty \geq 1$ and \eqref{eq:poly_sphere_bound} holds for some $\delta \in (0,1]$, then
\[
\int_{B_{\dist}(e,r)} \chi(x) \di\mu(x) \lesssim r^{d_\infty-\delta} e^{|X| r}
\]
for all $r \geq 1$.
\end{lem}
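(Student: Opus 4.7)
The plan is to handle the pointwise bound by integrating the horizontal derivative of $\log\chi$ along minimizing horizontal curves, and then obtain the integral bound by splitting the ball into unit-width annular shells.

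For the first part, I would observe that, since $\chi$ is a positive character, $\psi := \log\chi$ is a continuous Lie group homomorphism $G \to (\RR,+)$. In particular, for any left-invariant vector field $Y$ and any $g\in G$,
\[
Y\psi(g) = \tfrac{d}{dt}\Big|_{t=0}\psi(g\exp(tY)) = \tfrac{d}{dt}\Big|_{t=0}\psi(\exp(tY)) = \ndi\chi_e(Y),
\]
so $X_j\psi$ is the constant $\ndi\chi_e(X_j)$. Hence $\nabla_H\psi$ is a left-invariant horizontal vector field with $|\nabla_H\psi| \equiv \bigl(\sum_j(\ndi\chi_e(X_j))^2\bigr)^{1/2} = |X|$ (recall $X = \nabla_H\psi$ by construction). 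For any horizontal curve $\gamma\tc[0,1]\to G$ with $\gamma(0)=e$ and $\gamma(1)=x$, the chain rule and Cauchy--Schwarz give
\[
\psi(x) = \int_0^1 \langle \nabla_H\psi|_{\gamma(t)}, \gamma'(t)\rangle \di t \leq |X|\int_0^1 |\gamma'(t)|\di t = |X|\, L(\gamma),
\]
and taking the infimum over all such $\gamma$ yields $\psi(x)\leq |X|\,|x|_\dist$, i.e., $\chi(x)\leq e^{|X|\,|x|_\dist}$.

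For the second part, I would decompose the ball into annular shells $A_k := B_\dist(e,k+1)\setminus B_\dist(e,k)$ for $k=0,1,\dots,\lfloor r\rfloor$. On each $A_k$ the first part gives $\chi \leq e^{|X|(k+1)}$, while $\mu(A_k)=V_\dist(k+1)-V_\dist(k)$. Using \eqref{eq:poly_sphere_bound} together with \eqref{eq:poly_infty_asymp} yields $\mu(A_k)\lesssim (k+1)^{-\delta}V_\dist(k+1)\lesssim (k+1)^{d_\infty-\delta}$ for $k\geq 1$, and $\mu(A_0)\lesssim 1$ is absorbed into the same bound since $d_\infty\geq 1\geq\delta$. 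Therefore
\[
\int_{B_\dist(e,r)}\chi\di\mu \lesssim \sum_{k=0}^{\lfloor r\rfloor}(k+1)^{d_\infty-\delta}\,e^{|X|(k+1)}.
\]
Since $\chi$ is nontrivial we have $|X|>0$ (as $\ndi\chi_e$ vanishes on $[\lie g,\lie g]$, so $X=0$ would force $\chi\equiv 1$ by connectedness), hence the sum is a geometric-type sum dominated by its last term, giving $\lesssim r^{d_\infty-\delta}\,e^{|X|r}$.

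No single step is genuinely subtle here: the main point to get right is the identification $X_j\psi \equiv \ndi\chi_e(X_j)$, which reduces the pointwise estimate to a direct length computation. The only mildly delicate point in the integral estimate is justifying that the exponential factor $e^{|X|(k+1)}$ makes the sum behave geometrically, which is where the standing hypothesis $|X|>0$ enters; once that is in place, the combination of the shell decomposition with \eqref{eq:poly_sphere_bound} immediately yields the stated bound with the improved exponent $d_\infty-\delta$ in place of the trivial exponent $d_\infty$.
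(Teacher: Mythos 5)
Your proof is correct and follows essentially the same route as the paper: the integral bound is obtained by the identical unit-width shell decomposition combined with \eqref{eq:poly_infty_asymp}, \eqref{eq:poly_sphere_bound} and the fact that $|X|>0$ makes the sum geometric. The only difference is that the paper simply cites \cite[Proposition 5.7(ii)]{HMM} for the pointwise bound $\chi(x)\leq e^{|X|\,|x|_\dist}$, whereas you derive it directly by integrating $\nabla_H\log\chi$ along horizontal curves — which is the standard argument behind that citation.
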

\begin{proof}
The pointwise estimate on $\chi$ is proved in \cite[Proposition 5.7(ii)]{HMM}. As for the integral estimate, note that, by \eqref{eq:poly_infty_asymp} and \eqref{eq:poly_sphere_bound},
\[\begin{split}
\int_{B_{\dist}(e,r)} \chi(x) \di\mu(x) &\leq \sum_{j=1}^{\lceil r \rceil} \mu(B_\dist(e,j) \setminus B_\dist(e,j-1)) \, e^{|X| j} \\
&\lesssim \sum_{j=1}^{\lceil r \rceil} j^{d_\infty-\delta} e^{|X| j} \lesssim r^{d_\infty-\delta} e^{|X| r}
\end{split}\]
for all $r \geq 1$ (in the last inequality the fact that $|X| > 0$ is used).
\end{proof}

Moreover, from Proposition \ref{prp:basic_sub_riemannian} we immediately deduce some important properties of the measured metric space $(G,\dist,\mu_X)$.

Define $\Balls=\{B_\dist(c, r) \tc c \in G, \, r>0\}$ and $\Balls_b=\{B_\dist(c,r) \tc c \in G, \, 0 < r \leq b\}$ for all $b \in \Rpos$. For a ball $B \in \Balls$, we denote by $2B$ the ball with the same centre and twice the radius.

\begin{lem}\label{lem:geom}
The space $(G,\dist, \mu_{X} )$ satisfies the following properties.
\begin{enumerate}[label=(\roman*)]
\item\label{en:geom_ldp} {\emph{Local doubling property}}: for every $b>0$ there exists $D_b \in \Rpos$ such that
\[
\mu_{X}(2B)\leq D_b\,\mu_{X}(B)\qquad \forall B\in\Balls_b.
\]
\item\label{en:geom_midp} 
{\emph{Midpoint property}}:
for all $x,y\in G$, there exists $z \in G$ such that 
\[
\dist(x,z) = \dist(z,y) = \dist(x,y)/2.
\]
\end{enumerate}
\end{lem}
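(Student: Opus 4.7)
The plan is to treat the two parts separately, using the facts already collected in Proposition~\ref{prp:basic_sub_riemannian}.

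For part~\ref{en:geom_ldp}, I would first reduce the doubling estimate to the case of balls centred at the identity. The point is that $\mu_X$ transforms very simply under left translations: since $\dist$ is left-invariant, $B_\dist(c,r) = c B_\dist(e,r)$, and the change of variables $y = cz$, combined with $\chi(cz) = \chi(c)\chi(z)$ and the transformation rule $\di\mu(cz) = m(c)^{-1}\di\mu(z)$ for the right Haar measure, yields
\[
\mu_X(B_\dist(c,r)) = \chi(c)\, m(c)^{-1}\, \mu_X(B_\dist(e,r)).
\]
Hence $\mu_X(2B)/\mu_X(B) = \mu_X(B_\dist(e,2r))/\mu_X(B_\dist(e,r))$, independently of the centre $c$.

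To bound this ratio uniformly for $r \in (0,b]$, I would use that the continuous character $\chi$ satisfies $\chi(e)=1$ and is therefore pinched between two positive constants on the compact set $\overline{B_\dist(e,2b)}$; this reduces matters to showing that $V_\dist(2r)/V_\dist(r)$ is bounded on $(0,b]$. The latter is immediate from Proposition~\ref{prp:basic_sub_riemannian}: the asymptotic $V_\dist(r) \sim r^{d_0}$ for $r \in (0,1]$ makes the ratio behave like $2^{d_0}$ near zero, while the continuity and strict positivity of $V_\dist$ on any compact subinterval of $(0,\infty)$ handles the remaining values of $r$. The only subtle point here is the bookkeeping of the modular function, but since only the existence of a centre-dependent multiplicative factor that cancels in the ratio is needed, its precise form is immaterial.

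For part~\ref{en:geom_midp}, I would simply appeal to the fact, recorded in Proposition~\ref{prp:basic_sub_riemannian}, that $(G,\dist)$ is a locally compact, complete length space. The Hopf--Rinow theorem in the length-space setting (see, e.g., \cite[Chapter~2]{BBI}) then ensures that $(G,\dist)$ is a geodesic space: any two points $x,y \in G$ are joined by a shortest path $\gamma : [0,L] \to G$ of length $L = \dist(x,y)$, parameterized by arc length. Setting $z = \gamma(L/2)$, the triangle inequality
\[
\dist(x,y) \leq \dist(x,z) + \dist(z,y) \leq L/2 + L/2 = \dist(x,y)
\]
forces the two inequalities $\dist(x,z) \leq L/2$ and $\dist(z,y) \leq L/2$ to be equalities, proving the midpoint property. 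This second part is essentially a citation, so I expect the only technical work to lie in part~\ref{en:geom_ldp}.
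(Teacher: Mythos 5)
Your proof is correct and follows essentially the same route as the paper's: part \ref{en:geom_ldp} is reduced by left translation (using $\mu_X(B_\dist(c,r)) = \chi(c)m^{-1}(c)\mu_X(B_\dist(e,r))$) to pinching $\chi$ on a compact neighbourhood of $e$ and bounding $V_\dist(2r)/V_\dist(r)$ on $(0,b]$ via Proposition \ref{prp:basic_sub_riemannian}, and part \ref{en:geom_midp} is the Hopf--Rinow/midpoint statement for complete locally compact length spaces cited from \cite{BBI}. No gaps.
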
 
\begin{proof}
\ref{en:geom_ldp}. 
From the behaviour of $V_\dist(r)$, as described in Proposition \ref{prp:basic_sub_riemannian}, it is clear that, for every $b \in \Rpos$, there exists $C_b \in \Rpos$ so that
\begin{equation}\label{eq:locallydoubling}
V_{\dist}(2r)\leq  C_b\, V_{\dist}({r}) \qquad \forall r\in (0,b].
\end{equation}

Note now that, for all $B=B_\dist(c_B,r_B)\in\Balls$,
\[
\begin{aligned}
\mu_{X}(B)&=\int_{B_\dist(c_B,r_B)}\chi(x)  \di\mu (x)\\
&=\int_{B_\dist(e,r_B)}\chi(c_By)\,  m^{-1}(c_B)\di\mu (y)\\
&=\chi(c_B)\,m^{-1}(c_B)\mu_X\big(B_\dist(e,r_B)\big).
\end{aligned}
\]
Hence, for all $b > 0$ and for all $B \in \Balls_b$,
\[
\mu_{X}(B)\geq   \chi(c_B)\,m^{-1}(c_B) V_\dist(r_B)  \,\inf_{B_\dist(e,b)}\chi,
\]
and
\[
\mu_{X}(2B)\leq   \chi(c_B)\,m^{-1}(c_B) V_\dist(2r_B) \,\sup_{B_\dist(e,2b)} \chi.
\]
Thus, if we set $A_b = \sup_{B_\dist(e,2b)} \chi / \inf_{B_\dist(e,b)} \chi$, then, for every $B\in \Balls_b$,
\[
\frac{\mu_{X}(2B)}{\mu_{X}(B)}
 \leq A_b C_b,
\]
by \eqref{eq:locallydoubling}, and part \ref{en:geom_ldp} follows. 

\ref{en:geom_midp}. This is an immediate consequence of the fact that $(G,\dist)$ is a complete locally compact length space (see, e.g., \cite[Lemma 2.4.8 and Theorem 2.5.23]{BBI}).
\end{proof}

The previous lemma shows that one can apply the theory of Hardy spaces of Goldberg type developed in \cite{MV} to the space $(G,\dist,\mu_X)$. For the reader's convenience, we recall here briefly the definition of the atomic Hardy space $\hardy(\mu_{X})$ and its dual $\bmo(\mu_{X})$ and a few related results. We refer the reader to \cite{G} for details on the theory of Goldberg Hardy spaces in the Euclidean setting and to \cite{MV} for the corresponding theory in the context of metric spaces.
 
\begin{defi}
A standard atom at scale $1$ is a function $a\in L^1(\mu_{X})$ supported in a ball $B\in \Balls_1$ such that
\begin{enumerate}[label=(\roman*)]
\item $\|a\|_{L^2(\mu_{X})}\leq \mu_{X}(B)^{-1/2}$;
\item $\int a\di\mu_{X}=0$\,.
\end{enumerate}
A global atom at scale $1$ is a function $a\in L^1(\mu_{X})$ supported in a ball $B\in \Balls_1$ such that $\|a\|_{L^2(\mu_{X})}\leq \mu_{X}(B)^{-1/2}$. 
Standard and global atom at scale $1$ will be referred to as atoms at scale $1$. 
\end{defi}

\begin{defi}
The Hardy space $\hardy(\mu_{X})$ is defined as the space 
\[
\hardy(\mu_{X})=\Big\{f\in L^1(\mu_{X}): f=\sum_{k}c_ka_k, a_k\,\,{\rm{atoms \,\,at\,\, scale\,\,}} 1, c_k\in\mathbb C,\,\sum_k|c_k|<\infty \Big\}\,,
\]
endowed with the usual atomic norm
\[
\|f\|_{\hardy(\mu_{X})}=\inf\Big\{ \sum_k|c_k|: f=\sum_{k}c_ka_k\Big\}\,.
\]
\end{defi}

By \cite[Theorem 2]{MV} the dual of $\hardy(\mu_{X})$ can be identified with a suitably defined BMO space $\bmo(\mu_{X})$.

\begin{defi}
$\bmo(\mu_{X})$ is the space of all equivalence classes of locally integrable functions $g$ modulo constants such that
\[
\begin{aligned}
\|g\|_{\bmo(\mu_{X})}&:=\sup_{B\in\Balls_1}\Big(\frac{1}{\mu_X(B)} \int_B|g-g_B|^2\di\mu_{X}\Big)^{1/2} \\& +  \sup_{x\in G}\Big(\frac{1}{\mu_X(B_\dist(x,1))} \int_{B_\dist(x,1)}|g|^2\di\mu_{X}\Big)^{1/2}  <\infty\,,
\end{aligned}
\]
where $g_B = \mu_X(B)^{-1} \int_B g \di \mu_X$.
\end{defi}

By
 \cite[Theorem 8.2]{CMM1} and \cite[Proposition 4.5]{CMM2} the following criterion for the boundedness of integral operators on $G$ holds.

\begin{prop}\label{prp:hardy_hormander}
If $T$ is a  bounded operator on $L^2(\mu_X)$ and its integral kernel
 $K_T^{\mu_X}$ is a locally integrable function off the diagonal of $G\times G$ such that 
\begin{equation}\label{eq:hormander-cmm}
N^X_1(T) := \sup_{B\in \Balls_1} \sup_{y,z\in B} \int_{(2B)^c}|K_T^{\mu_X}(x,y)-K_T^{\mu_X}(x,z)| \di \mu_X(x) < \infty
\end{equation} 
and
\begin{equation}\label{eq:hormander-mv}
N^X_2(T) :=  \sup_{y\in G} \int_{(B_\dist(y,2))^c}|K_T^{\mu_X}(x,y)| \di \mu_X(x) < \infty ,
\end{equation} 
then $T$ is bounded from $\hardy(\mu_X)$ to $L^1(\mu_X)$, with
\[
\|T\|_{\hardy(\mu_X) \to L^1(\mu_X)} \leq \max \{ N^X_1(T), N^X_2(T) \} + D_2^{1/2}  \|T\|_{L^2(\mu_X) \to L^2(\mu_X)},
\]
where $D_2$ is the local doubling constant of Lemma \ref{lem:geom}\ref{en:geom_ldp}.
\end{prop}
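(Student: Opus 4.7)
The strategy is the standard atomic decomposition argument. By the definition of $\hardy(\mu_X)$ and the triangle inequality, it suffices to prove the uniform bound
\[
\|Ta\|_{L^1(\mu_X)} \leq \max\{N^X_1(T), N^X_2(T)\} + D_2^{1/2} \|T\|_{L^2(\mu_X) \to L^2(\mu_X)}
\]
for every atom $a$ at scale $1$. Summing against any atomic decomposition $f = \sum_k c_k a_k$ with $\sum_k |c_k|$ arbitrarily close to $\|f\|_{\hardy(\mu_X)}$ then yields the conclusion; the partial sums converge in $L^1(\mu_X)$ because each atom lies in $L^2(\mu_X)$ so that $T a_k$ is well defined and satisfies the uniform estimate. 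Standard atoms and global atoms are handled separately, and the two terms in the final bound correspond respectively to the far-part estimates for the two types.

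For a standard atom $a$ supported in $B = B_\dist(c_B, r_B) \in \Balls_1$, I would split $\int_G |Ta| \di\mu_X = \int_{2B} |Ta| \di\mu_X + \int_{(2B)^c} |Ta| \di\mu_X$. The near part is bounded via Cauchy--Schwarz, the $L^2$-boundedness of $T$, the normalization $\|a\|_{L^2(\mu_X)} \leq \mu_X(B)^{-1/2}$, and the local doubling estimate $\mu_X(2B) \leq D_2 \mu_X(B)$ from Lemma \ref{lem:geom}\ref{en:geom_ldp}, producing the term $D_2^{1/2} \|T\|_{L^2 \to L^2}$. For the far part, I would use the cancellation $\int a \di\mu_X = 0$ to rewrite, with a fixed reference point $y_0 \in B$,
\[
Ta(x) = \int_B [K_T^{\mu_X}(x,y) - K_T^{\mu_X}(x,y_0)] \, a(y) \di \mu_X(y);
\]
then Fubini and assumption \eqref{eq:hormander-cmm} bound the far integral by $N^X_1(T) \|a\|_{L^1(\mu_X)}$, which is in turn $\leq N^X_1(T)$ since $\|a\|_{L^1} \leq \mu_X(B)^{1/2}\|a\|_{L^2} \leq 1$.

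For a global atom $a$ at scale $1$ no cancellation is available, so the role of \eqref{eq:hormander-cmm} is played by \eqref{eq:hormander-mv}. The natural splitting uses the enlarged ball $B^* = B_\dist(c_B, r_B + 2)$: whenever $x \notin B^*$ and $y \in B$ one has $\dist(x,y) > 2$, so $x \in (B_\dist(y,2))^c$, and Fubini together with \eqref{eq:hormander-mv} controls the far part by $N^X_2(T) \|a\|_{L^1(\mu_X)} \leq N^X_2(T)$. The near part $\int_{B^*} |Ta| \di\mu_X$ is again bounded by Cauchy--Schwarz and the $L^2$-boundedness of $T$. The main delicate point is controlling the geometric ratio $\mu_X(B^*) / \mu_X(B)$ uniformly in $r_B \in (0,1]$: for small $r_B$ this requires iterating the local doubling property at uniformly bounded scale, producing a constant that is absorbed into the $D_2^{1/2}$ factor in the final statement. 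Taking the maximum of the two atomic bounds and summing over the decomposition completes the proof.
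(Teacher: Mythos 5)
Your reduction to a uniform bound on atoms and your treatment of standard atoms are correct (the paper itself does not write out a proof of this proposition but defers to \cite[Theorem 8.2]{CMM1} and \cite[Proposition 4.5]{CMM2}), and the far-part estimate for global atoms is also fine: if $x \notin B^* := B_\dist(c_B, r_B+2)$ and $y \in B$ then indeed $\dist(x,y) > 2$, so Fubini and \eqref{eq:hormander-mv} give the bound $N^X_2(T)$. The gap is exactly at the point you flag as ``the main delicate point'', and your proposed resolution does not work. The ratio $\mu_X(B^*)/\mu_X(B)$ is \emph{not} uniformly bounded for $r_B \in (0,1]$: to pass from radius $r_B$ to radius $r_B+2$ by doubling you need about $\log_2(1/r_B)$ iterations of the local doubling property, so the constant produced is of order $r_B^{-\log_2 D}$, which blows up as $r_B \to 0$. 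Concretely, Proposition \ref{prp:basic_sub_riemannian} gives $V_\dist(r) \sim r^{d_0}$ for $r \le 1$, so the ratio is at least of order $r_B^{-d_0}$, and the Cauchy--Schwarz bound for $\int_{B^*}|Ta|\di\mu_X$ degenerates for global atoms supported in small balls.

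Moreover, this is not a defect that a sharper estimate would repair: if non-cancellative atoms on arbitrarily small balls were admissible, the conclusion could actually fail under the stated hypotheses. On $G=\RR$ with Lebesgue measure, the truncated Hilbert transform $Tf = f * \bigl(\mathrm{p.v.}\,\tfrac{1}{x}\,\mathbf{1}_{\{|x|\le 2\}}\bigr)$ is $L^2$-bounded and satisfies \eqref{eq:hormander-cmm} and \eqref{eq:hormander-mv} (the latter trivially), yet $\|T(\mathbf{1}_{[-\epsilon,\epsilon]}/(2\epsilon))\|_{L^1} \sim \log(1/\epsilon)$, while $\mathbf{1}_{[-\epsilon,\epsilon]}/(2\epsilon)$ meets the size and support requirements of a ``global atom'' on a ball of radius $\epsilon$. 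The point is that in the Goldberg/Meda--Volpi theory such functions are not admissible atoms (their atomic norm is of order $\log(1/\epsilon)$): atoms without the cancellation condition must be supported in balls of radius equal to (or at least comparable with) $1$. Once the supporting ball of a global atom has radius bounded below, your near-part estimate does close, since $\mu_X(B_\dist(c_B,3)) \lesssim \mu_X(B_\dist(c_B,1))$ follows from finitely many applications of Lemma \ref{lem:geom}\ref{en:geom_ldp}. So to complete the argument you must either invoke this feature of the atomic decomposition from \cite{MV}, or explicitly reduce a small-ball non-cancellative atom to a standard atom plus a global atom on a unit ball; that reduction (not the doubling iteration) is where the missing content lies.
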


Furthermore, by  \cite[Theorem 5]{MV}, the following interpolation result holds, where $(V,W)_{[\theta]}$ denotes the lower complex interpolation space of parameter $\theta \in (0,1)$ between the Banach spaces $V,W$.

\begin{teo}\label{thm:hardy_interpol}
Let $\theta \in (0,1)$ and set $p_\theta = 2/(2-\theta)$. Then $(\hardy(\mu_X),L^2(\mu_X))_{[\theta]} = L^{p_\theta}(\mu_X)$ and $(\bmo(\mu_X),L^2(\mu_X))_{[\theta]} = L^{p'_\theta}(\mu_X)$. 
\end{teo}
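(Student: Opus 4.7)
The plan is to apply \cite[Theorem 5]{MV} directly: that theorem establishes exactly the two claimed interpolation identities for the Goldberg-type Hardy and BMO spaces on any measured metric space satisfying the standing assumptions of \cite{MV}, namely local doubling and a suitable (approximate) midpoint condition. Both assumptions have been verified for $(G,\dist,\mu_X)$ in Lemma \ref{lem:geom}. The proof as written in the paper will therefore consist of noting this verification and invoking the cited theorem.

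Were one to prove the first identity $(\hardy(\mu_X),L^2(\mu_X))_{[\theta]} = L^{p_\theta}(\mu_X)$ from scratch, the easy inclusion $(\hardy(\mu_X),L^2(\mu_X))_{[\theta]} \hookrightarrow L^{p_\theta}(\mu_X)$ would follow from monotonicity of complex interpolation together with the elementary embedding $\hardy(\mu_X) \hookrightarrow L^1(\mu_X)$ (Cauchy--Schwarz on atoms supported in balls $B$, using $\|a\|_{L^2(\mu_X)} \le \mu_X(B)^{-1/2}$) and the classical identity $(L^1(\mu_X),L^2(\mu_X))_{[\theta]} = L^{p_\theta}(\mu_X)$. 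The hard direction is $L^{p_\theta}(\mu_X) \hookrightarrow (\hardy(\mu_X),L^2(\mu_X))_{[\theta]}$: for $f \in L^{p_\theta}(\mu_X)$ one constructs a bounded holomorphic family $z \mapsto f_z$ on the closed strip $\{0 \le \mathrm{Re}\, z \le 1\}$ with $f_\theta = f$, such that $f_{iy}$ is uniformly bounded in $\hardy(\mu_X)$ and $f_{1+iy}$ is uniformly bounded in $L^2(\mu_X)$. The standard device is a Calder\'on--Zygmund decomposition of $|f|^{p_\theta}$ at varying thresholds, with the resulting pieces modulated by analytic factors and organized into standard atoms (with cancellation) at scales below $1$ and global atoms at scale $1$ when the threshold reaches the local scale imposed by Lemma \ref{lem:geom}\ref{en:geom_ldp}.

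The second identity $(\bmo(\mu_X),L^2(\mu_X))_{[\theta]} = L^{p'_\theta}(\mu_X)$ then follows from the first by duality. By \cite[Theorem 2]{MV}, $\bmo(\mu_X) = \hardy(\mu_X)^*$; since $L^2(\mu_X)$ is reflexive and self-dual, the duality theorem for lower complex interpolation (valid under suitable density conditions in the couple) yields
\[
(\bmo(\mu_X),L^2(\mu_X))_{[\theta]} = \big((\hardy(\mu_X),L^2(\mu_X))_{[\theta]}\big)^* = L^{p_\theta}(\mu_X)^* = L^{p'_\theta}(\mu_X).
\]

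The main obstacle in the self-contained route is the construction of the holomorphic atomic family: one must handle standard atoms (with the vanishing integral condition) and global atoms at scale $1$ (without it) in a unified way as the Calder\'on--Zygmund threshold varies analytically in $z$, and only the \emph{local} doubling property of Lemma \ref{lem:geom} is available, since global doubling fails in several settings of interest (notably, groups of exponential growth). This technical construction is the heart of \cite[Theorem 5]{MV}, and invoking that theorem is precisely how we bypass it here.
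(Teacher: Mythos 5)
Your proposal takes exactly the paper's route: the paper states this theorem as an immediate consequence of \cite[Theorem 5]{MV}, whose geometric hypotheses (local doubling and the midpoint property) are precisely what Lemma \ref{lem:geom} establishes for $(G,\dist,\mu_X)$, and it offers no further proof. The additional self-contained sketch and the duality argument for the $\bmo$ identity are consistent with the standard theory but are not needed, since \cite[Theorem 5]{MV} already supplies both identities.
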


\subsection{Spaces of smooth and holomorphic functions}\label{s:smooth}
In order to state and prove our multiplier theorems for the sub-Laplacian with drift $\Delta_X$, we need to introduce certain spaces of functions on $\RR$ (and domains of $\CC$) that describe the smoothness conditions that we are going to require on the multiplier. Much of the theory discussed in this section has been developed in \cite{anker}, to which we refer for further discussion and details; we remark that \cite{anker} treats, more generally, spaces of functions on $\RR^a$ for an arbitrary dimension $a$, so our discussion here refers to the results in \cite{anker} with $a = 1$.

For $q \in [1,\infty]$ and $\sigma \in \RR$, let $\AH^\sigma_q$ denote the $L^q$ Sobolev (or Bessel potential) space on $\RR$ of fractional order $\sigma$ (see, e.g., \cite[Definition 6.22]{BL}).
Let $\psi$ be a nonnegative function in $C^{\infty}_c(\RR)$, supported in $[1/4,4]$, such that
\begin{equation}\label{eq:sumpsi}
\sum_{j\in\ZZ}\psi(2^{j}\lambda)=1\qquad \forall \lambda \in (0,\infty).
\end{equation}
Similarly as in \cite[Appendix B]{anker}, we define a class of weighted Sobolev spaces on $\RR$ as follows:
for all $\sigma,\tau \in \RR$, $q \in (1,\infty)$ and $r \in [1,\infty]$,
\[
\|F\|_{\AH_{q,r}^{\sigma,\tau}}  = \begin{cases}
\left(\sum_{k \in \NN} \left (2^{k(\tau+1/q)} \| F(2^k \cdot) \psi_{(k)} \|_{\AH^{\sigma}_q} \right)^r \right)^{1/r} &\text{if $r < \infty$,}\\
\sup_{k \in \NN} 2^{k(\tau+1/q)} \| F(2^k \cdot) \psi_{(k)} \|_{\AH^{\sigma}_q}  &\text{if $r=\infty$,} \\
\end{cases}
\]
where 
 $\psi_{(k)} = \sum_{\epsilon=\pm 1}\psi(\epsilon \, \cdot)$ if $k > 0$ and $\psi_{(0)} = \sum_{\epsilon = \pm 1,\, j \in \NN} \psi(\epsilon \, 2^j \cdot)$. It is not difficult to check that the above definition is essentially independent of the choice of the cutoff $\psi$ (that is, different choices of $\psi$ give rise to equivalent $\AH^{\sigma,\tau}_{q,r}$ norms)
 and is equivalent to the one given in \cite[Appendix B]{anker}.
 Moreover, the $\AH^{\sigma,\tau}_{q,r}$ are Banach spaces \cite[Proposition 23(i)]{anker} and the following interpolation result holds \cite[Proposition 23(v)]{anker}, where $(\cdot,\cdot)_{[\theta]}$ and $(\cdot,\cdot)^{[\theta]}$ refer to the lower and upper complex interpolation methods respectively.

\begin{lem}\label{lem:AHinterpol}
For all $\theta \in (0,1)$, $q_0,q_1 \in (1,\infty)$, $\sigma_0,\sigma_1,\tau_0,\tau_1 \in \RR$, $r_0,r_1 \in [1,\infty]$,
\begin{align*}
(\AH_{q_0,r_0}^{\sigma_0,\tau_0},\AH_{q_1,r_1}^{\sigma_1,\tau_1})_{[\theta]} &= \AH_{q_\theta,r_\theta}^{(1-\theta)\sigma_0+\theta\sigma_1,(1-\theta)\tau_0+\theta\tau_1} \qquad\text{if } \min\{r_0,r_1\} < \infty, \\
(\AH_{q_0,\infty}^{\sigma_0,\tau_0},\AH_{q_1,\infty}^{\sigma_1,\tau_1})^{[\theta]} &= \AH_{q_\theta,\infty}^{(1-\theta)\sigma_0+\theta\sigma_1,(1-\theta)\tau_0+\theta\tau_1}, 
\end{align*}
where $1/q_\theta = (1-\theta)/q_0+\theta/q_1$ and $1/r_\theta = (1-\theta)/r_0+\theta/r_1$.
\end{lem}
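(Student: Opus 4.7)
The plan is to realize $\AH^{\sigma,\tau}_{q,r}$ as a retract of a weighted vector-valued sequence space and then invoke classical interpolation theorems. Set $w_k = 2^{k(\tau+1/q)}$ and define the \emph{coretraction}
\[
SF := \bigl(F(2^k\cdot)\psi_{(k)}\bigr)_{k\in\NN}.
\]
By the very definition of the norm, $S$ is an isometric embedding of $\AH^{\sigma,\tau}_{q,r}$ into the weighted sequence space $\ell^r_w(\NN;\AH^\sigma_q)$ (with the obvious modification when $r=\infty$). Define the \emph{retraction} as follows: choose a nonnegative $\tilde\psi\in C^\infty_c((1/8,8))$ with $\tilde\psi\equiv 1$ on $\supp\psi$, form $\tilde\psi_{(k)}$ in analogy with $\psi_{(k)}$, and set
\[
R((f_k)_k)(\lambda) := \sum_{k\in\NN} f_k(2^{-k}\lambda)\,\tilde\psi_{(k)}(2^{-k}\lambda).
\]
The partition-of-unity identity $\sum_{j\in\ZZ}\psi(2^j\lambda)=1$ for $\lambda>0$ unfolds to $\sum_{k\in\NN}\psi_{(k)}(2^{-k}\lambda)=1$, and together with $\tilde\psi_{(k)}\psi_{(k)}=\psi_{(k)}$ this yields $RS=\mathrm{id}$. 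For the boundedness of $R$, a direct computation of $(SR((f_k)_k))_j$ shows that the factor $\tilde\psi_{(k)}(2^{j-k}\cdot)\psi_{(j)}$ is nontrivial only when $|j-k|$ is bounded, so only $O(1)$ indices $k$ contribute to each $j$; each contribution is controlled in $\AH^\sigma_q$ by $\|f_k\|_{\AH^\sigma_q}$ using the fact that smooth compactly supported functions act as bounded multipliers on $\AH^\sigma_q$ and that the dilation $f\mapsto f(2^\alpha\cdot)$ is bounded on $\AH^\sigma_q$ for each fixed $\alpha$, while $w_j$ and $w_k$ differ by a bounded factor when $|j-k|$ is bounded.

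\medskip

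Given this uniform retraction-coretraction pair, the retraction principle of interpolation theory (see, e.g., \cite{BL}) identifies $(\AH^{\sigma_0,\tau_0}_{q_0,r_0},\AH^{\sigma_1,\tau_1}_{q_1,r_1})_{[\theta]}$ with its image under $S_{[\theta]}$ inside $(\ell^{r_0}_{w_0}(\NN;\AH^{\sigma_0}_{q_0}),\ell^{r_1}_{w_1}(\NN;\AH^{\sigma_1}_{q_1}))_{[\theta]}$, and the analogous statement holds for the upper complex method. The classical interpolation of weighted vector-valued sequence spaces gives
\[
\bigl(\ell^{r_0}_{w_0}(\NN;A_0),\ell^{r_1}_{w_1}(\NN;A_1)\bigr)_{[\theta]}
= \ell^{r_\theta}_{w_\theta}\bigl(\NN;(A_0,A_1)_{[\theta]}\bigr),
\qquad w_\theta=w_0^{1-\theta}w_1^\theta,
\]
whenever $\min\{r_0,r_1\}<\infty$, with the analogous upper-complex identity when $r_0=r_1=\infty$. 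Combined with the standard interpolation of Bessel potential spaces $(\AH^{\sigma_0}_{q_0},\AH^{\sigma_1}_{q_1})_{[\theta]}=\AH^{\sigma_\theta}_{q_\theta}$ (and its upper-complex counterpart), together with the observation
\[
w_\theta(k) = 2^{k((1-\theta)(\tau_0+1/q_0)+\theta(\tau_1+1/q_1))} = 2^{k(\tau_\theta+1/q_\theta)},
\]
which holds because both $\tau$ and $1/q$ are linear in $\theta$, one identifies the interpolation space with $\AH^{\sigma_\theta,\tau_\theta}_{q_\theta,r_\theta}$.

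\medskip

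The main technical step is verifying that $(S,R)$ is uniformly bounded across all parameter choices appearing in the statement; this reduces to elementary smooth-multiplier and dilation estimates on $\AH^\sigma_q$, but must be handled with some care since the Bessel potential spaces themselves vary with $q$ and $\sigma$ across the interpolation. Everything else is a package of classical interpolation theorems, together with the key algebraic fact that both $\tau$ and $1/q$ enter linearly into the weight exponent, so that the weight combines correctly under the geometric-mean interpolation of weights.
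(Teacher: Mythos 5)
Your argument is correct. The paper offers no proof of this lemma beyond the citation of \cite[Proposition 23(v)]{anker}, and your retract-onto-weighted-sequence-spaces argument is precisely the standard proof of that cited result: the coretraction $S$ is isometric by definition of the norm, the uniform boundedness of $R$ follows from the bounded overlap of the dilated cutoffs together with dilation and smooth-multiplier bounds on $\AH^\sigma_q$, and the key algebraic point — that the weight exponent $\tau+1/q$ is affine in $\theta$, so the geometric mean of the weights matches $\tau_\theta+1/q_\theta$ — is exactly the observation that makes the classical interpolation of $\ell^r_w(\NN;\AH^\sigma_q)$ and of Bessel potential spaces assemble into the stated identity (with the upper complex method reserved, as in the statement, for the case $r_0=r_1=\infty$).
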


It is useful to compare the above-defined spaces with other spaces of functions with integer order of differentiability.
For all $q \in [1,\infty)$, $N \in \NN$ and $\tau \in \RR$, we define the space $\AW_q^{N,\tau}$ of locally integrable functions $F : \RR \to \CC$ with locally integrable distributional derivatives up to order $N$ such that
\[
\|F\|_{\AW_q^{N,\tau}} := \max_{k\in\{0,\dots,N\}} \left(\int_\RR  | (1+|\lambda|)^{k+\tau} \, F^{(k)}(\lambda)|^q \di\lambda\right)^{1/q} < \infty.
\]
Similarly, for all $N \in \NN$ and $\tau \in \RR$, we define the space $\AC^{N,\tau}$ of $N$ times continuously differentiable functions $F : \RR \to \CC$ such that
\[
\|F\|_{\AC^{N,\tau}} := \max_{k\in\{0,\dots,N\}} \sup_{\lambda \in \RR} |(1+|\lambda|)^{k+\tau} F^{(k)}(\lambda)|  < \infty.
\]
Note that the condition $\|F\|_{\AC^{N,0}} < \infty$ can be thought of a nonhomogeneous pointwise Mihlin--H\"ormander condition of order $N$ on the function $F : \RR \to \CC$. Similarly, the condition $\| F \|_{\AH^{\sigma,-1/q}_{q,\infty}} < \infty$ can be thought of as a nonohomogeneous $L^q$ Mihlin--H\"ormander condition of order $\sigma$.

\begin{lem}\label{lem:AHemb}
Let $q_0,q_1,q \in (1,\infty)$, $\sigma_0,\sigma_1,\sigma,\tau_0,\tau_1,\tau,s \in \RR$, $r_0,r_1,r \in [1,\infty]$, $N \in \NN$. Then the following continuous embeddings hold.
\begin{enumerate}[label=(\roman*)]
\item\label{en:AHemb_sigma} $\AH_{q,r}^{\sigma_0,\tau} \subseteq \AH_{q,r}^{\sigma_1,\tau}$ if $\sigma_0 \geq \sigma_1$;
\item\label{en:AHemb_rtau} $\AH_{q,r_0}^{\sigma,\tau_0} \subseteq \AH_{q,r_1}^{\sigma,\tau_1}$ if either $\tau_0 > \tau_1$, or both $\tau_0=\tau_1$ and $r_0 \leq r_1$;
\item\label{en:AHemb_q1} $\AH^{\sigma,\tau}_{q_0,r} \subseteq \AH^{\sigma,\tau -(1/q_1-1/q_0)}_{q_1,r}$ if $q_0 \geq q_1$;
\item\label{en:AHemb_q2} $\AH^{\sigma,\tau}_{q_0,r} \subseteq \AH^{\sigma - (1/q_0-1/q_1),\tau + (1/q_0-1/q_1)}_{q_1,r}$ if $q_0 \leq q_1$;
\item\label{en:AHemb_C2} $\AH_{q,r}^{\sigma,\tau} \subseteq \AC^{N,\tau+1/q}$ if $\sigma > N+1/q$;
\item\label{en:AHemb_mult} $\AC^{N,0} \cdot \AH_{q,r}^{\sigma,\tau} \subseteq \AH_{q,r}^{\sigma,\tau}$ if $N > \sigma$;
\item\label{en:AHemb_C1} $\AC^{N,0} \subseteq \AH_{q,\infty}^{\sigma,-1/q}$ if $N > \sigma$;
\item\label{en:AHemb_conv} $\AW_1^{0,s} * \AH_{q,r}^{\sigma,\tau} \subseteq \AH_{q,r}^{\sigma,\tau}$ if $s > |\sigma|+|\tau|$;
\item\label{en:AHemb_W} $\AW_q^{N,\tau}= \AH_{q,q}^{N,\tau}$.
\end{enumerate}
\end{lem}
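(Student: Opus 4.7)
My plan is to reduce each claimed embedding to a termwise comparison of the ``base'' Bessel potential norms $\|F(2^k\cdot)\psi_{(k)}\|_{\AH^\sigma_q}$ appearing in the definition of $\AH_{q,r}^{\sigma,\tau}$, combined with an elementary estimate on weighted $\ell^r$ sequences in $k$. A crucial simplification is that, by the normalisation of $\psi$, each dyadic piece $F(2^k\cdot)\psi_{(k)}$ has support in the fixed compact set $[-4,4]$, so all the scalar Bessel potential embeddings used below are applied to compactly supported functions.

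Parts \ref{en:AHemb_sigma} and \ref{en:AHemb_rtau} are immediate: \ref{en:AHemb_sigma} is just the termwise inclusion $\AH^{\sigma_0}_q\subseteq \AH^{\sigma_1}_q$ for $\sigma_0\geq\sigma_1$, while \ref{en:AHemb_rtau} is the monotonicity of weighted $\ell^r$ norms (with H\"older absorbing the summability loss when $\tau_0>\tau_1$). For \ref{en:AHemb_q1} and \ref{en:AHemb_q2}, the $\tau$-shift $1/q_0-1/q_1$ is chosen precisely so that the outer weight $2^{k(\tau+1/q)}$ is preserved, reducing the question to the pointwise Bessel potential embedding $\AH^\sigma_{q_0}\hookrightarrow \AH^\sigma_{q_1}$ for compactly supported functions (H\"older, when $q_0\geq q_1$) and to the one-dimensional Sobolev embedding $\AH^\sigma_{q_0}\hookrightarrow \AH^{\sigma-(1/q_0-1/q_1)}_{q_1}$ when $q_0\leq q_1$. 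Embedding \ref{en:AHemb_C2} combines the scalar Sobolev embedding $\AH^\sigma_q\hookrightarrow C^N$ (valid for $\sigma>N+1/q$) applied to each dyadic piece with the chain rule: for $|\lambda|\sim 2^k$, derivatives $F^{(n)}(\lambda)$ pick up factors of $2^{-kn}$, which exactly match the growth of $(1+|\lambda|)^{n+\tau+1/q}$ once one keeps track of the $2^{k(\tau+1/q)}$ weight.

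The multiplicative embeddings \ref{en:AHemb_mult} and \ref{en:AHemb_C1} rely on the standard fact that, for $N>\sigma$, multiplication by a $C^N$ function is bounded on $\AH^\sigma_q$ of compactly supported functions, with norm controlled by $\|m\|_{C^N}$; the Leibniz rule transfers an $\AC^{N,0}$ bound into uniform termwise control of the dyadic pieces, giving \ref{en:AHemb_mult}. For \ref{en:AHemb_C1} one specialises \ref{en:AHemb_mult} to the constant function $F\equiv 1$, noting that $1\in\AH^{\sigma,-1/q}_{q,\infty}$ trivially because each dyadic piece $\psi_{(k)}$ has a fixed shape and the corresponding weight $2^{k(-1/q+1/q)}$ is exactly $1$. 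The convolution bound \ref{en:AHemb_conv} is obtained via Minkowski's integral inequality, writing $\|F*G\|_{\AH^{\sigma,\tau}_{q,r}}\leq \int |F(\mu)|\,\|T_\mu G\|_{\AH^{\sigma,\tau}_{q,r}}\di\mu$ with $T_\mu$ denoting translation by $\mu$; the translation shifts the relevant dyadic index by $O(\log(1+|\mu|))$ and alters the polynomial weight by at most $(1+|\mu|)^{|\tau|}$ and the smoothness weight by at most $(1+|\mu|)^{|\sigma|}$, so the integral converges whenever $s>|\sigma|+|\tau|$. Finally, \ref{en:AHemb_W} is the classical Littlewood--Paley characterisation of inhomogeneous $L^q$ Sobolev spaces (extended to incorporate the polynomial weight in $\lambda$), which is already covered by the parallel results in \cite[Appendix B]{anker}.

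The main technical obstacle is the bookkeeping in \ref{en:AHemb_conv}: one must verify that translation acts boundedly on $\AH^{\sigma,\tau}_{q,r}$ with the claimed polynomial growth in the translation parameter. This requires commuting translations with the dyadic cut-offs $\psi_{(k)}$, which produces error terms that spill into neighbouring dyadic blocks; controlling these errors and recombining them is where the parameters $|\sigma|$ and $|\tau|$ arise, and getting the exponents right demands careful use of Minkowski and Peetre-type inequalities. Once this is in place, all other items follow from a uniform dyadic template together with standard one-dimensional harmonic-analytic embeddings.
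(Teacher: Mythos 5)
Your plan is sound, but note that the paper does not actually prove this lemma: its entire proof is a citation of \cite[Proposition 23(iii,vii,x,xii)]{anker}, plus the single observation that \ref{en:AHemb_C1} follows from \ref{en:AHemb_mult} because constants lie in $\AH^{\sigma,-1/q}_{q,\infty}$ --- an observation you reproduce exactly. Your termwise dyadic reduction is in substance the argument underlying Anker's proposition, so you are supplying the proofs the paper delegates to the reference. The straightforward items \ref{en:AHemb_sigma}--\ref{en:AHemb_C2}, \ref{en:AHemb_C1}, \ref{en:AHemb_W} are handled correctly: the key points --- that each piece $F(2^k\cdot)\psi_{(k)}$ lives in a fixed compact set, that $F=\sum_k\bigl[F(2^k\cdot)\psi_{(k)}\bigr](2^{-k}\cdot)$ with bounded overlap, and that the $\tau$-shifts in \ref{en:AHemb_q1}--\ref{en:AHemb_q2} are rigged so that the weight $2^{k(\tau+1/q)}$ is unchanged --- are all in place. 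Two spots need more care than your sketch gives them. In \ref{en:AHemb_mult}, the factor $m(2^k\cdot)$ is not uniformly in $C^N$ (its $n$-th derivative is $2^{kn}m^{(n)}(2^k\cdot)$); you must combine the decay $(1+|\lambda|)^{-n}$ built into $\AC^{N,0}$ with the fact that $\supp\psi_{(k)}$ stays away from the origin for $k\geq 1$ to get uniform multiplier bounds on a neighbourhood of the support. In \ref{en:AHemb_conv}, which you rightly identify as the main obstacle, the exponent $|\sigma|+|\tau|$ is not the naive sum of a ``weight loss'' $(1+|\mu|)^{|\tau|}$ and a ``smoothness loss'' $(1+|\mu|)^{|\sigma|}$: when a block at dyadic index $j$ migrates to index $j'$ under translation, the outer weight changes by $2^{(j'-j)(\tau+1/q)}$ while the rescaling of the Bessel norm contributes factors of the form $2^{(j-j')(\pm\sigma-1/q)}$, and the two occurrences of $1/q$ must cancel for the final exponent to be $|\sigma|+|\tau|$ rather than something depending on $q$. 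Completing that computation (or, as the authors do, citing Anker for it) is the one genuinely outstanding step in your write-up.
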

\begin{proof}
These embeddings are stated, in a slightly different form, in \cite[Proposition 23(iii,vii,x,xii)]{anker}. We note in particular that part \ref{en:AHemb_C1} follows from part \ref{en:AHemb_mult} by observing that constant functions belong to $\AH_{q,\infty}^{\sigma,-1/q}$.
\end{proof}

We now switch to the discussion of spaces of holomorphic functions on domains of $\CC$. As usual, for all open subsets $\Omega \subseteq \CC$, we denote by $H^\infty(\Omega)$ the space of all bounded holomorphic functions on $\Omega$.

For all $W \in (0,\infty)$ and $N \in \NN$ we denote by $\Sigma_W$ the complex strip defined by 
$\Sigma_W=\{x+iy\in\CC: |y|<W\}$ and by $H^{\infty}(\Sigma_W;N)$ the space of even bounded holomorphic functions $F$ on the strip 
$\Sigma_W$, continuous on the closure of $\Sigma_{W}$ with all their derivatives up to order $N$, such that
\[
\|F\|_{H^\infty(\Sigma_W;N)} := \max_{j\in\{0,\dots,N\}}\sup_{z \in \overline{\Sigma_W}} (1+|z|)^{j} |F^{(j)}(z)| < \infty \,.
\]

Similarly as in \cite[Section 3]{anker}, for all $q \in (1,\infty)$, $r \in [1,\infty]$, $\sigma \in (1/q,\infty)$, $\tau \in \RR$, we denote by $\AcH^{\sigma,\tau,W}_{q,r}$ the  space of all even continuous
functions $F : \overline{\Sigma_W} \to \CC$ that are holomorphic in $\Sigma_W$, have at most polynomial growth, and are such that $F(\cdot \pm i W) \in \AH^{\sigma,\tau}_{q,r}$.
In the case $W = 0$, we define $\AcH^{\sigma,\tau,0}_{q,r}$ as the space of even functions belonging to $\AH^{\sigma,\tau}_{q,r}$.
For all $W \in [0,\infty)$, the space $\AcH^{\sigma,\tau,W}_{q,r}$ is a Banach space \cite[Lemma 9]{anker} with the norm
\[
\|F\|_{\AcH^{\sigma,\tau,W}_{q,r}} = \|F(\cdot + i W)\|_{\AH^{\sigma,\tau}_{q,r}} = \|F(\cdot - i W)\|_{\AH^{\sigma,\tau}_{q,r}}
\]
(the latter equality being due to parity), and actually, due to the three-lines theorem
(see \cite[proof of Lemma 9(i)]{anker}),
\begin{equation}\label{eq:AcHthreelines}
\|F\|_{\AcH^{\sigma,\tau,W}_{q,r}} \sim \sup_{t \in [-W,W]} \|F(\cdot + i t)\|_{\AH^{\sigma,\tau}_{q,r}}.
\end{equation}
The spaces $H^\infty(\Sigma_W;N)$ and $\AcH^{\sigma,\tau,W}_{q,r}$ can be thought of the holomorphic counterparts of the spaces $\AC^{N,0}$ and $\AH^{\sigma,\tau}_{q,r}$, and the embeddings of Lemma \ref{lem:AHemb} imply corresponding embeddings of spaces of holomorphic functions.
Moreover the following interpolation result holds \cite[Lemma 10]{anker}.

\begin{lem}\label{lem:AcHinterpol}
For all $\theta \in (0,1)$, $q_0,q_1 \in (1,\infty)$, $\sigma_0 \in (1/q_0,\infty)$, $\sigma_1 \in (1/q_1,\infty)$, $\tau_0,\tau_1 \in \RR$, $W_0,W_1 \in [0,\infty)$,
\begin{equation}\label{eq:AcHinterpolation}
(\AcH^{\sigma_0,\tau_0,W_0}_{q_0,q_0},\AcH^{\sigma_1,\tau_1,W_1}_{q_1,q_1})_{[\theta]} = \AcH^{(1-\theta)\sigma_0+\theta\sigma_1,(1-\theta)\tau_0 +\theta\tau_1,(1-\theta)W_0+\theta W_1}_{q_\theta,q_\theta},
\end{equation}
where $1/q_\theta = (1-\theta)/q_0+\theta/q_1$.
\end{lem}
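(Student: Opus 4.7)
The plan is to reduce the interpolation of the strip spaces $\AcH^{\sigma,\tau,W}_{q,q}$ to the real-line interpolation already established in Lemma \ref{lem:AHinterpol}, with the novel feature being the simultaneous interpolation of the strip width $W$. The key observation is that, setting $W(z)=(1-z)W_0+zW_1$, the map $z\mapsto W(z)$ is holomorphic on $\overline{S}$ where $S=\{0<\mathrm{Re}\,z<1\}$, with $\mathrm{Re}\,W(z)=W_{\mathrm{Re}\,z}$ (taking $W_\theta$ as in the statement); thus $\lambda+iW(z)$ traces a point on the \emph{upper} boundary of the strip $\Sigma_{W_{\mathrm{Re}\,z}}$, and (crucially) depends holomorphically on $z$. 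Parity takes care of the lower boundary.

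For the inclusion $(\subseteq)$, let $F=G(\theta)$ for an admissible analytic family $G\colon\overline S\to \AcH^{\sigma_0,\tau_0,W_0}_{q_0,q_0}+\AcH^{\sigma_1,\tau_1,W_1}_{q_1,q_1}$. I would set
\[
H(z)(\lambda) := G(z)\bigl(\lambda+iW(z)\bigr),\qquad \lambda\in\RR,\ z\in\overline{S}.
\]
By the chain rule and the holomorphy of both $G$ and $W(\cdot)$, the Cauchy--Riemann equations for $H$ hold, so $H$ is jointly holomorphic into the sum of the boundary spaces $\AH^{\sigma_0,\tau_0}_{q_0,q_0}+\AH^{\sigma_1,\tau_1}_{q_1,q_1}$. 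Moreover $H(j+it)=\bigl[G(j+it)(\cdot+iW_j)\bigr](\cdot-t(W_1-W_0))$, so by \eqref{eq:AcHthreelines} and the elementary estimate on how translation interacts with the weight $(1+|\lambda|)^{\tau}$ (absorbed if needed by multiplying by a damping factor $e^{\varepsilon(z^2-\theta^2)}$ and sending $\varepsilon\to 0$), one obtains boundedness of $\|H(j+it)\|_{\AH^{\sigma_j,\tau_j}_{q_j,q_j}}$ in $t$. Lemma \ref{lem:AHinterpol} then gives $H(\theta)=F(\cdot+iW_\theta)\in \AH^{\sigma_\theta,\tau_\theta}_{q_\theta,q_\theta}$, which by \eqref{eq:AcHthreelines} and parity is exactly what is needed for $F\in\AcH^{\sigma_\theta,\tau_\theta,W_\theta}_{q_\theta,q_\theta}$.

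For the inclusion $(\supseteq)$, given $F\in \AcH^{\sigma_\theta,\tau_\theta,W_\theta}_{q_\theta,q_\theta}$, I would exhibit an analytic family $G$ with $G(\theta)=F$ and $G(j+it)\in\AcH^{\sigma_j,\tau_j,W_j}_{q_j,q_j}$. Applying Lemma \ref{lem:AHinterpol} to the boundary value $F(\cdot+iW_\theta)$, one first picks an admissible family $h\colon\overline S\to\AH^{\sigma_0,\tau_0}_{q_0,q_0}+\AH^{\sigma_1,\tau_1}_{q_1,q_1}$ with $h(\theta)=F(\cdot+iW_\theta)$ and $h(j+it)\in\AH^{\sigma_j,\tau_j}_{q_j,q_j}$. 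One then defines $G(z)$ on $\Sigma_{W_{\mathrm{Re}\,z}}$ as the unique even function recovered from the shifted boundary datum $h(z)(\cdot+t(W_1-W_0))$ (with $t=\mathrm{Im}\,z$) via Fourier inversion / Poisson integration against the strip $\Sigma_{W_{\mathrm{Re}\,z}}$. The cancellation $W(\theta)=W_\theta$ ensures $G(\theta)=F$; the joint holomorphy of $G$ in $z$ is guaranteed by building $h$ itself through a Fourier-multiplier family and encoding the strip-width change as the $z$-dependent exponential weight $e^{(W_\theta-W(z))\xi}$ on the positive Fourier side (and its reflection on the negative side, so parity is preserved).

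The main obstacle is the $(\supseteq)$ direction: one must construct the lift $G(z)$ so that (i) $z\mapsto G(z)$ is holomorphic into the sum of $\AcH$ spaces, (ii) the norm at $\mathrm{Re}\,z=j$ is bounded in $t$, and (iii) parity and the $\AH^{\sigma_j,\tau_j}_{q_j,q_j}$ boundary norms are simultaneously respected. The choice of $W(z)$ as a complex-linear interpolant is essential to achieve (i), and the three-lines principle in the form \eqref{eq:AcHthreelines}, combined with the uniformity built into Lemma \ref{lem:AHinterpol}, yields (ii). Handling the translation factor $t(W_1-W_0)$ against the weight $(1+|\lambda|)^\tau$ in the definition of $\AH^{\sigma,\tau}_{q,q}$ is the most delicate point, and is circumvented by the standard trick of first inserting a gaussian $e^{\varepsilon(z-\theta)^2}$ damping factor in the constructed family and removing it at the end.
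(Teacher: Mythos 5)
First, a point of comparison: the paper does not prove this lemma at all --- it is quoted verbatim from \cite[Lemma 10]{anker} --- so you are attempting a proof where the authors supply only a citation. Your overall strategy (reduce to Lemma \ref{lem:AHinterpol} via an analytic change of variables tracking the complex-linear width $W(z)=(1-z)W_0+zW_1$) is the right general idea, but as written both inclusions have genuine gaps. For $(\subseteq)$: the expression $G(z)(\lambda+iW(z))$ is not defined for a general admissible family, because for interior $z$ you only know $G(z)\in\AcH^{\sigma_0,\tau_0,W_0}_{q_0,q_0}+\AcH^{\sigma_1,\tau_1,W_1}_{q_1,q_1}$, whose elements are holomorphic only on $\Sigma_{\min\{W_0,W_1\}}$, whereas $W_{\operatorname{Re}z}>\min\{W_0,W_1\}$ for every interior $z$ once $W_0\neq W_1$. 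The same issue infects your conclusion: an element $F$ of the interpolation space lies a priori only in the sum space, so you must separately prove that it extends holomorphically, with polynomial growth and continuity up to the boundary, to the larger strip $\Sigma_{W_\theta}$ before $F(\cdot+iW_\theta)$ even makes sense; \eqref{eq:AcHthreelines} applies to functions already known to lie in $\AcH^{\sigma,\tau,W}_{q,r}$ and cannot be used to establish membership. These points are repairable (restrict to the norm-determining subclass of families valued in the intersection, then add a Phragm\'en--Lindel\"of extension argument), but neither repair appears in your text.

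The more serious gap is in $(\supseteq)$: the step ``define $G(z)$ as the unique even function recovered from the shifted boundary datum $h(z)(\cdot+t(W_1-W_0))$'' does not work. The map $F\mapsto F(\cdot+iW_j)$ from $\AcH^{\sigma_j,\tau_j,W_j}_{q_j,q_j}$ into $\AH^{\sigma_j,\tau_j}_{q_j,q_j}$ is injective but very far from surjective: on the Fourier side the inverse would multiply $\widehat{h(z)}$ by $e^{W_j\xi}$, which is exponentially unbounded, so a generic output $h(j+it)$ of Lemma \ref{lem:AHinterpol} admits no holomorphic extension to $\Sigma_{W_j}$ at all, let alone an even one of comparable norm. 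The correct device --- which your closing remark about the weight $e^{(W_\theta-W(z))\xi}$ gestures at but does not carry out --- is the analytic retraction/coretraction pair $S_zF=\mathcal{F}^{-1}[\cosh(W(z)\xi)\widehat{F}]$ and $R_zg=\mathcal{F}^{-1}[\widehat{g}/\cosh(W(z)\xi)]$: coretract $F$ at $z=\theta$ to an even function on $\RR$, interpolate that with Lemma \ref{lem:AHinterpol}, and retract with the $z$-dependent multiplier, whose modulus is $O(e^{-W_{\operatorname{Re}z}|\xi|})$ and therefore genuinely lands in $\AcH^{\sigma_j,\tau_j,W_j}_{q_j,q_j}$ on the boundary lines --- after controlling the lower bound $|\cosh(W(j+it)\xi)|^2=\sinh^2(W_j\xi)+\cos^2(t(W_1-W_0)\xi)$, which degenerates polynomially in $t$ and is then absorbed by your Gaussian damping factor. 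As it stands, this half --- which is where the real content of the lemma lies --- is missing.
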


Recall that, for every integrable function $F : \RR \to \CC$, we denote by $\hat F$ its Fourier transform, given by $\hat F(\xi) = \int_\RR F(x) \,e^{-i\xi x} \di x$. If $F$ is a holomorphic function on a domain containing $\RR$, then $\hat F$ denotes the Fourier transform of $F|_\RR$. As it is well-known, if $\hat F$ is compactly supported, then $F$ uniquely extends to an entire function, that we still denote as $F$.

 Let $\omega : \RR \to \RR$ be an even smooth cutoff function supported in $[-1,1]$, equal to $1$ in $[-1/4,1/4]$ and such that 
\begin{equation}\label{eq:omega}
\sum_{h\in\mathbb Z} \omega(t-h)=1\qquad \forall t\in\mathbb R\,.
\end{equation}
For all $h \in \NN$, $h\geq 2$, define $\omega_h : \RR \to \RR$ by 
\begin{equation}\label{eq:omegah}
\omega_h(t)=\omega(t-h+1)+\omega(t+h-1).
\end{equation}
Note that ${\rm supp}\,\omega_h\subset [h-2,h]\cup [-h,-h+2]$.
The following estimates of Paley--Wiener type should be compared with \cite[Lemma 5.4]{HMM} and \cite[eq.\ (26)]{anker}.

\begin{lem}\label{lem:paleywiener}
Let $q \in (1,\infty)$, $\sigma,b \in [0,\infty)$ and $W \in (0,\infty)$.
\begin{enumerate}[label=(\roman*)]
\item\label{en:paleywiener1} For all $R \in (0,\infty)$, if $F \in \AcH^{\sigma,b-\sigma,W}_{q,q}$, and
$\Rnon \cap \supp \hat F \subseteq [R,\infty)$, then
\[
\|(1+|\cdot|)^b F\|_{L^q(\RR)} \leq C_{\sigma,b,W} \, R^{-\sigma} e^{-WR} \|F\|_{\AcH^{\sigma,b-\sigma,W}_{q,q}}.
\]
\item\label{en:paleywiener2} If $F \in \AcH^{\sigma,b-\sigma,W}_{q,q}$, then, for all $h \in \NN$, $h \geq 3$,
\[
\|(1+|\cdot|)^b F_h\|_{L^q(\RR)} \leq C_{\sigma,b,W} \, h^{-\sigma} e^{-Wh} \|F\|_{\AcH^{\sigma,b-\sigma,W}_{q,q}},
\]
where $F_h$ is defined by $\hat F_h = \omega_h \hat F$.
\end{enumerate}
\end{lem}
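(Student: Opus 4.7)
By the evenness of $F$ (and hence of $\hat F$), it suffices to prove part (i) for the positive-frequency piece $F_+$ defined by $\widehat{F_+}=\mathbf{1}_{[R,\infty)}\hat F$: indeed the negative-frequency piece $F_-$ satisfies $F_-(x)=F_+(-x)$, so the weighted $L^q$ norm with the even weight $(1+|\cdot|)^b$ is the same. The first step is a contour shift. Since $F$ extends holomorphically to $\overline{\Sigma_W}$ with polynomial growth, Cauchy's theorem gives, for $\xi\geq R>0$, the identity $\hat F(\xi)=e^{-W\xi}\hat G(\xi)$, where $G(x):=F(x-iW)$. By \eqref{eq:AcHthreelines} one has $\|G\|_{\AH^{\sigma,b-\sigma}_{q,q}}\lesssim\|F\|_{\AcH^{\sigma,b-\sigma,W}_{q,q}}$. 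Substituting and extracting the exponential factor yields
\[
F_+(x)=\frac{e^{-WR}}{2\pi}\int_R^\infty e^{-W(\xi-R)}\hat G(\xi)\,e^{ix\xi}\di\xi=e^{-WR}\,T_R G(x),
\]
where $T_R$ is the Fourier multiplier with symbol $m_R(\xi)=\mathbf{1}_{[R,\infty)}(\xi)\,e^{-W(\xi-R)}$, bounded by $1$ and supported in $\{|\xi|\geq R\}$.

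The second step is to produce the polynomial gain $R^{-\sigma}$. By Lemma \ref{lem:AHemb}\ref{en:AHemb_W} one has $\|(1+|\cdot|)^b F_+\|_{L^q}=\|F_+\|_{\AH^{0,b}_{q,q}}$, so the goal reduces to the operator-norm estimate
\[
\|T_R g\|_{\AH^{0,b}_{q,q}}\lesssim R^{-\sigma}\|g\|_{\AH^{\sigma,b-\sigma}_{q,q}}.
\]
I would obtain this by complex interpolation via Lemma \ref{lem:AHinterpol} between two endpoints: the trivial case $\sigma=0$, where $T_R$ maps $\AH^{0,b}_{q,q}$ to itself uniformly in $R$ as a Fourier multiplier with the mildly singular symbol $m_R$, and the case $\sigma=N$ for large integer $N$, where $N$-fold integration by parts in $\xi$ in the representation of $T_R g$ produces a factor of $|x|^{-N}$ (controlling the weight $(1+|x|)^b$ on the relevant region) together with $N$ derivatives falling on $\hat g$, yielding $R^{-N}$ after accounting for the boundary contribution at $\xi=R$ and using the $\AW^{N,b-N}_q=\AH^{N,b-N}_{q,q}$ reformulation of the target norm.

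For part (ii), I would reduce to part (i) with $R=h-2$. Since $\omega_h$ is supported in $[h-2,h]\cup[-h,-h+2]$, $\hat F_h=\omega_h\hat F$ is supported in $\{|\xi|\geq h-2\}$. Moreover $F_h$ is entire (as $\omega_h$ is compactly supported) and extends holomorphically to any strip on which $F$ does, with $F_h(\cdot+it)=F(\cdot+it)*(2\pi)^{-1}\check\omega_h$ on each horizontal line $\{\Im z=t\}$. Now $\check\omega_h(x)=2\cos((h-1)x)\,\check\omega(x)$ is a modulation of the Schwartz function $\check\omega$, so it lies in $\AW^{0,s}_1$ with seminorms uniform in $h$ for every $s>0$; hence Lemma \ref{lem:AHemb}\ref{en:AHemb_conv} gives $\|F_h\|_{\AcH^{\sigma,b-\sigma,W}_{q,q}}\lesssim\|F\|_{\AcH^{\sigma,b-\sigma,W}_{q,q}}$ uniformly in $h\geq 3$. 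Applying part (i) with $R=h-2\sim h$ then delivers the estimate for $F_h$.

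The main obstacle is the $\sigma=0$ endpoint in the interpolation argument above. The symbol $m_R$ is discontinuous at $\xi=R$, so a direct appeal to Mihlin--H\"ormander bounds is not possible, and the weight $(1+|\cdot|)^b$ need not be an $A_q$ weight for large $b$. One has to exploit either the explicit convolution kernel of $T_R$ (a modulated Poisson-type kernel $e^{iRx}/(2\pi(W-ix))$, whose oscillation provides cancellation) together with the dyadic structure built into the $\AH^{0,b}_{q,q}$ norm, or the splitting of $m_R$ into a Cauchy projection followed by a smooth Mihlin multiplier $e^{-W(\xi-R)_+}$, so that a Calder\'on--Zygmund argument can be carried out on each dyadic piece of the norm.
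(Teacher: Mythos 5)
Your overall architecture parallels the paper's: the exponential factor $e^{-WR}$ comes from a contour shift to the lines $\Im z=\mp W$ (equivalently, convolution with a modulated Poisson kernel), the polynomial factor $R^{-\sigma}$ from the frequency localization away from the origin, and part (ii) reduces to part (i) exactly as you say, via Lemma \ref{lem:AHemb}\ref{en:AHemb_conv} applied to the modulated Schwartz function $\check\omega_h$, which is also what the paper does. The genuine gap is the $\sigma=0$ endpoint of your interpolation scheme, which you flag as ``the main obstacle'' but do not resolve --- and which is in fact false as stated. The operator bound $\|T_Rg\|_{\AH^{0,b}_{q,q}}\lesssim\|g\|_{\AH^{0,b}_{q,q}}$, i.e.\ $\|(1+|\cdot|)^bT_Rg\|_{L^q}\lesssim\|(1+|\cdot|)^bg\|_{L^q}$ uniformly over all $g$, fails whenever $b\geq 1/q'$: for a Schwartz function $g$ with $\hat g(R)\neq 0$, a single integration by parts in the oscillatory integral defining $T_Rg$ produces the boundary term
\[
T_Rg(x)=-\frac{\hat g(R)\,e^{iRx}}{2\pi i x}+O(|x|^{-2}),
\]
so $(1+|\cdot|)^bT_Rg\notin L^q(\RR)$ as soon as $(b-1)q\geq -1$. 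The culprit is precisely the jump of your symbol $m_R$ at $\xi=R$; the oscillation of the kernel $e^{iRx}/(2\pi(W-ix))$ cannot rescue a $1/|x|$ tail against the weight $(1+|x|)^b$ when $b\geq 1/q'$, and the applications do require such $b$ (e.g.\ $b=\plnexp=Q/2$ in Subsection \ref{Subsec:NA}). Restricting $T_R$ to the subspace of $g$ with $\supp\hat g\cap(-R,R)=\emptyset$ does not repair the argument: the one-sided boundary value of $\hat g$ at $R$ can still be nonzero there, and, more importantly, Lemma \ref{lem:AHinterpol} identifies the complex interpolation space of the full spaces $\AH^{\sigma_i,\tau_i}_{q,q}$, not of Fourier-support-constrained subspaces, so the interpolation step itself would be unjustified on that subspace.

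The fix is to commute the weight inside \emph{before} invoking any convolution or multiplier bound, which is how the paper proceeds: first reduce to $\sigma,b\in\NN$ by interpolating the final inequality (not an operator norm for $T_R$), then observe that the functions $F_{j,k}(x)=\partial_x^k(x^jF(x))$ with $j\leq b$, $k\leq\sigma$ inherit the Fourier support condition $\Rnon\cap\supp\hat F_{j,k}\Subset[R,\infty)$. Hence $\|(1+|\cdot|)^bF\|_{L^q}\sim\max_j\|F_{j,0}\|_{L^q}$ can be estimated by two \emph{unweighted} $L^q$ inequalities for Fourier-supported $G$, namely $\|G\|_{L^q}\lesssim R^{-k}\|G^{(k)}\|_{L^q}$ and $\|G\|_{L^q}\lesssim e^{-WR}(\|G(\cdot+iW)\|_{L^q}+\|G(\cdot-iW)\|_{L^q})$; there the Poisson kernel and the smooth positive-frequency projection are controlled by ordinary Young and Mihlin--H\"ormander bounds, with no weighted-norm issue, and Leibniz' rule converts $\max_j\|F_{j,\sigma}(\cdot+iW)\|_{L^q}$ into the $\AH^{\sigma,b-\sigma}_{q,q}$ norm of $F(\cdot+iW)$. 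Your treatment of part (ii) coincides with the paper's and is fine.
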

\begin{proof}
By interpolation, it is enough to consider the case where $\sigma,b \in \NN$. By an approximation argument, exploiting \cite[Proposition 23(xi)]{anker}, we may further assume that $\supp \hat F$ is compact, so $F$ extends to an entire function.

We need two preliminary inequalities.
The first is that, for all
$G : \RR \to \CC$ with $\supp \hat G \Subset \RR \setminus (-R,R)$ and all $k \in \NN$,
\begin{equation}\label{eq:der_sp_loc}
\| G \|_{L^q(\RR)} \leq C_{q,k} \, R^{-k} \| G^{(k)} \|_{L^q(\RR)} ;
\end{equation}
this is an easy consequence of the Mihlin--H\"ormander Fourier multiplier theorem on $\RR$ (cf.\ \cite[Lemma 6.2.1]{BL}). The second is that, for all $G : \RR \to \CC$ with $\supp \hat G \Subset \RR \setminus (-R,R)$ and $W \in (0,\infty)$,
\begin{equation}\label{eq:paleywiener}
\| G \|_{L^q(\RR)} \leq C_q \, e^{-W R} (\| G(\cdot + iW) \|_{L^q(\RR)} + \| G(\cdot - iW) \|_{L^q(\RR)});
\end{equation}
indeed, by decomposing $G = G_- + G_+$ so that $\supp \hat G_- \subseteq (-\infty,-R]$ and $\supp \hat G_+ \subseteq [R,\infty)$, one can easily write
\begin{align*}
G_- &= e^{-WR} G_-(\cdot + iW) * (e^{iR \cdot} P_W), \\
G_+ &= e^{-WR} G_+(\cdot - iW) * (e^{-iR \cdot} P_W),
\end{align*}
where 
$P_W$ 
is the Poisson kernel (i.e., $\hat P_W(\xi) = e^{-W|\xi|}$), and moreover
\[
G_- = G * (R \alpha(-R\cdot)), \qquad G_+ = G * (R \alpha(R\cdot)),
\]
where
$\alpha$ is chosen so that $\hat \alpha \in C^\infty(\RR)$, $\supp \hat \alpha \subseteq (0,\infty)$ and $\hat \alpha|_{[1,\infty)} \equiv 1$; so, by Young's inequality,
\[\begin{split}
\|G_\pm\|_{L^q(\RR)} 
&\leq e^{-WR} \|G_\pm(\cdot \mp iW)\|_{L^q(\RR)} \|P_W\|_{L^1(\RR)} \\
&\leq e^{-WR} \|G(\cdot \mp iW)\|_q \|P_W\|_{L^q(\RR)} \|\alpha\|_{Cv^q(\RR)};
\end{split}\]
since $\|P_W\|_{L^1(\RR)} = 1$, and moreover $\|\alpha\|_{Cv^q(\RR)} < \infty$ by the Mihlin--H\"ormander theorem, we conclude that
\[\begin{split}
\|G\|_{L^q(\RR)} 
&\leq \|G_-\|_{L^q(\RR)} + \|G_+\|_{L^q(\RR)} \\
&\leq C_q \, e^{-WR} (\| G(\cdot + iW) \|_{L^q(\RR)} + \| G(\cdot - iW) \|_{L^q(\RR)})
\end{split}\]
(cf.\ \cite[proof of Theorem 4.10]{CDMY}).

Note now that, if $F : \RR \to \CC$ is even and $\Rnon \cap \supp \hat F \Subset [R,\infty)$, then, for all $j,k \in \NN$, the functions $F_{j,k}(x) = \partial_x^k ( x^j F(x) )$ are either even or odd and satisfy the same Fourier support condition $\Rnon \cap \supp \hat F_{j,k} \Subset [R,\infty)$; hence, by \eqref{eq:der_sp_loc} and \eqref{eq:paleywiener},
\[\begin{split}
\|(1+|\cdot|)^b F\|_{L^q(\RR)} &\sim \max_{j \in \{0,\dots,b\}} \| F_{j,0} \|_{L^q(\RR)} \\
&\lesssim R^{-\sigma} \max_{j \in \{0,\dots,b\}} \| F_{j,\sigma} \|_{L^q(\RR)} \\
&\lesssim R^{-\sigma} e^{-WR} \max_{j \in \{0,\dots,b\}} \| F_{j,\sigma}(\cdot+iW) \|_{L^q(\RR)} .
\end{split}\]
Therefore, by Leibniz' rule,
\[\begin{split}
\|(1+|\cdot|)^b F\|_{L^q(\RR)} 
&\lesssim R^{-\sigma} e^{-WR} \max_{j \in \{0,\dots,b\}} \| D^\sigma [(\cdot+iW)^j F(\cdot+iW)] \|_{L^q(\RR)} \\
&\leq R^{-\sigma} e^{-WR} \max_{\substack{ j \in \{0,\dots,b\} \\ k \in \{0,\dots,\sigma\} \\ \sigma \leq j+k}}  \| (\cdot+iW)^{j-\sigma+k} F^{(k)}(\cdot+iW) \|_{L^q(\RR)}\\
&\lesssim R^{-\sigma} e^{-WR} \max_{k \in \{0,\dots,\sigma\}}  \| (1+|\cdot|)^{b-\sigma+k} F^{(k)}(\cdot+iW) \|_{L^q(\RR)} \\
&\sim R^{-\sigma} e^{-WR} \| F(\cdot+iW) \|_{\AH^{\sigma,b-\sigma}_{q,q}},
\end{split}\]
which proves part \ref{en:paleywiener1} (in the last step Lemma \ref{lem:AHemb}\ref{en:AHemb_W} was used).

As for part \ref{en:paleywiener2}, it is sufficient to apply part \ref{en:paleywiener1} to $F_h$ in place of $F$ and $R = h-2$, and observe that, by Lemma \ref{lem:AHemb}\ref{en:AHemb_conv}, $\|F_h(\cdot +iW)\|_{\AH^{\sigma,b-\sigma}_{q,q}} \lesssim \|F(\cdot +iW)\|_{\AH^{\sigma,b-\sigma}_{q,q}}$ with an implicit constant independent of $h$.
\end{proof}

\section{A conditional multiplier theorem for sub-Laplacians with drift}\label{s:main}

Let $\Delta_X = \Delta - X$ be a sub-Laplacian with drift as described in Section \ref{s:preliminaries}, with $X|_e = \nabla_H \chi|_e$ for some nontrivial positive character $\chi$ on $G$.

Here we prove a multiplier theorem for the sub-Laplacian with drift $\Delta_X$.  The result is conditional, in the sense that it is proved under certain assumptions, that are essentially related to the existence of a differentiable $L^p$-functional calculus
for the sub-Laplacian without drift $\Delta$. In Section \ref{s:applications} we will show how these assumptions can be verified in a number of cases.

For notational simplicity, it is convenient to express our result in terms of the operators
\[
\DD = \sqrt{\Delta}, \qquad \DD_X = \sqrt{\Delta_X - b_X^2}.
\]
Note that the passage from $\Delta_X$ to $\DD_X$ corresponds to the change of variable \eqref{eq:change_var}.

\begin{assm}
There exist $\lebexp \in [2,\infty)$ and $\sigma \in (1/\lebexp,\infty)$, such that the following inequalities hold for all even functions $F : \RR \to \CC$ with $\supp \hat F \subseteq [-2,2]$.
\begin{enumerate}[label=(\Alph*)]
\item\label{en:ass_MH_diff} $\sup_{y \in B_\dist(e,1)} \int_{|x|_\dist \geq 2|y|_\dist} |k_{F(\DD)}(xy)-k_{F(\DD)}(x)| \di \mu (x) \leq C \| F \|_{\AH^{\sigma,-1/\lebexp}_{\lebexp,\infty}}$.
\item\label{en:ass_MH_annulus} $\sup_{0<r\leq 1} r \int_{|x|_\dist \geq r} |k_{F(\DD)}(x)| \di\mu(x) \leq C \| F \|_{\AH^{\sigma,-1/\lebexp}_{\lebexp,\infty}}$.
\end{enumerate} 
In addition, there exist $\thresh,\plnexp \in [0,\infty)$ and $W \in (0,\infty)$ such that, for all $h \in \NN \cap [3,\infty)$ and all even functions $F : \RR \to \CC$, if $\Rnon \cap \supp \hat F \subseteq [h-2,h]$, then
\begin{enumerate}[label=(\Alph*),resume]
\item\label{en:ass_fps_plancherel}
$\|\chi^{1/2} k_{F(\DD)}\|_{L^1(\mu)} \leq C e^{Wh} h^\thresh \| (1+|\cdot|)^\plnexp \, F \|_{L^\lebexp(\RR)}$.
\end{enumerate}
\end{assm}

Under the above assumptions, we are able to prove the following spectral multiplier theorem for $\Delta_X$. 

\begin{teo}\label{teo:main}
Suppose that Assumptions \ref{en:ass_MH_diff}, \ref{en:ass_MH_annulus}, \ref{en:ass_fps_plancherel} hold for some $\lebexp \in [2,\infty)$, $\sigma \in (1/\lebexp,\infty)$, $\thresh,\plnexp \in [0,\infty)$, $W \in (0,\infty)$.
Let $p \in [1,\infty] \setminus \{2\}$, and let $q \in [\lebexp,\infty)$ be defined by $1/q = |2/p-1|/\lebexp$. Suppose that $M \in \AcH_{q,\infty}^{s,-1/q,|2/p-1|W}$ for some $s \in \RR$ satisfying
\begin{equation}\label{eq:smoothness_order}
s > 2 \, |1/p-1/2| \, \max \{ \sigma, \thresh+1, \plnexp+1/\lebexp\}.
\end{equation}
\begin{enumerate}[label=(\roman*)]
\item\label{en:main_hardy} If $p=1$, then $M(\DD_X)$ extends to a bounded operator from $\hardy(\mu_{X})$ to $L^1(\mu_{X})$.

\item\label{en:main_bmo} If $p=\infty$, then $M(\DD_X)$ extends to a bounded operator from $L^\infty(\mu_{X})$ to $\bmo(\mu_{X})$.

\item\label{en:main_lp} If $p \in (1,\infty)$, then $M(\DD_X)$ extends to a bounded operator on $L^p(\mu_{X})$.
\end{enumerate}
\end{teo}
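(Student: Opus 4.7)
My plan is to establish part \ref{en:main_hardy} first, then deduce part \ref{en:main_bmo} by duality, and obtain part \ref{en:main_lp} by complex interpolation with the trivial $L^2$ bound. Throughout I exploit the conjugation identity $k_{F(\DD_X)} = \chi^{-1/2} k_{F(\DD)}$ from \eqref{eq:rel_cv_ops}, which transfers all computations to the drift-free kernel $k_{F(\DD)}$. For the endpoint $p=1$ I split $M = M_\text{loc} + \sum_{h \geq 3} M_h$ via the partition of unity on the Fourier side generated by $\omega$ and the $\omega_h$, so that $\widehat{M_\text{loc}}$ is supported in $[-2,2]$ while each $\widehat{M_h}$ is supported in $[h-2,h] \cup [-h,-h+2]$.

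For the global part, Assumption \ref{en:ass_fps_plancherel} combined with Lemma \ref{lem:paleywiener}\ref{en:paleywiener2} (applied with $b = \plnexp$, $\sigma = s$, $q = \lebexp$) gives $\|\chi^{1/2} k_{M_h(\DD)}\|_{L^1(\mu)} \lesssim h^{\thresh - s}\|M\|_{\AcH^{s,\plnexp-s,W}_{\lebexp,\lebexp}}$, and the embedding from Lemma \ref{lem:AHemb}\ref{en:AHemb_rtau} (requiring $s > \plnexp + 1/\lebexp$) bounds this further by $\|M\|_{\AcH^{s,-1/\lebexp,W}_{\lebexp,\infty}}$. The condition $s > \thresh + 1$ then makes the sum over $h$ converge. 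An elementary Fubini computation (using right-invariance of $\mu$ and the character property of $\chi$) shows that any left-invariant convolution operator $T_k : f \mapsto f * k$ satisfies $\|T_k\|_{L^1(\mu_X) \to L^1(\mu_X)} \leq \|\chi \cdot k\|_{L^1(\mu)}$; applied to $k = k_{M_\text{glob}(\DD_X)} = \chi^{-1/2} k_{M_\text{glob}(\DD)}$, so that $\chi \cdot k = \chi^{1/2} k_{M_\text{glob}(\DD)}$, this yields $L^1(\mu_X)$ boundedness of $M_\text{glob}(\DD_X)$, and hence boundedness from $\hardy(\mu_X)$ to $L^1(\mu_X)$.

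For the local part, finite propagation \eqref{eq:fps} forces $k_{M_\text{loc}(\DD_X)}$ to be supported in $B_\dist(e,2)$, so the integral kernel $K^{\mu_X}_{M_\text{loc}(\DD_X)}(x,y)$ vanishes whenever $\dist(x,y) > 2$; this makes \eqref{eq:hormander-mv} automatic, and the $L^2(\mu_X)$-boundedness of $M_\text{loc}(\DD_X)$ follows from Lemma \ref{lem:AHemb}\ref{en:AHemb_C2}. To verify \eqref{eq:hormander-cmm} I would write $K^{\mu_X}(x,y) = \chi(x)^{-1/2}\,\psi(y)\, k_0(y^{-1}x)$ with $\psi(y) = m(y)\chi(y)^{-1/2}$ and $k_0 = k_{M_\text{loc}(\DD)}$, and decompose $K^{\mu_X}(x,y) - K^{\mu_X}(x,z) = \chi(x)^{-1/2}\bigl[\psi(y)(k_0(y^{-1}x) - k_0(z^{-1}x)) + (\psi(y) - \psi(z))\, k_0(z^{-1}x)\bigr]$. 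For $B = B_\dist(c,r) \in \Balls_1$ and $y,z \in B$, the change of variable $u = y^{-1}x$ in the first term produces the cancellation $\psi(y)\chi(y)^{1/2} m(y)^{-1} = 1$, and together with the uniform boundedness of $\chi^{1/2}$ on the bounded support of $k_0(u) - k_0(v^{-1}u)$ (with $v = y^{-1}z$, $|v|_\dist \leq 2r$) it reduces that term to $\int_{|u|_\dist > r} |k_0(u) - k_0(v^{-1}u)|\di\mu(u)$, which is a \emph{left-translation} version of Assumption \ref{en:ass_MH_diff}. The second piece, via the Lipschitz bound $|\psi(y) - \psi(z)| \lesssim r$ and the same substitution, reduces to $\lesssim r \int_{|u|_\dist > r} |k_0(u)|\di\mu(u)$, which is controlled directly by Assumption \ref{en:ass_MH_annulus}. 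The smoothness $s > \sigma$ together with Lemma \ref{lem:AHemb}\ref{en:AHemb_sigma} and \ref{en:AHemb_conv} closes the estimate in terms of $\|M\|_{\AcH^{s,-1/\lebexp,W}_{\lebexp,\infty}}$, and Proposition \ref{prp:hardy_hormander} concludes.

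Part \ref{en:main_bmo} is then obtained by dualizing part \ref{en:main_hardy}, since $\bmo(\mu_X) = \hardy(\mu_X)^*$, the operator $\bar M(\DD_X)$ is the $L^2(\mu_X)$-adjoint of $M(\DD_X)$, and $\|\bar M\|_{\AcH^{s,-1/\lebexp,W}_{\lebexp,\infty}} = \|M\|_{\AcH^{s,-1/\lebexp,W}_{\lebexp,\infty}}$. For part \ref{en:main_lp} I would interpolate the endpoint result with $\|M(\DD_X)\|_{L^2 \to L^2} \leq \|M\|_\infty$, combining Theorem \ref{thm:hardy_interpol} on the function-space side with Lemma \ref{lem:AcHinterpol} (in its upper complex interpolation form, to accommodate the parameter $r = \infty$) on the multiplier side, via an analytic family of multipliers of the type introduced in \cite{anker}; this is the mechanism that produces the sharp $p$-dependent threshold in \eqref{eq:smoothness_order}. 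The most delicate step is the verification of \eqref{eq:hormander-cmm}: reconciling the \emph{left}-translation form $k_0(u) - k_0(v^{-1}u)$ of the kernel difference with the \emph{right}-translation form $k(xy) - k(x)$ of Assumption \ref{en:ass_MH_diff} (via a symmetry/involution argument using the self-adjointness of $\DD$ on $L^2(\mu)$, or equivalently by recasting the assumption on the right side of the group) is the main technical hurdle, together with carrying the interpolation through sharply in the shrinking-strip setting.
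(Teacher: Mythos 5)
Your overall architecture coincides with the paper's: the same Fourier-side decomposition $M=M_\ell+\sum_{h\ge 3}P_h$, the global part handled via Assumption \ref{en:ass_fps_plancherel} and Lemma \ref{lem:paleywiener}\ref{en:paleywiener2} together with the Fubini bound $\|f*k\|_{L^1(\mu_X)}\le\|f\|_{L^1(\mu_X)}\|\chi k\|_{L^1(\mu)}$, the local part via Proposition \ref{prp:hardy_hormander}, duality for $p=\infty$, and bilinear complex interpolation for $1<p<2$. Two steps, however, are left genuinely unresolved.

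First, the left/right translation mismatch in \eqref{eq:hormander-cmm}, which you correctly flag as the main hurdle but do not close. The paper's resolution: first reduce to multipliers with $\overline{M(z)}=M(\bar z)$ by splitting $M=M_R+iM_I$, so that $M_\ell(\DD_X)$ is self-adjoint on $L^2(\mu_X)$ and hence $|L_X(x,y)|=|L_X(y,x)|=|\ell_X(x^{-1}y)|\,m(x)\chi^{-1}(x)$ by \eqref{eq:rel_int_kernels}. The factor $m(x)\chi^{-1}(x)$ cancels against $\ndi\mu_X(x)=\chi(x)\ndi\mu(x)$ up to the left Haar measure, and a left translation by $c_B^{-1}$ plus inversion turns the integral into $\int_{(B_\dist(e,2r_B))^c}|\ell_X(xy)-\ell_X(x)|\di\mu(x)$ with $y\in B_\dist(e,r_B)$ --- exactly the right-translation form of Assumption \ref{en:ass_MH_diff} (after peeling off the $\chi^{-1/2}$ factor, whose Lipschitz variation is absorbed by Assumption \ref{en:ass_MH_annulus}). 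Without this reduction your kernel difference $k_0(u)-k_0(v^{-1}u)$ is not what the Assumptions control, so this step must actually be carried out.

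Second, the interpolation for $p\in(1,2)$ cannot be done ``in one shot'' on the scale $\AcH^{s,-1/q,\cdot}_{q,\infty}$: Lemma \ref{lem:AcHinterpol} is stated only for $r=q$, and no upper-complex-interpolation identity is available for the holomorphic spaces with $r=\infty$ and varying strip widths. The paper circumvents this by interpolating the local and global parts \emph{separately} (Proposition \ref{prp:lp_boundedness}\ref{en:localpart_int}--\ref{en:globalpart_int}): the local part lives in the non-holomorphic spaces $\AH_{q,\infty}^{\cdot,\cdot}$, where the upper interpolation of Lemma \ref{lem:AHinterpol} applies, while the global part is first embedded into $\AcH^{s,\tau-s,\cdot}_{q,q}$ (Lemma \ref{lem:AHemb}\ref{en:AHemb_rtau}, using $-1/q>\tau-s$) and then interpolated with $r=q$ and width $0$ at the $L^2$ endpoint; the two are recombined only at the end. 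Your plan as written relies on an interpolation identity that is not among the stated lemmas, and the careful choice of auxiliary parameters $(q_0,s_0,s_1,\tau)$ needed to land exactly on the threshold \eqref{eq:smoothness_order} is also where the quantitative content of the theorem is produced.
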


The above result is stated in terms of an $L^q$ Sobolev condition of fractional order on the multiplier, where $q$ is the exponent obtained by interpolation between $\lebexp$ and $\infty$, corresponding to the choice of $p$ between $1$ and $2$ (or $\infty$ and $2$); in particular, for the endpoint results $p=1,\infty$ the condition has the same  $L^\lebexp$ type that appears in the Assumptions.
For the sake of clarity, we also state the result in a simplified form, involving a pointwise condition of integer order on the multiplier.

\begin{coro}\label{cor:main}
Suppose that Assumptions \ref{en:ass_MH_diff}, \ref{en:ass_MH_annulus}, \ref{en:ass_fps_plancherel} hold for some $\lebexp \in [2,\infty)$, $\sigma \in (1/\lebexp,\infty)$, $\thresh,\plnexp \in [0,\infty)$, $W \in (0,\infty)$.
Let $p \in [1,\infty] \setminus \{2\}$, and set $W_p = 2|1/p-1/2|W$. Suppose that $M \in H^\infty(\Sigma_{W_p};N)$ for some $N \in \NN$ satisfying
\[
N > 2 \, |1/p-1/2| \, \max \{ \sigma, \thresh+1, \plnexp+1/\lebexp\}.
\]
Then the conclusions \ref{en:main_hardy}, \ref{en:main_bmo}, \ref{en:main_lp} of Theorem \ref{teo:main} hold.
\end{coro}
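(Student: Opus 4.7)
The plan is to derive the corollary directly from Theorem \ref{teo:main} by embedding the pointwise class $H^\infty(\Sigma_{W_p};N)$ into the fractional $L^q$-Sobolev class $\AcH^{s,-1/q,W_p}_{q,\infty}$ for a suitable non-integer order $s < N$ that still satisfies the smoothness requirement \eqref{eq:smoothness_order}. The point is that the hypothesis on $N$ is strict, which leaves room to trade a bit of regularity for a shift from $L^\infty$ to $L^q$.

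First I would note the identification of the strip widths: $W_p = 2|1/p-1/2|W = |2/p-1|W$, so the strip $\Sigma_{W_p}$ appearing in the hypothesis is exactly the one used in Theorem \ref{teo:main}. Then, using the strict inequality on $N$, I would pick $s \in \RR$ with
\[
2|1/p-1/2|\max\{\sigma,\thresh+1,\plnexp+1/\lebexp\} < s < N,
\]
so that $s$ is admissible for \eqref{eq:smoothness_order}.

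The key step is the embedding $H^\infty(\Sigma_{W_p};N) \hookrightarrow \AcH^{s,-1/q,W_p}_{q,\infty}$. Given $M \in H^\infty(\Sigma_{W_p};N)$, the function $M$ is even, bounded (hence of polynomial growth), continuous on $\overline{\Sigma_{W_p}}$ and holomorphic in the interior, so it lies in the ambient class used to define $\AcH^{s,-1/q,W_p}_{q,\infty}$. Moreover, the very definition of the norm on $H^\infty(\Sigma_{W_p};N)$ gives that the boundary traces $M(\cdot\pm iW_p)$ belong to $\AC^{N,0}$ with
\[
\|M(\cdot\pm iW_p)\|_{\AC^{N,0}} \leq \|M\|_{H^\infty(\Sigma_{W_p};N)}.
\]
Since $s < N$, Lemma \ref{lem:AHemb}\ref{en:AHemb_C1} then yields $M(\cdot\pm iW_p) \in \AH^{s,-1/q}_{q,\infty}$, which is exactly the defining condition for membership of $M$ in $\AcH^{s,-1/q,W_p}_{q,\infty}$, with a corresponding norm bound.

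With this embedding established, applying Theorem \ref{teo:main} to $M$ with this choice of $s$ and $q$ yields the three conclusions at once. There is no substantial obstacle: the entire argument is a soft reduction, resting on (i) matching the strip widths, (ii) the slack in the strict inequality $N > 2|1/p-1/2|\max\{\sigma,\thresh+1,\plnexp+1/\lebexp\}$, and (iii) the embedding $\AC^{N,0} \subseteq \AH^{s,-1/q}_{q,\infty}$ for $s < N$ recorded in Lemma \ref{lem:AHemb}.
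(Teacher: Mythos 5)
Your proposal is correct and follows essentially the same route as the paper: choose $s$ strictly between the required lower bound and $N$, invoke the embedding $\AC^{N,0}\subseteq\AH^{s,-1/q}_{q,\infty}$ from Lemma \ref{lem:AHemb}\ref{en:AHemb_C1} applied to the boundary traces to get $\|M\|_{\AcH^{s,-1/q,W_p}_{q,\infty}}\lesssim\|M\|_{H^\infty(\Sigma_{W_p};N)}$, and then apply Theorem \ref{teo:main}. Your write-up merely makes explicit the boundary-trace step that the paper leaves implicit.
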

\begin{proof}
If we choose $s \in \RR$ so that
\[
N > s > 2 \, |1/p-1/2| \, \max \{ \sigma, \thresh+1, \plnexp+1/\lebexp\},
\]
then the result follows immediately from Theorem \ref{teo:main}, together with the fact that
\[
\| M \|_{\AcH^{s,-1/q,W_p}_{q,\infty}} \lesssim \| M \|_{H^\infty(\Sigma_{W_p};N)}
\]
(see Lemma \ref{lem:AHemb}\ref{en:AHemb_C1}).
\end{proof}

Before entering the proof of Theorem \ref{teo:main}, some remarks on the above results and assumptions are in order.

\begin{rem}
The above results are stated in terms of the functional calculus of $\DD_X$. 
In order to relate them more directly to the functional calculus of $\Delta_X$, the change of variable \eqref{eq:change_var} can be used. Indeed, if $M$ is a bounded holomorphic function on a domain containing $[b_X^2,\infty)$, then \eqref{eq:change_var}
defines an even bounded holomorphic function $M_X$ whose domain contains $\RR$ and
\[
M_X(\DD_X) = M(\Delta_X).
\]
For every $p\in [1,\infty]\setminus\{2\}$, let $P_{X,p}$ be the parabolic region defined by \eqref{eq:parabola}.
Let the number $W_{X,p}$ be defined by
\begin{equation}\label{eq:strip_width}
W_{X,p} 
= |1/p-1/2| \, |X| .
\end{equation}
Then, for all $M \in H^\infty(P_{X,p})$, the function $M_X$ defined by 
\eqref{eq:change_var}
lies in $H^\infty(\Sigma_{W_{X,p}})$. 
Hence Theorem \ref{teo:main} and Corollary \ref{cor:main}, applied with $W = b_X$, show that functions $M \in H^\infty(P_{X,p})$ satisfying suitable differential conditions are $L^p$ spectral multipliers of $\Delta_X$ when $1 < p < \infty$, with a corresponding Hardy or BMO endpoint result when $p \in \{1,\infty\}$.
\end{rem}

\begin{rem}\label{rem:CZ}
Assumption \ref{en:ass_MH_diff} can be thought of as a version of the H\"ormander condition for Calder\'on--Zygmund singular integral operators, applied to the operator $F(\DD)$. A stronger version of the assumption is the following gradient estimate, resembling Assumption \ref{en:ass_MH_annulus}:
\begin{enumerate}[label=(A$'$)]
\item
$\sup_{0 < r \leq 1} r \int_{|x|_\dist \geq r} |\nabla_H k_{F(\DD)}(x)| \di\mu(x) \leq C \| F \|_{\AH^{\sigma,-1/\lebexp}_{\lebexp,\infty}}$.
\end{enumerate}
Proving that $F(\DD)$ is a Calder\'on--Zygmund operator whenever $F$ satisfies a suitable scale-invariant smoothness condition is a usual strategy in the proof of a multiplier theorem of Mihlin--H\"ormander type for $\Delta$. It is to be noticed, however, that Assumption \ref{en:ass_MH_diff} is a somewhat weaker requirement, in that
the norm of $F$ in the right-hand side corresponds to a nonhomogeneous Mihlin--H\"ormander condition on the multiplier. Moreover, due to the support condition $\supp \hat F \subseteq [-2,2]$ and finite propagation speed, the integrals in Assumptions  \ref{en:ass_MH_diff} and \ref{en:ass_MH_annulus} are actually restricted to a fixed bounded neighbourhood of the identity. Note also that, through a decomposition into dyadic annuli, Assumption \ref{en:ass_MH_annulus} follows from the stronger estimate
\begin{enumerate}[label=(B$'$)]
\item\label{en:ass_MH_annulus2} $\sup_{0<r\leq 1} \int_{r \leq |x|_\dist < 2r} |k_{F(\DD)}(x)| \di\mu(x) \leq C \| F \|_{\AH^{\sigma,-1/\lebexp}_{\lebexp,\infty}}$
\end{enumerate}
and uniform control of the integral over dyadic annuli is another common estimate for Calder\'on--Zygmund kernels (here it is just required for small dyadic annuli).
\end{rem}

\begin{rem}\label{rem:plancherel}
In view of Lemma \ref{lem:char_estimate} and finite propagation speed \eqref{eq:fps}, Assumption \ref{en:ass_fps_plancherel} with $W = |X|/2$ follows from
\begin{enumerate}[label=(C$'$)]
\item\label{en:ass_fps_plancherel2}
$\|k_{F(\DD)}\|_{L^1(\mu)} \leq C h^\thresh \| (1+|\cdot|)^\plnexp \, F \|_{L^\lebexp(\RR)}$.
\end{enumerate}
By replacing \ref{en:ass_fps_plancherel} with \ref{en:ass_fps_plancherel2}, one would obtain a set of assumptions that depend only on the operator $\Delta$ and not on the drift $X$. Moreover, in the case $G$ has polynomial growth of degree $d_\infty$, Assumption \ref{en:ass_fps_plancherel2} with $\thresh=d_\infty$ follows by finite propagation speed \eqref{eq:fps} and the Cauchy--Schwarz inequality from the following estimate, involving the Plancherel measure $\sigma_\Delta$ defined in \eqref{eq:plancherel}:
\begin{enumerate}[label=(C$''$)]
\item\label{en:ass_fps_plancherel3}
$\|F(\sqrt{\cdot})\|_{L^2(\sigma_\Delta)} \leq C \| (1+|\cdot|)^\plnexp \, F \|_{L^\lebexp(\RR)}$.
\end{enumerate}
As we will see in Section \ref{s:applications}, such a relation between Assumption \ref{en:ass_fps_plancherel} and Plancherel-type estimates holds also in some groups of exponential growth (where the role of the dimension at infinity $d_\infty$ in determining the value of $\thresh$ may be played by some ``pseudo-dimension'').
On the other hand, obtaining Assumption \ref{en:ass_fps_plancherel} via Lemma \ref{lem:char_estimate} and the Cauchy--Schwarz inequality from \ref{en:ass_fps_plancherel2} or \ref{en:ass_fps_plancherel3} need not yield the lowest possible values of $\thresh$ and $\plnexp$ (which in turn determine the order of the smoothness condition \eqref{eq:smoothness_order} in the multiplier theorem), whence our preference for Assumption \ref{en:ass_fps_plancherel} over the alternative formulations.
\end{rem}

\begin{rem}\label{rem:linfty_lq}
In contexts such as groups of polynomial growth, one may encounter Plancherel-type estimates of the form \ref{en:ass_fps_plancherel3}, where the $L^\lebexp$ norm in the right-hand side is replaced by an $L^\infty$ norm, that is,
\begin{enumerate}[label=(C$'''$)]
\item\label{en:ass_fps_plancherel4}
$\|k_{F(\sqrt{\cdot})}\|_{L^2(\sigma_\Delta)} \leq C \| (1+|\cdot|)^\plnexp \, F \|_{L^\infty(\RR)}$
\end{enumerate}
(see, e.g., the discussion in \cite[Section 3]{DOS}). Under the support condition $\Rnon \cap \supp \hat F \subseteq [h-2,h]$, however, \ref{en:ass_fps_plancherel4} implies \ref{en:ass_fps_plancherel3}; indeed, if the even function $\alpha \in C^\infty_c(\RR)$ is chosen so that $\alpha|_{[-1,1]} \equiv 1$, and $\alpha_h(t)= \alpha(t-h+1) + \alpha(t+h-1)$, then
\[\begin{split}
\| (1+|\cdot|)^\plnexp \, F \|_{L^\infty(\RR)} &\leq \| (1+|\cdot|)^\plnexp \, \check\alpha_h \|_{L^{\lebexp'}(\RR)} \| (1+|\cdot|)^\plnexp \, F \|_{L^\lebexp(\RR)} \\
&\lesssim \|\alpha\|_{\AH^\plnexp_{\lebexp'}} \| (1+|\cdot|)^\plnexp \, F \|_{L^\lebexp(\RR)}
\end{split}\]
by Young's inequality and translation-invariance of $\AH^\plnexp_{\lebexp'}$, where $\check\alpha_h$ denotes the inverse Fourier transform of $\alpha_h$. Conversely, by the Cauchy--Schwarz inequality,
\[
\| (1+|\cdot|)^\plnexp \, F \|_{L^\lebexp(\RR)} \leq C_{\plnexp,\plnexp'} \| (1+|\cdot|)^{\plnexp'} \, F \|_{L^\infty(\RR)}
\]
whenever $\plnexp' > \plnexp+1/\lebexp$.
In view of this, stating Assumption \ref{en:ass_fps_plancherel} in terms of an $L^\lebexp$ norm on the right-hand side, instead of an $L^\infty$-norm, does not seem to cause a loss of generality: namely, instead of working with $\lebexp=\infty$ directly, it is enough to consider $\lebexp < \infty$ arbitrarily large. Similar considerations allow one to treat the case where the $L^\lebexp$ Sobolev norms in the right-hand side of Assumptions \ref{en:ass_MH_diff} and \ref{en:ass_MH_annulus} are replaced with suitable $L^\infty$ Sobolev norms (cf.\ \eqref{eq:Linftycomp} below).
\end{rem}

The rest of the section is devoted to the proof of Theorem \ref{teo:main}. Note that, by duality, it is enough to consider the case $p < 2$. The first step is proving the endpoint $p=1$, which is contained in Proposition \ref{prp:hardy_boundedness}\ref{en:bothparts} below.

Let $\omega$ and $\omega_h$ be defined as in \eqref{eq:omega}-\eqref{eq:omegah} and define $\eta : \RR \to \RR$ by $\hat\eta=\omega+\omega_2$.

\begin{prop}\label{prp:hardy_boundedness}
Suppose that Assumptions \ref{en:ass_MH_diff}, \ref{en:ass_MH_annulus}, \ref{en:ass_fps_plancherel} hold for some $\lebexp \in [2,\infty)$, $\sigma \in (1/\lebexp,\infty)$, $\thresh,\plnexp \in [0,\infty)$, $W \in (0,\infty)$.
Let $M : \RR \to \CC$ be an even bounded Borel function and decompose it as $M = M_\ell + M_g$, where $M_\ell = \eta * M$.
\begin{enumerate}[label=(\roman*)]
\item\label{en:localpart}
If $M \in \AH^{\sigma,-1/\lebexp}_{\lebexp,\infty}$, 
 then $M_\ell(\DD_X)$ extends to a bounded operator from $\hardy(\mu_{X})$ to $L^1(\mu_{X})$ and
\[
\|M_\ell(\DD_X)\|_{\hardy(\mu_{X}) \to L^1(\mu_{X})} \leq C \| M \|_{\AH^{\sigma,-1/\lebexp}_{\lebexp,\infty}}.
\]
\item\label{en:globalpart}
If $M \in \AcH^{s,\plnexp-s,W}_{\lebexp,\lebexp}$ for some $s > \thresh+1$, 
 then $M_g(\DD_X)$ extends to a bounded operator on $L^1(\mu_{X})$ and
\[
\|M_g(\DD_X)\|_{L^1(\mu_{X}) \to L^1(\mu_{X})} \leq C_{s} \| M \|_{\AcH^{s,\plnexp-s,W}_{\lebexp,\lebexp}}.
\]
\item\label{en:bothparts} If $M \in \AcH^{s,-1/\lebexp,W}_{\lebexp,\infty}$ for some $s$ satisfying
\[
s \geq \sigma, \qquad s > \thresh+1, \qquad s > \plnexp+1/\lebexp,
\]
then $M(\DD_X)$ extends to a bounded operator from $\hardy(\mu_{X})$ to $L^1(\mu_{X})$ and
\[
\|M(\DD_X)\|_{\hardy(\mu_{X}) \to L^1(\mu_{X})} \leq C_s \| M \|_{\AcH^{s,-1/\lebexp,W}_{\lebexp,\infty}}.
\]
\end{enumerate}
\end{prop}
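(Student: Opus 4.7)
The plan is to prove parts~(i)--(iii) in order, using the cutoff $\eta$ to split $M$ into a local part $M_\ell=\eta*M$ and a global part $M_g=M-M_\ell$, in the spirit of \cite{HMM,anker}. The crucial structural fact is that $\hat\eta=\omega+\omega_2$ is supported in $[-2,2]$, so by finite propagation speed \eqref{eq:fps} the kernel $k_{M_\ell(\DD)}$ is supported in $B_\dist(e,2)$; via \eqref{eq:rel_cv_ops}, so is $k_{M_\ell(\DD_X)}=\chi^{-1/2}k_{M_\ell(\DD)}$. Combining \eqref{eq:int_kernel_meas}, \eqref{eq:conv_int_kernel} with the character identity $\chi(y^{-1}x)=\chi(x)\chi(y)^{-1}$ yields
\[
K^{\mu_X}_{M_\ell(\DD_X)}(x,y)=\chi^{-1/2}(x)\chi^{-1/2}(y)m(y)\,k_{M_\ell(\DD)}(y^{-1}x),
\]
which is an integral kernel supported in $\{\dist(x,y)\leq 2\}$. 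For part~(i), I would apply Proposition~\ref{prp:hardy_hormander}: the $L^2(\mu_X)$-boundedness of $M_\ell(\DD_X)$ is immediate from the spectral theorem, since $M\in\AH^{\sigma,-1/\lebexp}_{\lebexp,\infty}\subseteq L^\infty(\RR)$ by Lemma~\ref{lem:AHemb}\ref{en:AHemb_C2} and convolution with the Schwartz function $\eta$ preserves boundedness; the support condition makes $N^X_2=0$ trivially. For the $N^X_1$ condition, with $y,z$ in a ball $B=B_\dist(c_B,r_B)\in\Balls_1$, I would split the difference of kernels into a principal term involving $k_{M_\ell(\DD)}(y^{-1}x)-k_{M_\ell(\DD)}(z^{-1}x)$ and a secondary error coming from the variation of the smooth factors $\chi^{\pm 1/2}$ and $m$. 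Using left-invariance, local comparability of $\chi,m$ to constants on compact sets (via Lemma~\ref{lem:char_estimate}), and the formula $(L_z)_*\mu=m(z)\mu$, the principal term reduces to an expression controlled by Assumption~\ref{en:ass_MH_diff} and the secondary term to one controlled by Assumption~\ref{en:ass_MH_annulus}, with constants uniform in $B$. Finally, $\|M_\ell\|_{\AH^{\sigma,-1/\lebexp}_{\lebexp,\infty}}\lesssim\|M\|_{\AH^{\sigma,-1/\lebexp}_{\lebexp,\infty}}$ by Lemma~\ref{lem:AHemb}\ref{en:AHemb_conv}.

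For part~(ii), I decompose $M_g$ dyadically. Since $1-(\omega+\omega_2)=\sum_{h\geq 3}\omega_h$, one has $M_g=\sum_{h\geq 3}M_h$ with $\hat M_h=\omega_h\hat M$, each $M_h$ even and satisfying $\Rnon\cap\supp\hat M_h\subseteq[h-2,h]$. A direct computation using right-invariance of $\mu$ and multiplicativity of $\chi$ shows that any left-invariant operator $T$ with convolution kernel $k_T$ satisfies $\|T\|_{L^1(\mu_X)\to L^1(\mu_X)}\leq\|\chi\,k_T\|_{L^1(\mu)}$. Applied to $T=M_h(\DD_X)$ and combined with \eqref{eq:rel_cv_ops} and Assumption~\ref{en:ass_fps_plancherel},
\[
\|M_h(\DD_X)\|_{L^1(\mu_X)\to L^1(\mu_X)}\leq\|\chi^{1/2}k_{M_h(\DD)}\|_{L^1(\mu)}\lesssim e^{Wh}h^\thresh\|(1+|\cdot|)^\plnexp M_h\|_{L^\lebexp(\RR)}.
\]
Lemma~\ref{lem:paleywiener}\ref{en:paleywiener2} with $b=\plnexp$, $\sigma=s$, $q=\lebexp$ supplies the matching decay $\|(1+|\cdot|)^\plnexp M_h\|_{L^\lebexp(\RR)}\lesssim h^{-s}e^{-Wh}\|M\|_{\AcH^{s,\plnexp-s,W}_{\lebexp,\lebexp}}$; the exponentials cancel, leaving $\|M_h(\DD_X)\|_{L^1\to L^1}\lesssim h^{\thresh-s}\|M\|_{\AcH^{s,\plnexp-s,W}_{\lebexp,\lebexp}}$. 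The hypothesis $s>\thresh+1$ ensures convergence of $\sum_{h\geq 3}h^{\thresh-s}$ and yields part~(ii).

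For part~(iii), I combine (i) applied to $M_\ell$ with (ii) applied to $M_g$. The hypothesized $\AcH^{s,-1/\lebexp,W}_{\lebexp,\infty}$ norm of $M$ dominates the $\AH^{\sigma,-1/\lebexp}_{\lebexp,\infty}$ norm needed by (i), via restriction to the real axis in \eqref{eq:AcHthreelines} and Lemma~\ref{lem:AHemb}\ref{en:AHemb_sigma} (since $s\geq\sigma$); it also dominates the $\AcH^{s,\plnexp-s,W}_{\lebexp,\lebexp}$ norm needed by (ii), since the strict inequality $s>\plnexp+1/\lebexp$ gives $\AH^{s,-1/\lebexp}_{\lebexp,\infty}\subseteq\AH^{s,\plnexp-s}_{\lebexp,\lebexp}$ by Lemma~\ref{lem:AHemb}\ref{en:AHemb_rtau}, which transfers to the holomorphic spaces via \eqref{eq:AcHthreelines}. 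The continuous inclusion $\hardy(\mu_X)\hookrightarrow L^1(\mu_X)$ with norm $1$ then upgrades the $L^1\to L^1$ bound on $M_g(\DD_X)$ to an $\hardy\to L^1$ bound, which combines with the $\hardy\to L^1$ bound on $M_\ell(\DD_X)$ to give part~(iii).

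The main obstacle I anticipate is the verification of $N^X_1$ in part~(i): one must carefully isolate the genuine Calder\'on--Zygmund difference, which is the only contribution large enough to require Assumption~\ref{en:ass_MH_diff}, from the commutator-like error coming from differentiating the weights $\chi^{\pm 1/2}$ and $m$, for which only the weaker size bound of Assumption~\ref{en:ass_MH_annulus} is available. The compact support of $k_{M_\ell(\DD)}$ in $B_\dist(e,2)$, together with the pointwise growth estimate of Lemma~\ref{lem:char_estimate}, is what prevents the exponential growth of $\chi$ from leaking into the final estimate and is essential to obtaining constants uniform in the ball centre~$c_B$.
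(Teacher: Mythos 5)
Your proposal is correct and follows essentially the same route as the paper's proof: Proposition \ref{prp:hardy_hormander} with $N_2^X=0$ from finite propagation speed and $N_1^X$ split into a principal difference controlled by Assumption \ref{en:ass_MH_diff} plus a commutator error (the Lipschitz variation of $\chi^{-1/2}$ supplying the factor $r_B$ needed for Assumption \ref{en:ass_MH_annulus}) for part \ref{en:localpart}; the dyadic decomposition $\hat M_h=\omega_h\hat M$, the identity $\|k_{M_h(\DD_X)}\|_{L^1(\mu_X)}=\|\chi^{1/2}k_{M_h(\DD)}\|_{L^1(\mu)}$, Assumption \ref{en:ass_fps_plancherel} and Lemma \ref{lem:paleywiener}\ref{en:paleywiener2} for part \ref{en:globalpart}; and the embeddings from \eqref{eq:AcHthreelines} and Lemma \ref{lem:AHemb}\ref{en:AHemb_rtau} for part \ref{en:bothparts}. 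The only cosmetic difference is that the paper first reduces to $\overline{M(z)}=M(\bar z)$ so as to write the integral kernel via self-adjointness as a function of $x^{-1}y$ times weights in the integration variable only, whereas you compute $K^{\mu_X}$ directly; both work.
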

\begin{proof}
By writing $M = M_R + i M_I$, where $M_R(z) = (M(z) + \overline{M(\bar z)})/2$, and noting that 
$M_R$ and $M_I$ belong to any of the spaces $\AH^{\sigma,-1/\lebexp}_{\lebexp,\infty},\AcH^{s,\plnexp-s,W}_{\lebexp,\lebexp},\AcH^{s,-1/\lebexp,W}_{\lebexp,\infty}$ whenever $M$ does, we see that we may assume that $\overline{M(z)} = M(\bar z)$, so in particular the operators
$M(\DD_X),M_\ell(\DD_X),M_g(\DD_X)$ 
are self-adjoint on $L^2(\mu_X)$. 

\bigskip

\ref{en:localpart}.
We are going to show that $M_\ell(\DD_X)$ satisfies the conditions of Proposition \ref{prp:hardy_hormander}. Note first that, since $M \in \AH^{\sigma,-1/\lebexp}_{\lebexp,\infty}$ and $\sigma > 1/\lebexp$,
\begin{equation}\label{eq:local_embeddings}
\|M_\ell\|_\infty \lesssim \| M_\ell \|_{\AH^{\sigma,-1/\lebexp}_{\lebexp,\infty}} \lesssim \| M \|_{\AH^{\sigma,-1/\lebexp}_{\lebexp,\infty}} < \infty,
\end{equation}
by parts \ref{en:AHemb_C2} and \ref{en:AHemb_conv} of Lemma \ref{lem:AHemb}. In particular $M_\ell(\DD_X)$ is bounded on $L^2(\mu_X)$ and
\[
\|M_\ell(\DD_X)\|_{L^2(\mu_X) \to L^2(\mu_X)} \lesssim \| M \|_{\AH^{\sigma,-1/\lebexp}_{\lebexp,\infty}}.
\]

Set $\ell = k_{M_\ell(\DD)}$ and $\ell_X = k_{M_\ell(\DD_X)}$, and 
note that
\begin{equation}\label{eq:rel_conv_kernels}
\ell_X = \chi^{-1/2} \ell
\end{equation}
by \eqref{eq:rel_cv_ops}.
Since $\hat M_\ell = \hat\eta \hat{M}$ and $\hat\eta$ is supported in $[-2,2]$, by finite propagation speed \eqref{eq:fps} we conclude that $\ell$ and $\ell_X$ are supported in $B_\dist(e,2)$. 

By \eqref{eq:int_kernel_meas} and \eqref{eq:conv_int_kernel} and self-adjointness of $M_\ell(\DD_X)$, it is not difficult to check 
that the integral kernel $L_X = K_{M_\ell(\DD_X)}^{\mu_X}$ with respect to $\mu_X$ of the operator $M_\ell(\DD_X)$
satisfies
\begin{equation}\label{eq:rel_int_kernels}
\overline{L_X(x,y)} =
L_X (y,x)=\ell_X(x^{-1}y) \, m(x) \, \chi^{-1}(x).
\end{equation}
Moreover, since $\ell$ and $\ell_X$ are supported in $B_\dist(e,2)$, the integral 
kernel
$L_X$ 
is
 supported in
$\{(x,y)\in G\times G :\, \dist(x,y)\leq 2\}$. Thus 
\[
 \sup_{y\in G} \int_{(B_\dist(y,2))^c}|L_X(x,y)| \di\mu_X(x) =0 ,
\]
and therefore the condition \eqref{eq:hormander-mv} is trivially satisfied, i.e., $N_2^X(M_\ell(\DD_X)) = 0$.

Let $B=B_\dist (c_B,r_B)$, $r_B\leq 1$. Note that, by \eqref{eq:rel_int_kernels},
\[\begin{aligned}
&\sup_{y,z \in B} \int_{(2B)^c}|L_{X}(x,y)-L_{X}(x,z)| \di\mu_X(x) \\
&\leq  2 \sup_{y \in B} \int_{(2B)^c}|L_{X}(x,y)-L_{X}(x,c_B)| \di\mu_X(x) \\
&= 2 \sup_{y \in B} \int_{(2B)^c}|\ell_X(x^{-1}y)-\ell_X(x^{-1} c_B)|m(x)\di \mu(x) \\
&= 2 \sup_{y \in B_\dist(e,r_B)} \int_{(B_\dist(e,2r_B))^c}|\ell_X(xy)-\ell_X(x)| \di \mu(x) 
\end{aligned}\]
and moreover, for all $y \in B_\dist(e,r_B)$, by \eqref{eq:rel_conv_kernels}, the triangle inequality and the support conditions we deduce that
\[\begin{aligned}
&\int_{(B_\dist(e,2r_B))^c}|\ell_X(xy)-\ell_X(x)| \di \mu(x) \\
&\leq \sup_{B_\dist(e,3)} \chi^{-1/2} \, \int_{(B_\dist(e,2r_B))^c}|\ell(xy)-\ell(x)| \di \mu(x) \\
&+ \sup_{B_\dist(e,2)} \chi^{-1/2} \, \int_{(B_\dist(e,2r_B))^c} |\chi^{-1/2}(y)-1| |\ell(x)| \di \mu(x) \\
&\leq \sup_{B_\dist(e,3)} \chi^{-1/2} \, \int_{(B_\dist(e,2r_B))^c}|\ell(xy)-\ell(x)| \di \mu(x) \\
&+ \sup_{B_\dist(e,2)} \chi^{-1/2} \, \sup_{B_\dist(e,1)} |\nabla_H \chi^{-1/2}| \, r_B \int_{(B_\dist(e,2r_B))^c}|\ell(x)| \di \mu(x).
\end{aligned}\]
By \eqref{eq:local_embeddings} and Assumptions \ref{en:ass_MH_diff} and \ref{en:ass_MH_annulus},
we conclude that 
\[
\sup_{y,z \in B} \int_{(2B)^c}|L_{X}(x,y)-L_{X}(x,z)| \di\mu_X(x) \\
 \lesssim \| M \|_{\AH^{\sigma,-1/\lebexp}_{\lebexp,\infty}} < \infty.
\]
 It follows that the condition \eqref{eq:hormander-cmm} is satisfied, with $N_1^X(M_\ell(\DD_X)) \lesssim  \| M \|_{\AH^{\sigma,-1/\lebexp}_{\lebexp,\infty}}$.

By Proposition \ref{prp:hardy_hormander}, we conclude that the operator $M_\ell(\DD_X)$ is bounded from $\hardy(\mu_{X})$ to $L^1(\mu_{X})$, with operator norm bounded by a multiple of $\| M \|_{\AH^{\sigma,-1/\lebexp}_{\lebexp,\infty}}$.
 
\bigskip 

\ref{en:globalpart}
By \eqref{eq:omega} and \eqref{eq:omegah} we can decompose $M_g = \sum_{h \geq 3} P_h$, 
where $\hat{P}_h=\omega_h\,\hat M$. By 
Assumption \ref{en:ass_fps_plancherel} and Lemma \ref{lem:paleywiener},
\[
\|\chi^{1/2} k_{P_h(\DD)}\|_{L^1(\mu)} \leq e^{Wh} \,h^{\thresh}\, \| (1+|\cdot|)^{\plnexp} \, P_h\|_{L^\lebexp(\RR)} \lesssim h^{\thresh-s}\, \|M\|_{\AcH^{s,\plnexp-s,W}_{\lebexp,\lebexp}}.
\]
Since $s>\thresh+1$, by summing over $h$ we deduce that
\[\begin{split}
\|k_{M_g(\DD_X)}\|_{L^1(\mu_X)} &= \|\chi^{1/2} k_{M_g(\DD)}\|_{L^1(\mu)} \\
&\leq  \sum_{h \geq 3} \|\chi^{1/2} k_{P_h(\DD)}\|_{L^1(\mu)} 
\lesssim \|M\|_{\AcH^{s,\plnexp-s,W}_{\lebexp,\lebexp}},
\end{split}\]
where the first equality is due to \eqref{eq:rel_cv_ops}.
Since $M \in \AcH^{s,\plnexp-s,W}_{\lebexp,\lebexp}$, this proves that $M_g(\DD_X)$ is bounded on $L^1(\mu_X)$, with operator norm bounded by a multiple of $\|M\|_{\AcH^{s,\plnexp-s,W}_{\lebexp,\lebexp}}$.

\bigskip

\ref{en:bothparts}.
Since $s \geq \sigma$,
\[
\|M\|_{\AH^{\sigma,-1/\lebexp}_{\lebexp,\infty}} \lesssim \|M\|_{\AH^{s,-1/\lebexp}_{\lebexp,\infty}} \lesssim \|M\|_{\AcH^{s,-1/\lebexp,W}_{\lebexp,\infty}}
\]
by \eqref{eq:AcHthreelines}. Moreover, since 
 $-1/\lebexp > \plnexp-s$,
by Lemma \ref{lem:AHemb}\ref{en:AHemb_rtau}
\[
\|M\|_{\AcH^{s,\plnexp-s,W}_{\lebexp,\lebexp}} \lesssim \|M\|_{\AcH^{s,-1/\lebexp,W}_{\lebexp,\infty}}.
\]
Since $M = M_\ell + M_g$, part \ref{en:bothparts} follows by combining parts \ref{en:localpart} and \ref{en:globalpart}, and observing that boundedness on $L^1(\mu_X)$ implies boundedness from $\hardy(\mu_X)$ to $L^1(\mu_X)$.
\end{proof}

Via interpolation we can now derive Theorem \ref{teo:main} in the remaining case $p \in (1,2)$; the result is contained in Proposition \ref{prp:lp_boundedness}\ref{en:bothparts_int}.

\begin{prop}\label{prp:lp_boundedness}
Suppose that Assumptions \ref{en:ass_MH_diff}, \ref{en:ass_MH_annulus}, \ref{en:ass_fps_plancherel} hold for some $\lebexp \in [2,\infty)$, $\sigma \in (1/\lebexp,\infty)$, $\thresh,\plnexp \in [0,\infty)$, $W \in (0,\infty)$.
Let $q \in (\lebexp,\infty)$.
Let $M : \RR \to \CC$ be an even bounded Borel function and decompose it as $M = M_\ell + M_g$, where $M_\ell = \eta * M$.
\begin{enumerate}[label=(\roman*)]
\item\label{en:localpart_int}
If $M \in \AH^{s,-1/q}_{q,\infty}$ for some $s > \sigma \lebexp/q$, 
 then $M_\ell(\DD_X)$ extends to a bounded operator on $L^p(\mu_{X})$ for all $p$ satisfying $|1/p-1/2| \leq \lebexp/(2q)$ and
\[
\|M_\ell(\DD_X)\|_{L^p(\mu_{X}) \to L^p(\mu_{X})} \leq C_{s,q} \| M \|_{\AH^{s,-1/q}_{q,\infty}}.
\]
\item\label{en:globalpart_int}
If $M \in \AcH^{s,\tau-s,W \lebexp/q}_{q,q}$ for some $s > (\thresh+1) \lebexp/q$ and $\tau > \plnexp \lebexp/q$,
 then $M_g(\DD_X)$ extends to a bounded operator on $L^p(\mu_{X})$ for all $p$ satisfying $|1/p-1/2| \leq \lebexp/(2q)$ and
\[
\|M_g(\DD_X)\|_{L^p(\mu_{X}) \to L^p(\mu_{X})} \leq C_{s,\tau,q} \| M \|_{\AcH^{s,\tau-s,W \lebexp/q}_{q,q}}.
\]
\item\label{en:bothparts_int} If $M \in \AcH^{s,-1/q,W \lebexp/q}_{q,\infty}$ for some $s$ satisfying
\[
s > (\lebexp/q) \max \{\sigma, \thresh+1, \plnexp +1/\lebexp\},
\]
then $M(\DD_X)$ extends to a bounded operator on $L^p(\mu_{X})$ for all $p$ satisfying $|1/p-1/2| \leq \lebexp/(2q)$ and
\[
\|M(\DD_X)\|_{L^p(\mu_{X}) \to L^p(\mu_{X})} \leq C_{s,q} \| M \|_{\AcH^{s,-1/q,W \lebexp/q}_{q,\infty}}.
\]
\end{enumerate}
\end{prop}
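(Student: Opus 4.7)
The plan is to apply Stein's analytic interpolation theorem to bridge the endpoint results of Proposition \ref{prp:hardy_boundedness} with the trivial $L^2(\mu_X)$-boundedness that holds as soon as the multiplier is bounded on $\RR$. After decomposing $M$ into real and imaginary parts as in the proof of Proposition \ref{prp:hardy_boundedness}, we may assume the operators involved are self-adjoint on $L^2(\mu_X)$; by duality and interpolation with $L^2(\mu_X)$, it then suffices to treat the extreme exponent $p_*$ of the stated range, i.e.\ $p_*$ with $|1/p_*-1/2| = \lebexp/(2q)$, which is precisely $p_\theta = 2/(2-\theta)$ in the notation of Theorem \ref{thm:hardy_interpol} with $\theta = 1-\lebexp/q \in (0,1)$.

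For part \ref{en:localpart_int}, pick $\sigma_1 \in (\sigma, sq/\lebexp)$ (nonempty by the hypothesis $s > \sigma\lebexp/q$) and build a holomorphic family $\{\tilde M_z\}_{0 \leq \mathrm{Re}(z) \leq 1}$ of even bounded Borel functions on $\RR$, of admissible growth in $|\mathrm{Im}\,z|$, such that $\tilde M_\theta = M$ and
\[
\sup_{y \in \RR} \|\tilde M_{iy}\|_{\AH^{\sigma_1,-1/\lebexp}_{\lebexp,\infty}} + \sup_{y \in \RR} \|\tilde M_{1+iy}\|_{L^\infty(\RR)} \lesssim \|M\|_{\AH^{s,-1/q}_{q,\infty}}.
\]
An explicit construction proceeds via a Littlewood--Paley decomposition $M = \sum_k M \psi_{(k)}$, modifying each dyadic piece by a holomorphic Bessel-potential factor $(1-\partial_\lambda^2)^{\phi(z)/2}$ and a scalar dyadic weight $2^{k\psi(z)}$ with $\phi,\psi$ affine in $z$, calibrated so that the two endpoint norms above reduce to a constant multiple of $\|M\|_{\AH^{s,-1/q}_{q,\infty}}$ (as defined through the dyadic-annuli supremum). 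At $\mathrm{Re}(z) = 0$, Proposition \ref{prp:hardy_boundedness}\ref{en:localpart} provides a uniform $\hardy(\mu_X) \to L^1(\mu_X)$ bound on $\tilde M_{\ell,z}(\DD_X)$; at $\mathrm{Re}(z) = 1$, boundedness of $\tilde M_z$ on $\RR$ yields a uniform $L^2(\mu_X)$-bound. Stein's theorem, together with Theorem \ref{thm:hardy_interpol}, then delivers the desired $L^{p_*}(\mu_X)$-boundedness of $M_\ell(\DD_X) = \tilde M_{\ell,\theta}(\DD_X)$.

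Part \ref{en:globalpart_int} is handled along the same lines, interpolating the $L^1(\mu_X) \to L^1(\mu_X)$ endpoint of Proposition \ref{prp:hardy_boundedness}\ref{en:globalpart} at $\mathrm{Re}(z) = 0$ against $L^2(\mu_X)$-boundedness at $\mathrm{Re}(z) = 1$; here the analytic family naturally lives in the holomorphic $\AcH$-scale, for which Lemma \ref{lem:AcHinterpol} directly supplies it via lower complex interpolation, as both endpoints have finite $r$. Part \ref{en:bothparts_int} follows by splitting $M = M_\ell + M_g$ and appealing to parts \ref{en:localpart_int}, \ref{en:globalpart_int} via the embeddings $\AcH^{s,-1/q,W\lebexp/q}_{q,\infty} \hookrightarrow \AH^{s,-1/q}_{q,\infty}$ (from \eqref{eq:AcHthreelines}) and $\AcH^{s,-1/q,W\lebexp/q}_{q,\infty} \hookrightarrow \AcH^{s,\tau-s,W\lebexp/q}_{q,q}$ for some $\tau \in (\plnexp\lebexp/q, s-1/q)$ (via Lemma \ref{lem:AHemb}\ref{en:AHemb_rtau}; the interval is nonempty by the hypothesis on $s$). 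The main technical obstacle is the construction of the analytic family in part \ref{en:localpart_int}: since $\AH^{s,-1/q}_{q,\infty}$ has $r = \infty$ and arises naturally as an upper complex interpolation space of $\AH$-endpoints also having $r = \infty$, one cannot directly extract from Calder\'on's abstract theory an analytic family suitable for Stein's theorem; the explicit dyadic construction outlined above circumvents this, in the spirit of \cite{anker}.
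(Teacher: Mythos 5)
Your overall strategy --- interpolating the endpoint bounds of Proposition \ref{prp:hardy_boundedness} against the trivial $L^2(\mu_X)$ bound, after reducing by duality to the extreme exponent $|1/p-1/2|=\lebexp/(2q)$ --- is exactly the paper's, and your treatment of parts \ref{en:globalpart_int} and \ref{en:bothparts_int} is essentially correct (for \ref{en:globalpart_int} the paper simply invokes the bilinear interpolation theorem \cite[Theorem 4.4.1]{BL} together with Lemma \ref{lem:AcHinterpol}; no analytic family needs to be exhibited). The gap is in part \ref{en:localpart_int}. The analytic family you propose is built from a Bessel-potential factor $(1-\partial_\lambda^2)^{\phi(z)/2}$ and a dyadic weight $2^{k\psi(z)}$: these devices can move the smoothness index and the weight index of the spaces $\AH^{\sigma,\tau}_{q,r}$ along the family, but they cannot change the integrability exponent of the Sobolev norm. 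Here that exponent must change: at $z=\theta$ you must recover the $L^q$-based condition $M\in\AH^{s,-1/q}_{q,\infty}$; at $\mathrm{Re}\,z=0$ you need a uniform bound in the $L^\lebexp$-based space $\AH^{\sigma_1,-1/\lebexp}_{\lebexp,\infty}$ with $\lebexp<q$ (to feed Proposition \ref{prp:hardy_boundedness}\ref{en:localpart}); and at $\mathrm{Re}\,z=1$ the $L^\infty$ bound is obtained, as in the paper, from a bound in some $\AH^{s_0,-1/q_0}_{q_0,\infty}$ with $q_0>q$ large via Lemma \ref{lem:AHemb}\ref{en:AHemb_C2}. Changing the exponent by hand requires a Calder\'on-type factor of the form $|M|^{a(z)}\sgn M$, which is incompatible with preserving the fractional smoothness needed at the $\mathrm{Re}\,z=0$ endpoint. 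So the construction, as described, does not close.

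The paper sidesteps the construction entirely: since both endpoint multiplier spaces have $r=\infty$, it uses the \emph{upper} complex method, for which Lemma \ref{lem:AHinterpol} provides the identification $(\AH^{s_0,-1/q_0}_{q_0,\infty},\AH^{\sigma,-1/\lebexp}_{\lebexp,\infty})^{[\theta]}=\AH^{s_\theta,-1/q_\theta}_{q_\theta,\infty}$, and then applies the abstract operator interpolation theorem \cite[Theorem 4.4.2]{BL} (with Theorem \ref{thm:hardy_interpol} on the $(\hardy(\mu_X),L^2(\mu_X))$ side), concluding via the embedding $\AH^{s,-1/q}_{q,\infty}\subseteq\AH^{s_\theta,-1/q_\theta}_{q_\theta,\infty}$ after choosing $q_0$ large and $s_0$ small. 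You correctly diagnosed that the lower method is unavailable when $r=\infty$, but the remedy is the upper-method identity already recorded in Lemma \ref{lem:AHinterpol}, not a bare-hands Stein family. If you wish to keep the Stein route, you must either give a construction that genuinely handles the change of Lebesgue exponent (this is delicate and is precisely what the retract arguments behind Lemma \ref{lem:AHinterpol} encapsulate) or fall back on the abstract statement.
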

\begin{proof}
Note that, by duality and interpolation, it is enough to prove the above boundedness results in the case $1/p = 1/2 + \lebexp/(2q)$.

\ref{en:localpart_int}. Take any $q_0 \in (q,\infty)$ and $s_0 \in (1/q_0,\infty)$. Then
\[
\|M_\ell\|_\infty \lesssim \|M\|_\infty \lesssim \|M\|_{\AH^{s_0,-1/q_0}_{q_0,\infty}}
\]
by Young's inequality and Lemma \ref{lem:AHemb}\ref{en:AHemb_C2}, and therefore
\[
\|M_\ell(\DD_X)\|_{L^2(\mu_X) \to L^2(\mu_X)} \lesssim \|M\|_{\AH^{s_0,-1/q_0}_{q_0,\infty}}.
\]
On the other hand, if $s_1 = \sigma$ and $q_1 = \lebexp$, then, by Proposition \ref{prp:hardy_boundedness}\ref{en:localpart},
\[
\|M_\ell(\DD_X)\|_{\hardy(\mu_X) \to L^1(\mu_X)} \lesssim \|M\|_{\AH^{s_1,-1/q_1}_{q_1,\infty}}.
\]
Therefore, by interpolation (see \cite[Theorem 4.4.2]{BL}, Lemma \ref{lem:AHinterpol} and Theorem \ref{thm:hardy_interpol}),
\[
\|M_\ell(\DD_X)\|_{L^{p_\theta}(\mu_X) \to L^{p_\theta}(\mu_X)} \lesssim \|M\|_{\AH^{s_\theta,-1/q_\theta}_{q_\theta,\infty}}
\]
for all $\theta \in (0,1)$, where $s_\theta = (1-\theta)s_0+\theta s_1$ and $1/q_\theta = (1-\theta)/q_0+\theta/q_1$, $1/p_\theta = (1-\theta)/2+\theta/1 = 1/2 + \theta/2$.

If we take $\theta = \lebexp/q$, then $p_\theta = p$ and therefore it is enough to choose $q_0$ and $s_0$ so that the continuous embedding $\AH^{s,-1/q}_{q,\infty} \subseteq \AH^{s_\theta,-1/q_\theta}_{q_\theta,\infty}$ holds. Note, on the other hand, that $1/q_\theta > \theta/q_1 = 1/q$, that is, $q > q_\theta$; therefore, by parts \ref{en:AHemb_sigma} and \ref{en:AHemb_q1} of Lemma \ref{lem:AHemb}, the required embedding holds whenever $s \geq s_\theta$. Since $s_\theta = (1-\lebexp/q)s_0 + \sigma \lebexp/q$, and $s > \sigma \lebexp/q$ by assumption, the inequality  $s \geq s_\theta$ can be simply achieved by choosing $q_0 \in (q,\infty)$ sufficiently large and $s_0 \in (1/q_0,\infty)$ sufficiently small.

\bigskip
\ref{en:globalpart_int}. Take any $q_0 \in (q,\infty)$ and $s_0 \in (1/q_0,\infty)$. Then
\[
\|M_g\|_\infty \lesssim \|M\|_\infty \lesssim \|M\|_{\AH^{s_0,-1/q_0}_{q_0,\infty}} \lesssim \|M\|_{\AcH^{s_0,-1/q_0,0}_{q_0,q_0}}
\]
by Young's inequality and Lemma \ref{lem:AHemb}\ref{en:AHemb_C2}. Therefore
\[
\|M_g(\DD_X)\|_{L^2(\mu_X) \to L^2(\mu_X)} \lesssim \|M\|_{\AcH^{s_0,-1/q_0,0}_{q_0,q_0}}.
\]
On the other hand, if $s_1 > \thresh-1$ and $q_1 = \lebexp$, then, by Proposition \ref{prp:hardy_boundedness}\ref{en:globalpart},
\[
\|M_g(\DD_X)\|_{L^1(\mu_X) \to L^1(\mu_X)} \lesssim \|M\|_{\AcH^{s_1,\plnexp-s_1,W}_{q_1,q_1}}.
\]
Therefore, by interpolation (see \cite[Theorem 4.4.1]{BL} and Lemma \ref{lem:AcHinterpol}),
\[
\|M_g(\DD_X)\|_{L^{p_\theta}(\mu_X) \to L^{p_\theta}(\mu_X)} \lesssim \|M\|_{\AcH^{s_\theta,\tau_\theta,W\theta}_{q_\theta,q_\theta}}
\]
for all $\theta \in (0,1)$, where $s_\theta = (1-\theta)s_0+\theta s_1$, $\tau_\theta = -(1-\theta)/q_0 + \theta (\plnexp-s_1)$ and $1/q_\theta = (1-\theta)/q_0+\theta/q_1$, $1/p_\theta = (1-\theta)/2+\theta/1 = 1/2 + \theta/2$.

If we take $\theta = \lebexp/q$, then $p_\theta = p$ and $W\theta = W \lebexp/q$, and therefore it is enough to choose $q_0$ and $s_0$ so that the continuous embedding $\AH^{s,\tau-s}_{q,q} \subseteq \AH^{s_\theta,\tau_\theta}_{q_\theta,q_\theta}$ holds. Note, on the other hand, that $1/q_\theta > \theta/q_1 = 1/q$, that is, $q > q_\theta$; therefore, by parts \ref{en:AHemb_sigma}, \ref{en:AHemb_rtau} and \ref{en:AHemb_q1} of Lemma \ref{lem:AHemb}, the required embedding holds whenever $s \geq s_\theta$ and $\tau-s > \tau_\theta-1/q+1/q_\theta$. Since $\tau_\theta-1/q+1/q_\theta = \lebexp/q(\plnexp-s_1)$ and we have assumed $\tau > \plnexp \lebexp/q$ and $s > (\thresh+1) \lebexp/q$, the inequality $\tau-s > \tau_\theta-1/q+1/q_\theta$ can be rewritten as $\tau > \plnexp \lebexp/q + (s-s_1 \lebexp/q)$ and is achieved by choosing $s_1 \in (\thresh+1,s q/\lebexp)$ so that $s-s_1 \lebexp/q$ is sufficiently small. Therefore, since $s_\theta = (1-\theta)s_0 + s_1 \lebexp/q$ and $s > s_1 \lebexp/q$, the inequality $s \geq s_\theta$ is simply achieved by choosing $q_0 \in (q,\infty)$ sufficiently large and $s_0 \in (1/q_0,\infty)$ sufficiently small.

\bigskip
\ref{en:bothparts_int}. Note that
\[
\|M\|_{\AH^{s,-1/q}_{q,\infty}}  \lesssim \|M\|_{\AcH^{s,-1/q,W \lebexp/q}_{q,\infty}}
\]
by \eqref{eq:AcHthreelines}. Moreover, since $s > \plnexp \lebexp/q+1/q$, we can choose $\tau>\plnexp\lebexp/q$ so that $s > \tau+1/q$, whence $-1/q > \tau-s$ and therefore, by Lemma \ref{lem:AHemb}\ref{en:AHemb_rtau},
\[
\|M\|_{\AcH^{s,\tau-s,W \lebexp/q}_{q,q}} \lesssim \|M\|_{\AcH^{s,-1/q,W \lebexp/q}_{q,\infty}}.
\]
Since $M = M_\ell + M_g$, part \ref{en:bothparts_int} follows by combining \ref{en:localpart_int} and \ref{en:globalpart_int}.
\end{proof}

\section{Applications}\label{s:applications}

Here we show how our general conditional results, namely Theorem \ref{teo:main} and Corollary \ref{cor:main}, can be applied to refine the multiplier theorem of \cite{HMM} for sub-Laplacians with drift on Lie groups of polynomial growth. Moreover we present an application to distinguished sub-Laplacians with drift on certain groups of exponential growth, whose no-drift counterpart was discussed in \cite{MOV}.

\subsection{Lie groups of polynomial growth}
Suppose that $G$ is a non-compact Lie group of polynomial growth.
Let $d_0,d_\infty \in \NN \setminus \{0\}$ and $\delta \in (0,1]$ be as in Proposition \ref{prp:basic_sub_riemannian}.
Set $d=\max\{d_0,d_{\infty}\}$. 

Let $\psi$ be defined as in \eqref{eq:sumpsi}.  For all $s \geq 0$ we define 
\[
\|F\|_{L^\infty_{s,\sloc}} = \sup_{t > 0} \|F(t\cdot) \, \psi\|_{\AH_\infty^s}.
\]
Finiteness of $\|F\|_{L^\infty_{s,\sloc}}$ can be thought of as a homogeneous $L^\infty$ Mihlin--H\"ormander condition, which can be compared with the $L^q$ inhomogeneous conditions introduced in Section \ref{s:smooth} by observing that, by Sobolev's embedding (see, e.g., \cite[Theorems 6.2.4 and 6.5.1]{BL} and \cite[Lemma 4.8]{martini_analysis_2012}), 
\begin{equation}\label{eq:Linftycomp}
\|F\|_{L^\infty_{s,\sloc}} \lesssim \| F \|_{\AH^{\sigma,-1/q}_{q,\infty}}
\end{equation}
whenever $q \in (1,\infty)$ and $\sigma > s+1/q$.

It is well-known \cite{A,cowling_spectral_2001,DOS,He} that a multiplier theorem of Mihlin--H\"ormander type holds for the sub-Laplacian without drift $\Delta$, entailing the weak-type estimate
\begin{equation}\label{eq:mhteo_poly}
\| F(\Delta) \|_{L^1(\mu) \to L^{1,\infty}(\mu)} \lesssim \|F\|_{L^\infty_{s,\sloc}}
\end{equation}
for all $s > d/2$. From its proof the following estimates can be derived.

\begin{lem}\label{lem:poly_MH}
Let $F: \RR \to \CC$ be a function such that $\|F\|_{L^\infty_{s,\sloc}} <\infty$ for some $s > d/2$.
 Then the following inequalities hold:
\begin{enumerate}[label=(\roman*)]
\item\label{en:poly_MH_diff} $\sup_{y \in G} \int_{|x|_\dist \geq 2|y|_\dist} |k_{F(\Delta)}(xy)-k_{F(\Delta)}(x)| \di\mu (x) \leq C_{s} \| F \|_{L^\infty_{s,\sloc}}$;
\item\label{en:poly_MH_annulus} $\sup_{r>0} \int_{r \leq |x|_\dist < 2r} |k_{F(\Delta)}(x)| \di\mu(x) \leq C_{s} \| F \|_{L^\infty_{s,\sloc}}$;
\end{enumerate} 
In addition, if $r \in (0,\infty)$, $F : \RR \to \CC$ is even and $\hat F \subseteq [-r,r]$, then, for all $\varepsilon>0$,
\begin{enumerate}[label=(\roman*),resume]
\item\label{en:poly_fps_plancherel} $\|\chi^{1/2} k_{F(\DD)}\|_{L^1(\mu)} \leq C_\varepsilon (1+r)^{(d_\infty-\delta)/2} e^{b_X r} \sup_{t \geq 0} |(1+t)^{d_0/2+\varepsilon} \, F(t)|$.\end{enumerate}
\end{lem}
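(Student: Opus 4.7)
The plan is to treat parts \ref{en:poly_MH_diff} and \ref{en:poly_MH_annulus} together, as both are standard weighted kernel estimates underlying the Mihlin--H\"ormander theorem \eqref{eq:mhteo_poly} on polynomial growth groups, and to address part \ref{en:poly_fps_plancherel} separately using finite propagation speed and Lemma \ref{lem:char_estimate}.

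For parts \ref{en:poly_MH_diff} and \ref{en:poly_MH_annulus}, I would appeal to the proofs of the multiplier theorems in \cite{A,cowling_spectral_2001,DOS,He}. The usual strategy is to decompose $F$ dyadically via the cutoff $\psi$, so that the pieces $F_j$ localize at scale $2^j$ with $\|F_j(2^j\cdot)\|_{\AH^s_\infty} \lesssim \|F\|_{L^\infty_{s,\sloc}}$; weighted Plancherel estimates of Mauceri--Meda type then control the $L^2$ mass of $k_{F_j(\Delta)}$ with polynomial weights in $|\cdot|_\dist$, and Cauchy--Schwarz together with the volume asymptotics of Proposition \ref{prp:basic_sub_riemannian} give bounds on $L^1$ norms on dyadic annuli (as well as on the translates appearing in the difference integral of \ref{en:poly_MH_diff}) which sum in $j$ thanks to the hypothesis $s > d/2$.

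For part \ref{en:poly_fps_plancherel}, by finite propagation speed \eqref{eq:fps} the kernel $k_{F(\DD)}$ is supported in $B_\dist(e,r)$, so the Cauchy--Schwarz inequality yields
\[
\|\chi^{1/2} k_{F(\DD)}\|_{L^1(\mu)} \leq \Big(\int_{B_\dist(e,r)} \chi \di\mu\Big)^{1/2} \|k_{F(\DD)}\|_{L^2(\mu)}.
\]
The first factor is bounded by $C (1+r)^{(d_\infty-\delta)/2}e^{b_X r}$, combining Lemma \ref{lem:char_estimate} for $r \geq 1$ (noting $|X|=2b_X$, so that the square root of $e^{|X|r}$ is $e^{b_X r}$) with the trivial estimate $\int_{B_\dist(e,r)}\chi\di\mu \leq e^{|X|}V_\dist(r) \lesssim r^{d_0}$ for $r \in (0,1]$, which is absorbed into the constant. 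For the second factor, the Plancherel formula \eqref{eq:plancherel} applied to $\Delta$, together with the change of variable $\lambda = s^2$, gives
\[
\|k_{F(\DD)}\|_{L^2(\mu)}^2 = \int_0^\infty |F(\sqrt{\lambda})|^2 \di\sigma_\Delta(\lambda) \leq A^2 \int_0^\infty (1+\sqrt{\lambda})^{-d_0-2\varepsilon} \di\sigma_\Delta(\lambda),
\]
where $A = \sup_{t\geq 0}(1+t)^{d_0/2+\varepsilon}|F(t)|$. The remaining integral is finite (depending on $\varepsilon$) by the standard estimate $\sigma_\Delta([0,\lambda]) \lesssim \lambda^{d_0/2}$ for $\lambda \geq 1$, a Tauberian consequence of the small-time on-diagonal heat kernel bound $p_t(e,e) \lesssim t^{-d_0/2}$ on a polynomial growth group with local dimension $d_0$.

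The main technical point is obtaining the sharp exponent $(d_\infty-\delta)/2$ in part \ref{en:poly_fps_plancherel}, which hinges on the refined sphere measure estimate \eqref{eq:poly_sphere_bound} built into Lemma \ref{lem:char_estimate}; using only $V_\dist(r) \sim r^{d_\infty}$ would lose the factor $r^{-\delta}$ and yield a strictly weaker estimate. This sharpness is what ultimately gives the improvement over \cite{HMM} in the polynomial-growth application.
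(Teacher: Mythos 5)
Your proposal is correct and follows essentially the same route as the paper. For part \ref{en:poly_fps_plancherel} the treatment of the first factor (finite propagation speed, Cauchy--Schwarz, Lemma \ref{lem:char_estimate}, with the small-$r$ case absorbed into the constant) is identical; for the second factor you bound $\|k_{F(\DD)}\|_{L^2(\mu)}$ via the Plancherel measure and the Tauberian estimate $\sigma_\Delta([0,\lambda])\lesssim\lambda^{d_0/2}$, whereas the paper writes $\|k_{F(\DD)}\|_{L^2(\mu)}=\|F(\DD)\|_{L^2(\mu)\to L^\infty(\mu)}$ and factors through the boundedness of $(1+\Delta)^{-(d_0/2+\varepsilon)/2}:L^2(\mu)\to L^\infty(\mu)$ from \cite{HMM}; these are equivalent statements, both resting on the small-time on-diagonal heat kernel bound, so the difference is cosmetic. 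For part \ref{en:poly_MH_diff} the paper, like you, simply reduces (by a reflection/conjugation argument) to a formula proved in \cite{A}. The only place where your sketch is thinner than the paper's argument is part \ref{en:poly_MH_annulus}: the dyadic pieces $F_j(2^{-j}\Delta)$ with frequency scale $2^{-j/2}$ \emph{larger} than $r$ cannot be summed using the smoothness hypothesis; the paper collects them into a single low-frequency tail $\tilde F_J$ (with $2^{-J/2}\sim r$) and bounds its contribution by H\"older together with $\|k_{\tilde F_J(\Delta)}\|_{L^2(\mu)}\lesssim V_\dist(2^{-J/2})^{-1/2}\|F\|_\infty$ and the local doubling of $V_\dist$, while the high-frequency pieces are controlled by the weighted $L^1$ estimates of \cite[Theorem 6.1]{M} (which require the Gaussian heat kernel bounds of \cite{varopoulos_analysis_1992}). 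You should make this low/high frequency splitting explicit, but it is a standard component of the strategy you describe rather than a flaw in it.
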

\begin{proof}
By replacing $F$ with $\overline{F}$, exploiting the unimodularity of $G$ and the fact that $k_{\overline{F}(\Delta)}(x) = \overline{k_{F(\Delta)}(x^{-1})}$, the estimate in part \ref{en:poly_MH_diff} can be equivalently rewritten as
\[
\sup_{y \in G} \int_{|x|_\dist \geq 2|y|_\dist} |k_{F(\Delta)}(y^{-1}x)-k_{F(\Delta)}(x)| \di\mu(x) \leq C_{s} \| F \|_{L^\infty_{s,\sloc}},
\]
which is proved in \cite[formula (13)]{A}.

Let us now prove part \ref{en:poly_MH_annulus}. Note that, by Proposition \ref{prp:basic_sub_riemannian},
\begin{equation}\label{eq:poly_doublingQ}
V_\dist(\lambda r) \leq C (1+\lambda)^d V_\dist(r)
\end{equation}
for all $\lambda,r \in \Rpos$. Moreover, by \cite[Theorem VIII.2.4]{varopoulos_analysis_1992}, $\Delta$ satisfies Gaussian-type heat kernel bounds of the form
\[
|k_{\exp(-t\Delta)}| \leq C \, V_\dist(t^{1/2})^{-1} \exp(- b|x|_\dist^2/t)
\]
for some $C,b \in \Rpos$ and all $t \in \Rpos$. Hence we can apply \cite[Theorem 6.1]{M} to the operator $\Delta$.

Define now
\[
F_j(\lambda)=F(2^j\lambda)\,\psi(\lambda)\qquad\forall j\in\ZZ \quad \forall\lambda\in\Rpos
\]
and set $\tilde F_J = 
\sum_{j < J} F_j(2^{-j} \cdot)$ for all $J \in \ZZ$.
Then, for all $J \in \ZZ$,
\[
F(\Delta)= \tilde F_J(\Delta) + \sum_{\substack{j\in\ZZ \\ j \geq J}} F_j(2^{-j}\Delta)\,,
\]
in the sense of strong convergence of operators on $L^2(\mu)$.

Choose $\varepsilon \in \Rpos$ so that  $s > d/2+\varepsilon$.  Fix $r \in \Rpos$, and let $J \in \ZZ$ be minimal so that $2^{J/2} r > 1$. 

Note that $\supp F_j \subseteq [2^{j-2},2^{j+2}]$. By \cite[Theorem 6.1(ii)]{M}, for all $j \in \ZZ$ such that $j \geq J$,
\[\begin{split}
\int_{r \leq |x|_\dist < 2r} |k_{F_j(2^{-j}\Delta)}(x)| \di \mu(x) 
&\leq (1+2^{1+j/2} r)^{-\varepsilon} \|(1+2^{1+j/2} |\cdot|_\dist)^\varepsilon k_{F_j(2^{-j}\Delta)}  \|_1 \\
&\leq C_{s} (1+2^{1+j/2} r)^{-\varepsilon} \|F_j\|_{\AH_\infty^s}  \\
&\leq C_{s} (2^{j/2} r)^{-\varepsilon} \|F\|_{L^\infty_{s,\sloc}}.
\end{split}\]

On the other hand, $\supp \tilde F_J \subseteq [0,2^{J+2}]$. Hence, by H\"older's inequality and \cite[Theorem 6.1(i)]{M},
\[\begin{split}
\int_{r \leq |x|_\dist < 2r} |k_{\tilde F_J(\Delta)}(x)| \di \mu(x) 
&\leq V_\dist(2r)^{1/2} \|k_{\tilde F_J(\Delta)}\|_2  \\
&\leq C V_\dist(2r)^{1/2} V_\dist(2^{-1-J/2})^{-1/2} \|\tilde F_J\|_\infty   \\
&\leq C  \|F\|_{\infty};
\end{split}\]
in the last step the fact that $2^{-J/2} \sim r$ and the doubling condition \eqref{eq:poly_doublingQ} were used.

Therefore, by summing the previous estimates,
\[\begin{split}
&\int_{r \leq |x| < 2r} |k_{F(\Delta)}(x)| \di \mu(x) \\
&\leq \int_{r \leq |x| < 2r} |k_{\tilde F_J(\Delta)}(x)| \di \mu(x) + \sum_{\substack{j \in \ZZ \\ j \geq J }} \int_{r \leq |x| < 2r} |k_{F_j(2^{-j} \Delta)}(x)| \di \mu(x) \\
&\leq C_s \left(\|F\|_\infty +  \|F\|_{L^\infty_{s,\sloc}} \sum_{\substack{j \in \ZZ \\ 2^{j/2}r >1 }} (2^{j/2} r)^{-\varepsilon}  \right) \\
&\leq C_s  \|F\|_{L^\infty_{s,\sloc}}
\end{split}\]
and part \ref{en:poly_MH_annulus} follows.

As for part \ref{en:poly_fps_plancherel}, note that, if $\supp \hat F \subseteq [-r,r]$, then, by finite propagation speed,
$\supp k_{F(\DD)} \subseteq B_\dist(e,r)$
and therefore, by Lemma \ref{lem:char_estimate} and the Cauchy--Schwarz inequality,
\[
\|\chi^{1/2} k_{F(\DD)}\|_{L^1(\mu)} \lesssim (1+r)^{(d_\infty-\delta)/2} e^{b_X r} \| k_{F(\DD)} \|_{L^2(\mu)}.
\]
On the other hand, for all $\varepsilon > 0$,
\[\begin{split}
\| k_{F(\DD)} \|_{L^2(\mu)} 
&= \| F(\DD) \|_{L^2(\mu) \to L^\infty(\mu)} \\
&\leq \| (1+\Delta)^{(d_0/2+\varepsilon)/2} \, F(\DD) \|_{L^2(\mu) \to L^2(\mu)} \\
&\times \| (1+\Delta)^{-(d_0/2+\varepsilon)/2} \|_{L^2(\mu) \to L^\infty(\mu)}
\end{split}\]
and, by spectral theory,
\[
\| (1+\Delta)^{(d_0/2+\varepsilon)/2} \, F(\DD) \|_{L^2(\mu) \to L^2(\mu)} \lesssim \sup_{t \geq 0} | (1+t)^{d_0/2+\varepsilon} \, F(t)|.
\]
Since $\| (1+\Delta)^{-(d_0/2+\varepsilon)/2} \|_{L^2(\mu) \to L^\infty(\mu)} < \infty$ (see, e.g., \cite[proof of Lemma 5.6]{HMM}), part \ref{en:poly_fps_plancherel} follows.
\end{proof}

The previous lemma, together with \eqref{eq:Linftycomp} and Remarks \ref{rem:CZ} and \ref{rem:linfty_lq}, shows that Assumptions \ref{en:ass_MH_diff}, \ref{en:ass_MH_annulus}, \ref{en:ass_fps_plancherel} are satisfied for $W = b_X$, $\lebexp \in [2,\infty)$, $\thresh = (d_\infty-\delta)/2$, $\plnexp > d_0/2$, $\sigma > \max\{d_0,d_\infty\}/2$.
Hence in this case from Corollary \ref{cor:main} we obtain the following multiplier theorem, whose part \ref{en:main_lppol} refines \cite[Theorem 5.2]{HMM}.

\begin{teo}\label{teo:main-pol}
Suppose that $G$ is a non-compact Lie group of polynomial growth, and let $d_0,d_\infty \in \NN \setminus \{0\}$ and $\delta \in (0,1]$ be as in Proposition \ref{prp:basic_sub_riemannian}.
Let $p \in [1,\infty] \setminus \{2\}$. Let $W_{X,p}$ be defined as in \eqref{eq:strip_width}.
Suppose that $M\in H^{\infty}(\Sigma_{W_{X,p}};s)$ for some $s \in \NN$,
\begin{equation}\label{eq:poly_smoothness}
s > |1/p-1/2| \max\{ d_0, d_\infty-\delta+2 \} .
\end{equation}
Then the following hold:
\begin{enumerate}[label=(\roman*)]
\item\label{en:main_lppol} if $p\in (1,\infty)$, then $M(\DD_X)$ extends to a bounded operator on $L^p(\mu_{X})$;
\item\label{en:main_h1pol} if $p=1$, then $M(\DD_X)$ extends to a bounded operator from $\hardy(\mu_{X})$ to $L^1(\mu_{X})$;
\item\label{en:main_bmopol} if $p=\infty$, then $M(\DD_X)$ extends to a bounded operator from $L^{\infty}(\mu_{X})$ to $\bmo(\mu_{X})$. 
\end{enumerate}
\end{teo}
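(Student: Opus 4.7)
The strategy is to verify that the three Assumptions \ref{en:ass_MH_diff}, \ref{en:ass_MH_annulus}, \ref{en:ass_fps_plancherel} of Section \ref{s:main} hold in the polynomial growth setting, and then invoke Corollary \ref{cor:main} with suitable parameters. The key input is Lemma \ref{lem:poly_MH}, combined with the Sobolev-type comparison \eqref{eq:Linftycomp} and the reformulations discussed in Remarks \ref{rem:CZ} and \ref{rem:linfty_lq}.

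First I would fix $\lebexp \in [2,\infty)$ (to be chosen arbitrarily large later) and $\sigma \in (d/2, \infty)$ with $d = \max\{d_0, d_\infty\}$. For an even $F$ with $\supp \hat F \subseteq [-2,2]$, by \eqref{eq:Linftycomp} one has $\|F\|_{L^\infty_{\sigma',\sloc}} \lesssim \|F\|_{\AH^{\sigma,-1/\lebexp}_{\lebexp,\infty}}$ for any $\sigma' \in (d/2, \sigma-1/\lebexp)$, provided $\sigma$ is chosen large enough. Applying Lemma \ref{lem:poly_MH}\ref{en:poly_MH_diff} to $F(\sqrt{\cdot})$ (noting $k_{F(\DD)} = k_{F(\sqrt{\cdot})(\Delta)}$) gives Assumption \ref{en:ass_MH_diff}, and Lemma \ref{lem:poly_MH}\ref{en:poly_MH_annulus}, combined with the dyadic decomposition argument of Remark \ref{rem:CZ} (restricted to $r \leq 1$, which is all that is needed), yields Assumption \ref{en:ass_MH_annulus}.

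For Assumption \ref{en:ass_fps_plancherel} I would take $W = b_X$. Given $F$ with $\Rnon \cap \supp \hat F \subseteq [h-2,h]$, Lemma \ref{lem:poly_MH}\ref{en:poly_fps_plancherel} applied to the even extension (with $r = h$) gives
\[
\|\chi^{1/2} k_{F(\DD)}\|_{L^1(\mu)} \lesssim h^{(d_\infty-\delta)/2} e^{b_X h} \sup_{t \geq 0} |(1+t)^{d_0/2+\varepsilon} F(t)|.
\]
By the Cauchy--Schwarz comparison of Remark \ref{rem:linfty_lq}, the $L^\infty$-norm on the right is controlled by the $L^\lebexp$ norm $\|(1+|\cdot|)^\plnexp F\|_{L^\lebexp(\RR)}$ whenever $\plnexp > d_0/2 + \varepsilon + 1/\lebexp$. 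Choosing $\lebexp$ large and $\varepsilon$ small, one can take $\plnexp$ arbitrarily close to $d_0/2$ from above. This establishes Assumption \ref{en:ass_fps_plancherel} with $\thresh = (d_\infty-\delta)/2$.

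Finally I would apply Corollary \ref{cor:main}. The smoothness order $N$ required there must satisfy
\[
N > 2\,|1/p-1/2|\,\max\{\sigma,\thresh+1,\plnexp+1/\lebexp\}.
\]
With the parameters above, $2\thresh+2 = d_\infty - \delta + 2$, while $2\sigma$ and $2\plnexp + 2/\lebexp$ can be made arbitrarily close to $d$ and $d_0$ respectively by appropriate choices of $\lebexp,\sigma,\plnexp$. Since $d_\infty - \delta + 2 > d_\infty$ (as $\delta \leq 1$), the three quantities $\sigma, \thresh+1, \plnexp+1/\lebexp$ are jointly bounded above by any number strictly exceeding $\max\{d_0, d_\infty-\delta+2\}/2$, and the hypothesis \eqref{eq:poly_smoothness} ensures that such a choice is compatible with $N = s$. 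Noting that $W_{X,p} = 2|1/p-1/2|W$ with $W = b_X$, the three conclusions \ref{en:main_lppol}--\ref{en:main_bmopol} follow directly from the corresponding parts of Corollary \ref{cor:main}. The only subtle point in the argument is the bookkeeping that ensures all parameter choices can be made simultaneously; this is routine since each of the three bounds on $\sigma$, $\plnexp$, $\lebexp$ involves only a strict inequality that can be tightened independently.
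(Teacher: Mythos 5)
Your proposal is correct and follows exactly the route the paper takes: verify Assumptions \ref{en:ass_MH_diff}, \ref{en:ass_MH_annulus}, \ref{en:ass_fps_plancherel} via Lemma \ref{lem:poly_MH} together with \eqref{eq:Linftycomp} and Remarks \ref{rem:CZ} and \ref{rem:linfty_lq}, with $W=b_X$, $\thresh=(d_\infty-\delta)/2$, $\plnexp$ close to $d_0/2$, $\sigma$ close to $d/2$ and $\lebexp$ large, and then apply Corollary \ref{cor:main}. Your parameter bookkeeping at the end matches the paper's (implicit) one and is carried out correctly.
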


Note that in \cite[Theorem 5.2]{HMM} the condition $s>\max\{d_0+4,d_\infty+2\}/2$ is required instead of \eqref{eq:poly_smoothness}.

Suppose now that $G$ is a stratified group and $\Delta$ is a homogeneous sub-Laplacian thereon. In this case $d = d_0 = d_\infty$ is the homogeneous dimension of $G$, and moreover $\delta = 1$ (since $V_\dist(r) = V_\dist(1) \, r^d$), so the condition \eqref{eq:poly_smoothness} becomes
\[
s > |1/p-1/2| (d+1) .
\]
This condition (or rather its endpoint version for $p=1$) should be compared with the condition $s > d/2$ in the multiplier theorem \eqref{eq:mhteo_poly} for the sub-Laplacian without drift $\Delta$. Note that, in the case of stratified groups, a sharper result for $\Delta$ is available \cite{christ_multipliers_1991,mauceri_vectorvalued_1990}, where the $L^\infty$ Sobolev norm in \eqref{eq:mhteo_poly} is replaced by an $L^2$ Sobolev norm.
Moreover, for many classes of stratified groups (especially $2$-step groups), the condition $s>d/2$ is not optimal and in a number of cases it can be pushed down to $s > (\dim G)/2$, where $\dim G$ is the topological dimension of $G$ \cite{hebisch_multiplier_1993,martini_heisenbergreiter,MMue,MMue2,MueS}. It is conceivable that similar techniques could be adapted to sharpen the estimates in Lemma \ref{lem:poly_MH} and refine the result for the sub-Laplacian with drift $\Delta_X$ on those classes of groups.

\subsection{Solvable extensions of stratified groups}\label{Subsec:NA}
In this subsection we prove a multiplier theorem for the sub-Laplacian with drift $\Delta_X$ when $G$ is a solvable extension of a stratified Lie group constructed as follows (we refer to \cite{MOV} for more details on the construction). 
 
Let $N$ be a stratified group. In other words, $N$ is a simply connected Lie group, whose Lie algebra $\lie{n}$ is endowed with a derivation $D$ such that the eigenspace of $D$ corresponding to the eigenvalue $1$ generates $\lie{n}$ as a Lie algebra. 
The eigenvalues of $D$ are positive integers $1,\dots,\step$ and $\lie{n}$ is the direct sum of the eigenspaces of $D$, which are called layers: the $j$th layer corresponds to the eigenvalue $j$. Moreover $\lie{n}$ is $\step$-step nilpotent, where $\step$ is the maximum eigenvalue.
 The exponential map $\exp_N : \lie{n} \to N$ is a diffeomorphism and provides global coordinates for $N$. Any chosen Lebesgue measure on $\lie{n}$ is then a left and right Haar measure on $N$.  

The formula $\delta_t = \exp((\log t) D)$ defines a family of automorphic dilations $(\delta_t)_{t>0}$ on $N$. For all measurable sets $E \subset N$ and $t > 0$, $|\delta_t E| = t^Q |E|$, where $Q = \tr D$ is the homogeneous dimension of $N$. Note that $Q \geq \dim N$, where $\dim N$ is the topological dimension of $N$, and in fact $Q = \dim N$ if and only if $\step=1$, i.e., if and only if $N$ is abelian. Note moreover that, if $Q = 1$, then $N \cong \RR$.

Let $A = \RR$, considered as an abelian Lie group. Again we identify $A$ with its Lie algebra $\lie{a}$. Then $A$ acts on $N$ by dilations, that is, we have a homomorphism $A \ni u \mapsto \delta_{e^u} \in \Aut(N)$ and we can define the corresponding semidirect product $G = N \rtimes A$, with operation
\[
(z,u) \cdot (z',u') = (z \cdot e^{uD} z', u+u') \,.\]
The group $G$ is a solvable Lie group. The right Haar measure $\mu$ on $G$ is given by
\[
\di\mu(z,u) = \di z \di u
\]
\cite[\S (15.29)]{hewitt_abstract_1979} and the modular function $m$ is given by $m(z,u) =  e^{-Qu}$. In particular $G$ is not unimodular and has exponential volume growth \cite[Lemme I.3]{guivarch_croissance_1973}.

Consider a system $\vfN_1,\dots,\vfN_\nu$ of left-invariant vector fields on $N$ that form a basis of the first layer of the Lie algebra of $N$. Let $\vfA = \partial_u$ be the canonical basis of $\lie{a}$. The vector fields $\vfA$ on $A$ and $\vfN_1,\dots,\vfN_\nu$ on $N$ can be lifted to left-invariant vector fields on $G$ given by
\[
\vfG_0|_{(z,u)} = \vfA|_z = \partial_u,  \qquad \vfG_j|_{(z,u)} = e^{u} \vfN_j|_z \qquad \text{ for $j=1,\dots,\nu.$}
\]
The system $\vfG_0,\dots,\vfG_\nu$ generates the Lie algebra $\lie{g}$ of $G$ and determines a sub-Riemannian structure and a left-invariant sub-Laplacian $\Delta=-\sum_{j=0}^{\nu}X_j^2$ on $G$. A formula for the sub-Riemannian distance $\dist$ is available in this case \cite[Proposition 2.7]{MOV} and the corresponding local dimension $d_0$ (see  Proposition \ref{prp:basic_sub_riemannian}) equals $Q+1$.

Recall from the discussion in Section \ref{s:preliminaries} that symmetric sub-Laplacians with drifts on $G$ correspond to positive characters.

\begin{lem}\label{l:characters}
All nontrivial positive characters of $G$ are of the form
\begin{equation}\label{eq:char_NA}
\chi_{\alpha}(z,u)=e^{\alpha u}, \qquad\forall (z,u)\in G,
\end{equation}
for some $\alpha\in \RR\setminus\{0\}$.
\end{lem}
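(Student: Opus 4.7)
The plan is to exploit that $\RR^+$ under multiplication is abelian, so any positive character $\chi$ must annihilate the commutator subgroup $[G,G]$. The crucial step will therefore be to show $N \subseteq [G,G]$, which reduces the classification to characters of the quotient $G/N \cong A = \RR$, where everything is explicit.

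First I would compute, directly from the semidirect product law, the effect of conjugation by $(e_N, u)$ on a generic element $(n,0) \in N \times \{0\}$. Using the formula $(z,u)^{-1} = (e^{-uD} z^{-1}, -u)$ a short computation gives
\[
(e_N, u)\,(n,0)\,(e_N, u)^{-1} = (e^{uD} n, 0),
\]
so that the commutator equals $[(e_N, u), (n, 0)] = (e^{uD} n \cdot n^{-1}, 0)$. Specialising $n = \exp_N(V)$ with $V$ in the first layer $V_1$ of $\lie{n}$ (which by hypothesis is the $1$-eigenspace of $D$), one has $e^{uD} n = \exp_N(e^u V)$; since $V_1$ lies in a single abelian subspace of $\lie{n}$, the Baker--Campbell--Hausdorff formula reduces to $(e^{uD} n)\cdot n^{-1} = \exp_N\bigl((e^u-1)V\bigr)$. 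Fixing any $u \neq 0$, as $V$ ranges over $V_1$ these commutators sweep out all of $\exp_N(V_1)$, and this set generates $N$ as a topological group because $V_1$ Lie-generates $\lie{n}$ and $N$ is connected, simply connected, and nilpotent. Hence $N \subseteq [G,G] \subseteq \ker\chi$.

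Consequently $\chi$ factors through $G/N \cong A = \RR$, and the residual task is the classification of continuous positive characters of the additive group $\RR$: these are precisely the one-parameter subgroups $u \mapsto e^{\alpha u}$ for $\alpha \in \RR$. Nontriviality of $\chi$ rules out $\alpha = 0$, and this yields \eqref{eq:char_NA}. I do not anticipate a serious obstacle; the only point that warrants care is the passage from first-layer generators to all of $N$, which is a standard consequence of the stratification hypothesis together with simple connectedness of $N$.
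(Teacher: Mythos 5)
Your proof is correct, but it takes a genuinely different route from the paper's. The paper argues infinitesimally: the differential $\chi'$ of a positive character is a Lie algebra homomorphism into the abelian algebra $\RR$, hence kills $[\lie{g},\lie{g}]$, which is identified (via \cite[\S 2.1]{MOV}) with $\lie{n}=\{(z,0)\tc z\in\lie{n}\}$; this forces $\chi'(z,u)=\alpha u$ and then $\chi(z,u)=e^{\alpha u}$. You instead work at the group level, exhibiting explicit commutators $[(e_N,u),(\exp_N V,0)]=(\exp_N((e^u-1)V),0)$ for $V$ in the first layer, and concluding $N\subseteq[G,G]\subseteq\ker\chi$ so that $\chi$ descends to $G/N\cong\RR$. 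Your computations of the inverse, the conjugation, and the commutator are all correct, and the reduction to characters of $\RR$ is sound. The group-level argument is more self-contained (it does not invoke the identification of $[\lie{g},\lie{g}]$ from \cite{MOV}), at the cost of needing the standard facts that $\exp_N(V_1)$ generates $N$ and that continuous characters of $\RR$ are exponentials. One small inaccuracy in your justification: the first layer $V_1$ is in general \emph{not} an abelian subspace of $\lie{n}$ (e.g., for the Heisenberg group $[V_1,V_1]$ spans the centre); what makes Baker--Campbell--Hausdorff collapse in your computation is merely that $e^uV$ and $-V$ are proportional, hence span a one-dimensional (abelian) subalgebra. With that phrase corrected, the argument stands.
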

\begin{proof}
Let $\chi$ be a positive character of $G$. Then its differential $\chi' : \lie{g} \to \RR$ is a homomorphism of Lie algebras and therefore $\chi'$ vanishes on $[\lie{g},\lie{g}]$. On the other hand, as explained in \cite[\S 2.1]{MOV}, $\lie{g}$ can be identified with the semidirect product $\lie{n}\rtimes \lie{a}$, and $[\lie{g},\lie{g}] = \{(z,0) \tc z \in \lie{n}\}$. Hence $\chi'(z,u) = \alpha u$ for some $\alpha \in \RR$, and consequently $\chi(z,u) = e^{\alpha u}$.
\end{proof}
Given $\alpha\in \RR\setminus\{0\}$, we consider the vector field $X=\alpha X_0$. It is easy to check that $X|_e=\nabla_H \chi_\alpha|_e$, where $\chi_\alpha$ is given by \eqref{eq:char_NA}. We shall prove a multiplier theorem for $\Delta_{X}=\Delta-X$. 

Let $\psi$ be defined as in \eqref{eq:sumpsi}. For all 
$s \geq 0$ 
we define
\[
\|F\|_{L^2_{s,\sloc}} = \sup_{t > 0} \|F(t\cdot)\,\psi\|_{\AH_2^{s}}.
\]
Finiteness of 
$\|F\|_{L^2_{s,\sloc}}$ 
can be thought of as a homogeneous Mihlin--H\"ormander condition, which can be compared with the inhomogeneous conditions introduced in Section \ref{s:smooth} by observing that
\begin{equation}\label{eq:L2comp}
\|F\|_{L^2_{s,\sloc}} \lesssim \| F \|_{\AH^{s,-1/2}_{2,\infty}}
\end{equation}
whenever $s  > 1/2$ (see, e.g., \cite[Lemma 4.8]{martini_analysis_2012}).

In \cite{MOV} a multiplier theorem of Mihlin--H\"ormander type for the sub-Laplacian without drift $\Delta$ was proved, entailing the weak-type estimate
\begin{equation}\label{eq:mhteo_na}
\|F(\Delta)\|_{L^1(\mu) \to L^{1,\infty}(\mu)} \lesssim \|F\|_{L^2_{s,\sloc}}
\end{equation}
for $s > \max\{3/2,(Q+1)/2\}$. From its proof a number of estimates can be derived, that can be used to verify the assumptions of the conditional multiplier theorem of Section \ref{s:main}.
It should be noted that \cite{MOV} only discusses the case $Q \geq 2$ explicitly, but a few small adjustments allow one to consider the case $Q=1$ as well.

For all $\lambda \in \Rpos$ and $a,b \in \RR$, let $\lambda^{[a,b]}$ denote $\lambda^a$ if $\lambda \leq 1$ and $\lambda^b$ if $\lambda \geq 1$.

\begin{lem}\label{lem:na_variation}
For all even bounded Borel functions $F : \RR \to \CC$,
\begin{equation}\label{eq:na_plancherel}
\|k_{F(\DD)}\|_{L^2(\mu)}^2 \sim \int_0^\infty |F(\lambda)|^2 \,\lambda^{[3,Q+1]} \, \frac{\ndi\lambda}{\lambda}.
\end{equation}
Moreover, if $\supp F \subseteq [-2,2]$ then, for all $\varepsilon \in \Rnon$, $t \in \Rpos$ and $y \in B_\dist(e,1)$,
\begin{align}
\label{eq:na_cptF} \int_{|x|_\dist \leq 4} |k_{F(t\DD)}(x)| (1+t^{-1} |x|_\dist)^\varepsilon \di\mu(x) &\leq C_{s,\varepsilon} \|F\|_{\AH_2^s}, \\
\label{eq:na_cptFgrad} \int_{|x|_\dist \leq 4} |\nabla_H k_{F(t\DD)}(x)| (1+t^{-1} |x|_\dist)^\varepsilon \di\mu(x) &\leq C_{s,\varepsilon} \, t^{-1} \|F\|_{\AH_2^s}, \\
\label{eq:na_cptFdiff} \int_{|x|_\dist \leq 3} |k_{F(t\DD)}(xy) - k_{F(t\DD)}(x)| \di\mu(x) &\leq C_{s} \, t^{-1} \, |y|_\dist \, \|F\|_{\AH_2^s}
\end{align}
whenever $t \leq 1$ and $s > (Q+1)/2+\varepsilon$, or $t \geq 1$ and $s > 0$.
\end{lem}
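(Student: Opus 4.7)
The plan is to extract each of the four bounds from the apparatus developed in \cite{MOV} for the sub-Laplacian without drift, with small extensions to cover the case $Q=1$ (for which $N\cong\RR$ and \cite{MOV} only explicitly treats $Q\geq 2$). The Plancherel identity \eqref{eq:na_plancherel} is essentially a restatement of the Plancherel formula for $\Delta$ on $G = N \rtimes A$: the $L^2$ spectral decomposition of $\Delta$ in \cite{MOV}, obtained via direct integral over the unitary dual of $N$ and intertwining with an explicit model operator on the $A$-side, yields a Plancherel measure $\sigma_\Delta$ whose density against $\ndi\lambda$ behaves like $\lambda^{1/2}$ as $\lambda \to 0$ and $\lambda^{(Q-1)/2}$ as $\lambda \to \infty$. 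The change of variable $\lambda \mapsto \lambda^2$ that relates $\Delta$ and $\DD$ then converts this into the two-regime weight $\lambda^{[3,Q+1]} \ndi\lambda / \lambda$ appearing in \eqref{eq:na_plancherel}. For $Q=1$ the group $G$ is essentially the affine group of the line and the Plancherel formula reduces to a classical Fourier-analytic computation.

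For the three remaining estimates, the uniform strategy is: exploit $\supp F\subseteq[-2,2]$ together with finite propagation speed (which gives $\supp k_{F(t\DD)}\subseteq B_\dist(e,2t)$), apply Cauchy--Schwarz to pass from the weighted $L^1$ norm on a bounded set to a weighted $L^2$ norm, and bound the latter by $\|F\|_{\AH_2^s}$ via \eqref{eq:na_plancherel} and standard commutator estimates. For \eqref{eq:na_cptF} in the regime $t\leq 1$, the key ingredient is a weighted Plancherel bound of the form $\|(1+t^{-1}|\cdot|_\dist)^\varepsilon k_{F(t\DD)}\|_{L^2(\mu)} \lesssim t^{-(Q+1)/2} \|F\|_{\AH_2^s}$ valid for $s > (Q+1)/2+\varepsilon$: the loss $t^{-(Q+1)/2}$ is exactly absorbed by $\mu(B_\dist(e,4))^{1/2}$ combined with the high-frequency scaling $\lambda^{Q+1}$ of \eqref{eq:na_plancherel}. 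For $t\geq 1$, the effective regime of \eqref{eq:na_plancherel} is instead the low-frequency $\lambda^3$ tail, which both yields the improved smoothness threshold $s>0$ and kills any negative power of $t$. The gradient estimate \eqref{eq:na_cptFgrad} is proved analogously, with the extra factor $t^{-1}$ coming from the rescaling $\nabla_H k_{F(t\DD)} = t^{-1} k_{\tilde F(t\DD)}$ for a suitably modified symbol $\tilde F$ and from boundedness of horizontal Riesz-type transforms on the relevant weighted $L^2$ spaces, both of which are already present in \cite{MOV}.

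Finally, \eqref{eq:na_cptFdiff} follows from \eqref{eq:na_cptFgrad} with $\varepsilon=0$ by the fundamental theorem of calculus along a horizontal curve $\gamma:[0,L]\to G$ joining $e$ to $y$ with $L\leq 2|y|_\dist\leq 2$: writing $k_{F(t\DD)}(xy)-k_{F(t\DD)}(x) = \int_0^L \langle \nabla_H k_{F(t\DD)}(x\gamma(s)), \gamma'(s)\rangle\di s$ and observing that $|x|_\dist\leq 3$ forces $|x\gamma(s)|_\dist\leq 4$ for all $s$, Fubini reduces the bound to \eqref{eq:na_cptFgrad}, picking up the prefactor $L\lesssim |y|_\dist$. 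The main obstacle is not conceptual but organizational: tracking the precise dependence on $\varepsilon$ and $s$ in the weighted $L^2$ bounds as the regime changes between $t\leq 1$ and $t\geq 1$, and verifying that the $Q=1$ case, where the representation-theoretic machinery of \cite{MOV} largely collapses, can be handled by direct one-dimensional Fourier analysis without loss in the exponents.
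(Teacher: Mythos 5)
Your treatment of \eqref{eq:na_plancherel} (as a rephrasing of \cite[Corollary 4.6]{MOV} adjusted for $Q=1$) and your deduction of \eqref{eq:na_cptFdiff} from \eqref{eq:na_cptFgrad} with $\varepsilon=0$ via integration along a horizontal curve both match the paper. However, the central step of your argument for \eqref{eq:na_cptF} and \eqref{eq:na_cptFgrad} rests on a false premise: you claim that $\supp F\subseteq[-2,2]$ together with finite propagation speed gives $\supp k_{F(t\DD)}\subseteq B_\dist(e,2t)$. Finite propagation speed \eqref{eq:fps} controls the support of the kernel in terms of the support of $\hat F$, not of $F$; here the hypothesis is compact support of the multiplier $F$ itself (spectral localization), so $k_{F(t\DD)}$ has no compact support in general. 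This is precisely why the integrals in the statement are restricted to $|x|_\dist\leq 4$, and why a smoothness threshold $s>(Q+1)/2+\varepsilon$ appears at all: under your support claim, a single Cauchy--Schwarz over $B_\dist(e,Ct)$ would yield the unweighted bound with no smoothness on $F$. Relatedly, your bookkeeping does not close: the fixed constant $\mu(B_\dist(e,4))^{1/2}$ cannot absorb the $t$-dependent loss $t^{-(Q+1)/2}$, and the genuinely weighted $L^2$ Plancherel estimate you posit is not available off the shelf in this setting.

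The paper supplies exactly the missing mechanism. One decomposes $F(t\cdot)=\sum_{\ell\geq 0}f_{\ell,t}$ as in \cite[Lemma 5.2]{MOV}, where each piece has $\supp\hat f_{\ell,t}$ at dyadic scale $\sim 2^\ell$ and satisfies $\|k_{f_{\ell,t}(\DD)}\|_{L^2(\mu)}\lesssim 2^{-s\ell}\,(t^{-1})^{[3/2,(Q+1)/2]}\,\|F\|_{\AH_2^s}$ by \eqref{eq:na_plancherel}. Finite propagation speed then legitimately localizes $k_{f_{\ell,t}(\DD)}$ to a ball of radius $\lesssim 2^\ell t$; Cauchy--Schwarz over that ball intersected with $B_\dist(e,4)$ produces the factor $\min\{(2^\ell t)^{(Q+1)/2},1\}$, and the weight is essentially constant, $\lesssim(1+t^{-1}\min\{2^\ell t,1\})^\varepsilon$, on each piece. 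Summing the resulting series in $\ell$ is what generates the two regimes, $s>(Q+1)/2+\varepsilon$ for $t\leq1$ and $s>0$ for $t\geq1$. Without this decomposition in the Fourier-transform variable your argument does not yield \eqref{eq:na_cptF}, and the same gap propagates to \eqref{eq:na_cptFgrad} (whose proof otherwise proceeds as you indicate, replacing \eqref{eq:na_plancherel} by the identity $\||\nabla_H k_{F(\DD)}|\|_{L^2(\mu)}=\|\DD k_{F(\DD)}\|_{L^2(\mu)}$).
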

\begin{proof}
The Plancherel estimate \eqref{eq:na_plancherel} is just a rephrasing of \cite[Corollary 4.6]{MOV}, in a form that is valid in the case $Q=1$ too (cf.\ \cite[Introduction, Theorem 4.2]{Helg}).

To prove \eqref{eq:na_cptF}, note first that
 from finite propagation speed \eqref{eq:fps}, the Cauchy--Schwarz inequality and the fact that $Q+1$ is the local dimension associated to the sub-Riemannian structure, one immediately obtains that, for all $r \in \Rpos$ and all even $f : \RR \to \CC$ with $\supp \hat f \subseteq [-r,r]$,
\begin{equation}\label{eq:na_l1l2}
\int_{|x|_\dist \leq 4} |k_{f(\DD)}(x)| \di\mu(x) \lesssim \min\{r^{(Q+1)/2},1\} \, \|k_{f(\DD)}\|_{L^2(\mu)}.
\end{equation}
Take now an even $F : \RR \to \CC$ with $\supp F \subseteq [-2,2]$, and decompose $F(t \cdot) = \sum_{\ell=0}^\infty f_{\ell,t}$ as in \cite[Lemma 5.2]{MOV}. Similarly as in the proof of \cite[Proposition 5.3]{MOV}, but using \eqref{eq:na_l1l2} in place of \cite[Proposition 5.1]{MOV}, one then obtains
\[\begin{split}
&\int_{|x|_\dist \leq 4} |k_{f_{\ell,t}(\DD)}(x)| \, (1+t^{-1} |x|_\dist)^\varepsilon \di\mu(x) \\
&\lesssim (1+t^{-1}\min\{2^\ell t, 1\})^\varepsilon \min\{(2^\ell t)^{(Q+1)/2},1\} \| k_{f_{\ell,t(\DD)}} \|_{L^2(\mu)} \\
&\lesssim (1+t^{-1}\min\{2^\ell t, 1\})^\varepsilon \min\{(2^\ell t)^{(Q+1)/2},1\} \, 2^{-s\ell} \, (t^{-1})^{[3/2,(Q+1)/2]} \, \|F\|_{\AH_2^s}.
\end{split}\]
Hence, if $t \geq 1$, then
\[
\int_{|x|_\dist \leq 4} |k_{F(t\DD)}(x)| \, (1+t^{-1} |x|_\dist)^\varepsilon\di\mu(x) \lesssim (1+t^{-1})^\varepsilon \, t^{-3/2} \, \|F\|_{\AH_2^s} \sum_{\ell=0}^\infty 2^{-s\ell} \lesssim  \|F\|_{\AH_2^s}
\]
whenever $s > 0$. If instead $t \leq 1$, then
\[\begin{split}
&\int_{|x|_\dist \leq 4} |k_{F(t\DD)}(x)| \, (1+t^{-1} |x|_\dist)^\varepsilon\di\mu(x) \\
&\lesssim  \|F\|_{\AH_2^s} \left( t^{-(Q+1)/2-\varepsilon} \sum_{\ell \tc 2^\ell \geq t^{-1}} 2^{-s\ell}  +  \sum_{\ell \tc 2^\ell < t^{-1}} 2^{(\varepsilon+(Q+1)/2-s)\ell} \right) \lesssim \|F\|_{\AH_2^s}
\end{split}\]
whenever $s > (Q+1)/2+\varepsilon$.

As for \eqref{eq:na_cptFgrad}, observe that the following analogue of \eqref{eq:na_l1l2} holds for all $r \in \Rpos$ and all even $f : \RR \to \CC$ with $\supp \hat f \subseteq [-r,r]$:
\begin{equation}\label{eq:na_l1l2grad}
\int_{|x|_\dist \leq 4} |\nabla_H k_{f(\DD)}(x)| \di\mu(x) \lesssim \min\{r^{(Q+1)/2},1\} \, \| |\nabla_H k_{f(\DD)}| \|_{L^2(\mu)};
\end{equation}
moreover, by \eqref{eq:na_plancherel}, for all even $F : \RR \to \CC$,
\begin{equation}\label{eq:na_plancherelgrad}
\|| \nabla_H k_{F(\DD)}| \|_{L^2(\mu)}^2 = \|\DD k_{F(\DD)}\|_{L^2(\mu)}^2 \sim \int_0^\infty |F(\lambda)|^2 \, \lambda^{[5,Q+3]} \frac{\ndi\lambda}{\lambda}.
\end{equation}
By repeating the above argument for \eqref{eq:na_cptF}, but using \eqref{eq:na_l1l2grad} and \eqref{eq:na_plancherelgrad} in place of \eqref{eq:na_l1l2} and \eqref{eq:na_plancherel}, one easily obtains \eqref{eq:na_cptFgrad}.

Finally, \eqref{eq:na_cptFdiff} is an immediate consequence of \eqref{eq:na_cptFgrad} in the case $\varepsilon = 0$ (cf. \cite[Lemma 5.4]{MOV} and the proof of \cite[Lemma VIII.1.1]{varopoulos_analysis_1992}).
\end{proof}

\begin{lem}\label{lem:na_MH}
Let $F: \RR \to \CC$ be even and such that $\supp \hat F \subseteq [-2,2]$, and let $s> \frac{Q+1}{2}$.
Then the following inequalities hold:
\begin{enumerate}[label=(\roman*)]
\item\label{en:na_MH_diff} $\sup_{y\in B_\dist(e,1)} \int_{|x|_\dist \geq 2 |y|_\dist} |k_{F(\DD)}(xy)-k_{F(\DD)}(x)| \di\mu(x) \leq C_{s} \| F \|_{L^2_{s,\sloc}}$;
\item\label{en:na_MH_annulus} $\sup_{0<r\leq 1} \int_{r \leq |x|_\dist < 2r} |k_{F(\DD)}(x)| \di\mu(x) \leq C_{s} \| F \|_{L^2_{s,\sloc}}$.
\end{enumerate} 
In addition, for all $r \in (0,\infty)$ and all even $F : \RR \to \CC$ such that $\hat F \subseteq [-r,r]$, the estimate
\begin{enumerate}[label=(\roman*),resume]
\item\label{en:na_fps_plancherel} $\|\chi_\alpha^{1/2} k_{F(\DD)}\|_1 \leq C_\alpha (1+r)^\thresh \, e^{r |\alpha|/2} (\int_0^\infty |F(\lambda)|^2 \lambda^{[3,Q+1]} \frac{\ndi\lambda}{\lambda})^{1/2}$
\end{enumerate}
holds with $\thresh = 1$, and actually one can take $\thresh = 1/2$ when $\alpha= -Q/2$, and $\thresh=0$ when $\alpha < -Q/2$.
\end{lem}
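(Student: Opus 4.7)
The strategy for parts (i) and (ii) is to reduce to the scale-indexed estimates of Lemma \ref{lem:na_variation} through a dyadic decomposition of $F$. Let $\psi_e$ denote the even extension of $\psi$ from \eqref{eq:sumpsi}, and set $G_j(\lambda) = F(\lambda)\psi_e(2^{-j}\lambda)$ and $F_j(\lambda) = F(2^j\lambda)\psi_e(\lambda)$, so that $F = \sum_{j\in\ZZ} G_j$, $G_j(\DD) = F_j(2^{-j}\DD)$, $\supp F_j \subseteq \supp\psi_e$, and $\|F_j\|_{\AH_2^s} \lesssim \|F\|_{L^2_{s,\sloc}}$ uniformly in $j$. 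Lemma \ref{lem:na_variation} applied to each $F_j$ at scale $t = 2^{-j}$ then reduces the problem to a dyadic summation.

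For part (i), at each scale $j$ the proof balances two competing bounds. The cancellation estimate \eqref{eq:na_cptFdiff} with $t = 2^{-j}$ contributes $\lesssim 2^j|y|_\dist \|F\|_{L^2_{s,\sloc}}$, with $s > (Q+1)/2$ needed for $j \geq 0$ and $s > 0$ sufficing for $j < 0$. Against this, restricting \eqref{eq:na_cptF} with $\varepsilon > 0$ to $\{|x|_\dist \geq 2|y|_\dist\}$ yields a trivial off-diagonal bound $\lesssim (2^j|y|_\dist)^{-\varepsilon}\|F\|_{L^2_{s,\sloc}}$. Using the cancellation bound when $2^j|y|_\dist \leq 1$ and the off-diagonal bound otherwise, both sums are geometric and yield a uniform estimate in $y \in B_\dist(e, 1)$. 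Part (ii) proceeds along the same lines: on the shell $\{r \leq |x|_\dist < 2r\}$, scales with $2^j r \leq 1$ are handled via Cauchy--Schwarz against the local volume combined with the Plancherel identity \eqref{eq:na_plancherel}, giving $(2^j r)^{(Q+1)/2}\|F\|_{L^2_{s,\sloc}}$, while scales with $2^j r > 1$ use the off-diagonal decay from \eqref{eq:na_cptF} with $\varepsilon > 0$ to produce $(2^j r)^{-\varepsilon}\|F\|_{L^2_{s,\sloc}}$; the resulting dyadic sum converges uniformly in $r \in (0, 1]$.

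For part (iii), finite propagation speed \eqref{eq:fps} gives $\supp k_{F(\DD)} \subseteq B_\dist(e, r)$, and Cauchy--Schwarz together with \eqref{eq:na_plancherel} reduces matters to estimating the ball integral
\[
J_\alpha(r) := \int_{B_\dist(e, r)} e^{\alpha u} \di z \di u \lesssim (1+r)^{2\thresh} e^{r|\alpha|},
\]
for the stated values of $\thresh$. This is done by slicing in the $A$-direction and exploiting the explicit sub-Riemannian distance formula \cite[Proposition 2.7]{MOV}: the $N$-slice measure $\mu_N(\{z : (z, u) \in B_\dist(e, r)\})$ behaves like $e^{Q(r+u)/2}$ in the bulk of the ball and degenerates as $u$ approaches the endpoints $\pm r$. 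Integrating $e^{\alpha u}$ against this slice measure produces the three regimes: for $\alpha < -Q/2$, the character decay dominates and the integrand concentrates near $u = -r$, giving a pure exponential $e^{r|\alpha|}$ ($\thresh = 0$); for $\alpha = -Q/2$, the cancellation between the bulk slice growth and the character is exact and a logarithmic factor in $r$ emerges ($\thresh = 1/2$); in the remaining range the bound $(1+r)^2 e^{r|\alpha|}$ holds ($\thresh = 1$). Finally, the unimodular-type identity $|k_{F(\DD)}(x^{-1})| = |k_{F(\DD)}(x)| m(x)^{-1}$ for $F$ real and even yields the symmetry $\|\chi_\alpha^{1/2} k_{F(\DD)}\|_{L^1(\mu)} = \|\chi_{-\alpha}^{1/2} k_{F(\DD)}\|_{L^1(\mu)}$, so that the improvements recorded for $\alpha \leq -Q/2$ transfer to $\alpha \geq Q/2$ by reflection.

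The main obstacle is the accurate bookkeeping of the ball integral $J_\alpha(r)$ across the different $|\alpha|$-regimes, and in particular the extraction of the sharp polynomial prefactor from the interplay between the exponential weight $e^{\alpha u}$ and the slice geometry near the top of the ball, where the naive bulk estimate overshoots. The dyadic summation in parts (i) and (ii), combining cancellation at small scales with off-diagonal decay at large scales, is delicate but follows a standard pattern.
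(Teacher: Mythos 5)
Parts \ref{en:na_MH_diff} and \ref{en:na_MH_annulus} of your argument follow essentially the same route as the paper: the dyadic decomposition $F=\sum_j F_j(2^{-j}\cdot)\psi$, the cancellation estimate \eqref{eq:na_cptFdiff} for scales with $2^j|y|_\dist\leq 1$ against the weighted off-diagonal bound \eqref{eq:na_cptF} for the remaining scales, and for \ref{en:na_MH_annulus} the Cauchy--Schwarz/Plancherel bound at small scales. (One small imprecision: for $2^j<1$ the Plancherel weight is $\lambda^3$ rather than $\lambda^{Q+1}$, so the small-scale contribution is $r^{(Q+1)/2}(2^j)^{3/2}$ rather than $(2^jr)^{(Q+1)/2}$; this is still $\lesssim (2^jr)^{\min\{3/2,(Q+1)/2\}}$ and the geometric summation survives, so nothing breaks.)

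Part \ref{en:na_fps_plancherel}, however, has a genuine gap in the general case $\thresh=1$. Your plan is plain Cauchy--Schwarz against the ball integral $J_\alpha(r)=\int_{B_\dist(e,r)}\chi_\alpha\di\mu$, and you claim $J_\alpha(r)\lesssim(1+r)^{2}e^{|\alpha|r}$ for all $\alpha$. This is false whenever $\alpha>-Q/2$: using the slice measure $\sim e^{Q(r+u)/2}$ that you yourself record, one gets
\[
J_\alpha(r)\sim e^{Qr/2}\int_{-r}^{r}e^{(\alpha+Q/2)u}\di u\sim e^{(\alpha+Q)r},
\]
and $\alpha+Q>|\alpha|$ exactly when $\alpha>-Q/2$, so $J_\alpha(r)$ exceeds $e^{|\alpha|r}$ by an \emph{exponential} factor $e^{(2\alpha+Q)r}$, not a polynomial one. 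Your reflection identity $\|\chi_\alpha^{1/2}k\|_{L^1(\mu)}=\|\chi_{-\alpha}^{1/2}k\|_{L^1(\mu)}$ is correct and transfers the improved bounds from $\alpha\leq -Q/2$ to $\alpha\geq Q/2$, but it leaves the whole range $0<|\alpha|<Q/2$ uncovered, and there the unweighted Cauchy--Schwarz approach cannot work. The paper's proof avoids this by inserting the weight $(1+w)^{-1}$ from \cite[Proposition 5.1]{MOV} into the Cauchy--Schwarz step: the weighted ball integral $\int_{B_\dist(e,r)}(1+w)^{-1}\chi_\alpha\di\mu$ really is $\lesssim(1+r)e^{|\alpha|r}$ for every $\alpha$ (because $\int_0^T s^{Q/2-1}(1+s^{Q/2})^{-1}\di s$ grows only logarithmically in $T$, killing the bulk volume growth $e^{Q(r+u)/2}$), and the price is the weighted Plancherel-type bound $\|w^{1/2}k_{F(\DD)}\|_{L^2(\mu)}\lesssim(1+r)^{1/2}\|k_{F(\DD)}\|_{L^2(\mu)}$, also from \cite{MOV}. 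Without some such weighted estimate the claim $\thresh=1$ for general $\alpha$ is not reachable by your method.
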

\begin{proof}
We first prove parts \ref{en:na_MH_diff} and \ref{en:na_MH_annulus}. Let $\psi$ be defined as in \eqref{eq:sumpsi}. Choose $\varepsilon \in(0,1)$ such that
$s>\frac{Q+1}{2}+\varepsilon$. Arguing as in the proof of \cite[Theorem 1.1]{MOV} define
\[
F_j(\lambda)=F(2^j\lambda)\,\psi(\lambda)\qquad\forall j\in\ZZ \quad \forall\lambda\in\Rpos.
\]
Then
\[
F(\DD)=\sum_{j\in\ZZ} F_j(2^{-j}\DD)\,,
\]
in the sense of strong convergence of operators on $L^2(\mu)$.

Let $k_j = k_{F_j(2^{-j}\DD)}$.
Hence, from \eqref{eq:na_cptF} and \eqref{eq:na_cptFdiff}
it follows that, for all $j \in \ZZ$ and $y \in B_\dist(e,1)$,
\begin{equation}\label{eq:stimaL1pesata}
\int_{|x|_\dist \leq 4} |k_j(x)|\,(1+2^{j}|x|_\dist)^{\varepsilon} \di\mu(x) \lesssim  \| F \|_{L^2_{s,\sloc}},
\end{equation}
and
\begin{equation}\label{eq:stimadifferenza}
\int_{|x|_\dist \leq 3} |k_j(xy)-k_j(x)| \di\mu(x)
\lesssim 
2^{j}|y|_\dist \| F \|_{L^2_{s,\sloc}}.
\end{equation}
Take any $y \in B_\dist(e,1)$ and choose $J$ as the smallest integer such that $2^{J}|y|_\dist>1$. Then, by \eqref{eq:stimaL1pesata}, 
\begin{equation}\label{eq:j>J}
\begin{split}
\sum_{j> J}\int_{3 \geq |x|_\dist \geq 2|y|_\dist}|k_{j}(xy)-k_{j}(x)| \di\mu(x)
&\leq 2 \sum_{j> J}\int_{4 \geq |x|_\dist \geq |y|_\dist}|k_{j}(x)| \di\mu(x) \\
&\lesssim \sum_{j > J} \big(1+2^{j} |y|_\dist\big)^{-\varepsilon} \| F \|_{L^2_{s,\sloc}}\\
&\lesssim \| F \|_{L^2_{s,\sloc}} .
\end{split}
\end{equation}
Moreover, by \eqref{eq:stimadifferenza},
\begin{equation}\label{eq:j<J}
\begin{split}
\sum_{j\leq J}\int_{3 \geq |x|_\dist \geq 2|y|_\dist} |k_{j}(xy)-k_{j}(x)| \di \mu (x)
&\lesssim  \sum_{j\leq J} 2^{j} |y|_\dist \, \| F \|_{L^2_{s_0,s_{\infty},\sloc}}\\
&\lesssim 2^{J}  |y|_\dist \, \| F \|_{L^2_{s,\sloc}}\\
&\lesssim \| F \|_{L^2_{s,\sloc}}.
\end{split}
\end{equation}
Note also that $\supp k_{F(\DD)} \subseteq B_\dist(e,2)$ by finite propagation speed \eqref{eq:fps} and the condition $\supp F \subseteq [-2,2]$, so the integral in the left-hand side of \ref{en:na_MH_diff} is actually restricted to $|x|_\dist \leq 3$. Hence the estimate \ref{en:na_MH_diff} follows by \eqref{eq:j>J} and \eqref{eq:j<J}. 

We now prove \ref{en:na_MH_annulus}. Fix $r \in (0,1]$.
If we choose $J$ as the smallest integer such that $2^{J}r >1$, then \eqref{eq:stimaL1pesata} implies that 
\begin{equation}\label{eq:j>I}
\sum_{j\geq J}\int_{r\leq|x|_\dist<2r}|k_j(x)| \di\mu(x)\lesssim  r^{-\varepsilon} \sum_{j\geq J} 2^{-\varepsilon j} \| F \|_{L^2_{s,\sloc}} \lesssim \| F \|_{L^2_{s,\sloc}} .
\end{equation}
Take now $j<J$. 
Then, by the Cauchy--Schwarz inequality, \cite[eq.\ (2.11)]{MOV} and the Plancherel estimate \eqref{eq:na_plancherel},
\[\begin{split}
\int_{r\leq|x|_\dist<2r}|k_j(x)| \di\mu(x) &\lesssim r^{(Q+1)/2} \| k_j\|_2 \\
&\lesssim r^{(Q+1)/2} (2^{j})^{[3/2,(Q+1)/2]} \|F_j\|_2 \\
&\lesssim  (r 2^{j})^{\delta} \, \| F_j \|_{\AH_2^s},
\end{split}\]
where $\delta = \min\{3/2,(Q+1)/2\} > 0$. Therefore
\begin{equation}\label{eq:j<Ismall}
\sum_{j< J}\int_{r\leq|x|_\dist<2r}|k_j(x)| \di\mu(x) \lesssim \sum_{j\leq J} (r 2^{j})^\delta \| F \|_{L^2_{s,\sloc}}  \lesssim \| F \|_{L^2_{s,\sloc}} .
\end{equation}
The estimate \ref{en:na_MH_annulus} follows from \eqref{eq:j>I} and \eqref{eq:j<Ismall}.

As for part \ref{en:na_fps_plancherel}, we follow the proof of \cite[Proposition 5.1]{MOV}: if $w : G \to \Rnon$ is the weight defined there, then, by finite propagation speed and H\"older's inequality,
\[
\|\chi_\alpha^{1/2} k_{F(\DD)}\|_1 \leq \left(\int_{B_\dist(e,r)} (1+w)^{-1} \chi_\alpha \di\mu \right)^{1/2} \left( \|k_{F(\DD)}\|_2 + \|w^{1/2} k_{F(\DD)}\|_2 \right).
\]
The second factor is estimated as in \cite[proof of Proposition 5.1]{MOV}:
\[
\|k_{F(\DD)}\|_2 + \|w^{1/2} k_{F(\DD)}\|_2 
\lesssim (1+r)^{1/2} \|k_{F(\DD)}\|_2.
\]
As for the first factor, following the proof of \cite[eq.\ (2.13)]{MOV},
\[\begin{split}
\int_{B_\dist(e,r)} (1+w)^{-1} \chi_\alpha \di\mu &
\sim \int_{-r}^r e^{\alpha u} \int_0^{e^u (\cosh r - \cosh u)}  \frac{s^{Q/2-1}}{1+s^{Q/2}} \di s \di u \\
&\lesssim (1+r) \int_0^r e^{|\alpha| u} \di u \lesssim (1+r) \, e^{|\alpha| r}.
\end{split}\]
Hence, by combining the two estimates,
\[
\|\chi_\alpha^{1/2} k_{F(\DD)}\|_1 \lesssim (1+r) \, e^{r|\alpha|/2} \| k_{F(\DD)} \|_2,
\]
and part \ref{en:na_fps_plancherel} with $\thresh = 1$ follows by \eqref{eq:na_plancherel}.

In order to improve the estimate in the case $\alpha \leq -Q/2$, we apply H\"older's inequality to obtain
\[
\|\chi_\alpha^{1/2} k_{F(\DD)}\|_1 \leq \left(\int_{B_\dist(e,r)} \chi_\alpha \di \mu \right)^{1/2} \|k_{F(\DD)}\|_2,
\]
and observe that, since $\alpha \leq -Q/2$,
\[\begin{split}
\int_{B_\dist(e,r)} \chi_\alpha \di \mu &
\sim \int_{-r}^r e^{\alpha u} \int_0^{e^u (\cosh r - \cosh u)} s^{Q/2-1} \di s \di u \\
&\sim \int_{-r}^r e^{(\alpha+Q/2) u} (\cosh r - \cosh u)^{Q/2} \di u \\
&\lesssim e^{r Q/2} \int_0^r e^{-(\alpha+Q/2) u}  \di u \sim \begin{cases}
e^{|\alpha| r} & \text{if $\alpha < -Q/2$,} \\
r e^{|\alpha| r} & \text{if $\alpha = -Q/2$.}
\end{cases}
\end{split}\]
Hence the improved estimate \ref{en:na_fps_plancherel} follows, as before, by \cite[Corollary 4.6]{MOV}.
\end{proof}

The previous lemma, together with \eqref{eq:L2comp} and Remark \ref{rem:CZ},
shows that Assumptions \ref{en:ass_MH_diff}, \ref{en:ass_MH_annulus}, \ref{en:ass_fps_plancherel} are satisfied for $W = b_X = |\alpha|/2$, $\lebexp = 2$, $\sigma > (Q+1)/2$, $\plnexp=Q/2$, and $\thresh = 1$ (and in fact one can take $\thresh = 1/2$ if $\alpha = -Q/2$ and $\thresh = 0$ for $\alpha < -Q/2$).
Hence from Corollary \ref{cor:main} we deduce the following multiplier theorem. 

\begin{teo}\label{teo:main_na}
Suppose that the group $G=N \rtimes A$ and the sub-Laplacian $\Delta = -\sum_{j=0}^\nu X_j^2$ are constructed as above, and let $Q$ be the homogeneous dimension of $N$.
Let $p\in [1,\infty]\setminus \{2\}$, $\alpha\in \mathbb R\setminus\{0\}$, $\chi=\chi_{\alpha}$ and $X=\alpha X_0$. Let $W_{X,p}$ be defined as in \eqref{eq:strip_width}. Suppose that $M\in H^{\infty}(\Sigma_{W_{X,p}};s)$ for some $s \in \NN$,
\begin{equation}\label{eq:smoothness_na}
s > |1/p-1/2| \max\{ Q+1, 3+\sgn(\alpha+Q/2) \}.
\end{equation}
Then the following hold.
\begin{enumerate}[label=(\roman*)]
\item\label{en:main_na_lp} If $p\in (1,\infty)$, then $M(\DD_X)$ extends to a bounded operator on $L^p(\mu_{X})$.
\item\label{en:main_na_h1} If $p=1$, then $M(\DD_X)$ extends to a bounded operator from $\hardy(\mu_{X})$ to $L^1(\mu_{X})$.
\item\label{en:main_na_bmo} If $p=\infty$, then $M(\DD_X)$ extends to a bounded operator from $L^{\infty}(\mu_{X})$ to $\bmo(\mu_{X})$. 
\end{enumerate}
\end{teo}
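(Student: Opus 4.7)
The plan is to derive Theorem \ref{teo:main_na} from Corollary \ref{cor:main} by verifying its Assumptions \ref{en:ass_MH_diff}, \ref{en:ass_MH_annulus} and \ref{en:ass_fps_plancherel} for the sub-Laplacian $\Delta$ on $G = N \rtimes A$, in direct parallel with the argument leading to Theorem \ref{teo:main-pol} in the polynomial-growth case. The substantive kernel analysis has already been carried out in Lemma \ref{lem:na_MH}; my task is to assemble those estimates with the parameter choices $\lebexp = 2$, $W = |\alpha|/2 = b_X$, $\plnexp = Q/2$, $\sigma$ any number above $(Q+1)/2$, and $\thresh \in \{0,1/2,1\}$ according to whether $\alpha + Q/2$ is negative, zero, or positive, and then to match the resulting smoothness exponent with \eqref{eq:smoothness_na}.

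Assumption \ref{en:ass_MH_diff} is immediate from Lemma \ref{lem:na_MH}(i) combined with the embedding \eqref{eq:L2comp} (which requires $\sigma > 1/2$). For Assumption \ref{en:ass_MH_annulus} I would invoke Remark \ref{rem:CZ}(B$'$): Lemma \ref{lem:na_MH}(ii) provides exactly the uniform bound over dyadic annuli of radius $R \in (0,1]$, and the decomposition $\{|x|_\dist \geq r\} = \bigcup_{j \geq 0}\{2^j r \leq |x|_\dist < 2^{j+1} r\}$ --- truncated at $R \leq 2$ thanks to $\supp k_{F(\DD)} \subseteq B_\dist(e,2)$ from finite propagation speed --- produces at most $O(\log(1/r)+1)$ terms, with the prefactor $r$ absorbing $\log(1/r)$ uniformly on $(0,1]$. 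For Assumption \ref{en:ass_fps_plancherel}, an even $F$ with $\Rnon \cap \supp \hat F \subseteq [h-2,h]$ satisfies $\supp \hat F \subseteq [-h,h]$, so \eqref{eq:fps} gives $\supp k_{F(\DD)} \subseteq B_\dist(e,h)$ and Lemma \ref{lem:na_MH}(iii) applies with $r = h$. Since $\lambda^{[3,Q+1]}/\lambda \lesssim (1+\lambda)^Q$ on $(0,\infty)$ and $F$ is even, the Plancherel-type integral there is dominated by $\|(1+|\cdot|)^{Q/2} F\|_{L^2(\RR)}^2$, which yields precisely the bound required by Assumption \ref{en:ass_fps_plancherel} with the stated parameters and the three possible values of $\thresh$.

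Corollary \ref{cor:main} then gives the conclusion provided
\[
N > 2\,|1/p-1/2|\,\max\{\sigma,\,\thresh+1,\,(Q+1)/2\}.
\]
Letting $\sigma$ shrink to $(Q+1)/2$, this reduces to $N > |1/p-1/2|\max\{Q+1,\,2\thresh+2\}$, and a case-by-case check shows $2\thresh + 2 = 3 + \sgn(\alpha+Q/2)$, matching \eqref{eq:smoothness_na} exactly. The strip half-width $W_p = 2|1/p-1/2|\,W = |1/p-1/2|\,|\alpha| = W_{X,p}$ from \eqref{eq:strip_width} also matches. Since Lemma \ref{lem:na_MH} already contains all the delicate analysis, I anticipate no serious obstacle: the only place where something could conceivably slip is the alignment of the three regimes of $\thresh$ with the $\sgn(\alpha + Q/2)$ term in \eqref{eq:smoothness_na}, and as shown above they coincide on the nose.
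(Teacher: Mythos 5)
Your proposal is correct and follows essentially the same route as the paper: the paper likewise deduces Theorem \ref{teo:main_na} by noting that Lemma \ref{lem:na_MH}, together with \eqref{eq:L2comp} and Remark \ref{rem:CZ}, verifies Assumptions \ref{en:ass_MH_diff}, \ref{en:ass_MH_annulus}, \ref{en:ass_fps_plancherel} with $W=b_X=|\alpha|/2$, $\lebexp=2$, $\sigma>(Q+1)/2$, $\plnexp=Q/2$ and $\thresh\in\{0,1/2,1\}$, and then invokes Corollary \ref{cor:main}. Your bookkeeping of the exponents (in particular $2\thresh+2=3+\sgn(\alpha+Q/2)$ and $W_p=W_{X,p}$) matches the paper exactly.
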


Actually, by Theorem \ref{teo:main}, the pointwise condition $M\in H^{\infty}(\Sigma_{W_{X,p}};s)$ in Theorem \ref{teo:main_na} can be replaced by the weaker $L^q$-type condition $M \in \AcH^{s,-1/q,W_{X,p}}_{q,\infty}$, where $1/q = |1/2-1/p|$, and in that case the order of smoothness $s$ need not be an integer.

Note that in many cases (e.g., when $Q > 2$ or $\alpha < -Q/2$) the condition \eqref{eq:smoothness_na} simply reduces to
\[
s > |1/p-1/2| \, (Q+1).
\]
This includes the case where $\alpha=-Q$: in this case, $\chi_\alpha=m$, so $\mu_X$ is a left Haar measure on $G$ and $\Delta_X$ is the ``intrinsic hypoelliptic Laplacian'' on $G$ \cite{ABGR}. 
If moreover $N$ is abelian, then $G$ and $\Delta_X$ can be identified with a rank-one Riemannian symmetric space and its Laplace--Beltrami operator, and $Q+1$ is the topological dimension of $G$. Hence, in this particular case, Theorem \ref{teo:main_na}\ref{en:main_na_lp} reduces to \cite[Theorem 1]{anker}.

\acknowledgments

\bigskip

\end{document}